\newtheorem{theorem}{Theorem}
\newtheorem{remark}{Remark}
\newtheorem{lemma}{Lemma}
\newtheorem{corollary}{Corollary}
\newcommand\inputpgf[2]{{
\let\pgfimageWithoutPath\pgfimage
\renewcommand{\pgfimage}[2][]{\pgfimageWithoutPath[##1]{#1/##2}}
\input{#1/#2}
}}
\DeclareMathOperator{\tr}{Tr}
\pgfplotsset{compat=newest}
\newlength\fwidth
\newlength\height
\newlength\width
\DeclareMathOperator{\Diff}{D}
\DeclareMathOperator*{\Id}{Id}
\newcommand*{\realSpace}{\ensuremath{\mathbb{R}}}
\newcommand*{\complexSpace}{\ensuremath{\mathbb{C}}}
\newcommand*{\nfeatures}{\ensuremath{p}}
\newcommand*{\dimension}{\ensuremath{d}}
\newcommand*{\manifold}{\ensuremath{\mathcal{M}}}
\newcommand*{\tangentSpace}[1]{\ensuremath{T_{#1}\mathcal{M}}}
\newcommand*{\point}{\ensuremath{\boldsymbol{\theta}}}
\newcommand*{\pointAlt}{\ensuremath{\boldsymbol{\hat{\theta}}}}
\newcommand*{\tangent}{\ensuremath{\boldsymbol{X}_{\point}}}
\newcommand*{\tangentBis}{\ensuremath{\boldsymbol{Y}_{\point}}}
\newcommand*{\tangentTer}{\ensuremath{\boldsymbol{Z}_{\point}}}
\newcommand*{\tangentAlt}{\ensuremath{\boldsymbol{X}_{\pointAlt}}}
\newcommand*{\tangentAltBis}{\ensuremath{\boldsymbol{Y}_{\pointAlt}}}
\newcommand*{\metric}[3]{\ensuremath{\langle#2,#3\rangle_{#1}}}
\newcommand*{\LC}[2]{\ensuremath{\nabla_{#2}#1}}
\newcommand*{\basisElement}{\ensuremath{\boldsymbol{\Omega}}}
\newcommand*{\data}{\ensuremath{\mathbf{y}}}
\newcommand*{\estimator}{\ensuremath{\boldsymbol{\hat{\theta}}}}
\newcommand*{\errVec}{\ensuremath{\boldsymbol{\varepsilon}_{\point}}}
\newcommand*{\errCov}{\ensuremath{\boldsymbol{C}_{\point}}}
\newcommand*{\scoreVec}{\ensuremath{\boldsymbol{s}_{\point}}}
\newcommand*{\errCovBayes}{\ensuremath{\boldsymbol{C}}}
\newcommand*{\FisherMat}{\ensuremath{\boldsymbol{F}}}
\newcommand*{\eye}{\ensuremath{\boldsymbol{I}}}
\definecolor{TolDarkPurple}{HTML}{332288}
\definecolor{TolDarkBlue}{HTML}{6699CC}
\definecolor{TolLightBlue}{HTML}{88CCEE}
\definecolor{TolLightGreen}{HTML}{44AA99}
\definecolor{TolDarkGreen}{HTML}{117733}
\definecolor{TolDarkBrown}{HTML}{999933}
\definecolor{TolLightBrown}{HTML}{DDCC77}
\definecolor{TolDarkRed}{HTML}{661100}
\definecolor{TolLightRed}{HTML}{CC6677}
\definecolor{TolLightPink}{HTML}{AA4466}
\definecolor{TolDarkPink}{HTML}{882255}
\definecolor{TolLightPurple}{HTML}{AA4499}
\definecolor{mDarkBrown}{HTML}{604c38}
\definecolor{mDarkTeal}{HTML}{23373b}
\definecolor{mLightBrown}{HTML}{EB811B}
\definecolor{mLightGreen}{HTML}{14B03D}
\colorlet{mLightTeal}{mDarkTeal!75}
\colorlet{mAltBrown}{mLightBrown!80!mDarkBrown!90!red}
\colorlet{mLightBlue}{TolDarkBlue!70!mDarkTeal}
\colorlet{mLightYellow}{TolLightBrown!70!mLightBrown}
\colorlet{mDarkOrange}{mLightBrown!40!red!75!mDarkTeal}
\definecolor{myblue}{HTML}{1f77b4}
\definecolor{myorange}{HTML}{ff7f0e}
\definecolor{myred}{HTML}{d62728}
\colorlet{mCiteColor}{mLightBlue}
\newtheorem{assumption}{Assumption}
\begin{document}


\title{Intrinsic Bayesian Cramér-Rao Bound with an Application to Covariance Matrix Estimation}
\author{Florent Bouchard, Alexandre Renaux, Guillaume Ginolhac, Arnaud Breloy
\thanks{F. Bouchard and A. Renaux are with CNRS, L2S, CentraleSupélec, Université Paris-Saclay.
G. Ginolhac is with LISTIC (EA3703), University Savoie Mont Blanc.
A. Breloy is with CEDRIC, CNAM.}}


\maketitle

\begin{abstract}
This paper presents a new performance bound for estimation problems where the parameter to estimate lies in a Riemannian manifold (a smooth manifold endowed with a Riemannian metric) and follows a given prior distribution.
In this setup, the chosen Riemannian metric induces a geometry for the parameter manifold, as well as an intrinsic notion of the estimation error measure.
Performance bounds for such error measure were previously obtained in the non-Bayesian case (when the unknown parameter is assumed to deterministic), and referred to as \textit{intrinsic} Cramér-Rao bound.
The presented result then appears either as: \textit{a}) an extension of the intrinsic Cramér-Rao bound to the Bayesian estimation framework; \textit{b}) a generalization of the Van-Trees inequality (Bayesian Cramér-Rao bound) that accounts for the aforementioned geometric structures.
In a second part, we leverage this formalism to study the problem of covariance matrix estimation when the data follow a Gaussian distribution, and whose covariance matrix is drawn from an inverse Wishart distribution. 
Performance bounds for this problem are obtained for both the mean squared error (Euclidean metric) and the natural Riemannian distance for Hermitian positive definite matrices (affine invariant metric).
Numerical simulation illustrate that assessing the error with the affine invariant metric is revealing of interesting properties of the maximum a posteriori and minimum mean square error estimator, which are not observed when using the Euclidean metric. 
\end{abstract}

\begin{IEEEkeywords}
Cramér-Rao bound, Riemannian geometry, Bayesian estimation, covariance matrix estimation
\end{IEEEkeywords}


\section{Introduction}
\label{sec:intro}

Performance bounds are fundamental quantities that allow to characterize the optimal accuracy (generally assessed in terms of mean squared error) that can be achieved for a fixed setup of a given estimation problem.
Such bounds provide useful tools that can be used to evaluate the validity of estimation procedures, or to design systems so that a certain level of accuracy is actually achievable in practice.
When the unknown parameter to be estimated is assumed to be deterministic, the most famous of these is the Cramér-Rao bound \cite{Cramer1946, Rao1992}, and many works addressed its generalization and refinements, e.g., to predict so-called threshold phenomena (sometimes observed at low sample support and/or low SNR when the observation model is non-linear) in estimation performance \cite{Bhattacharyya1946, Barankin1949, Todros2010, Chaumette2008, Routtenberg2014}.
In Bayesian estimation, the unknown parameter is rather assumed to follow a known distribution, that reflects some prior knowledge.
A well-known performance bound for this context is provided by the Van Trees inequality \cite{vantrees2001}, also referred to as the Bayesian Cramér-Rao bound.
As for the deterministic case, many variations, generalizations, and refinements were proposed in the literature \cite{vantrees2007, Weinstein88, Todros2010, Chaumette2017}.

In practice, the parameter to be estimated can often satisfy a set of constraints dictated by the model assumptions (e.g., a fixed normalization). 
If these constraints can be expressed in a system of equations, the so-called constrained Cramér-Rao bound \cite{Gorman1990, stoica1998cramer, marzetta1993simple, ben2009constrained} generalize the standard (i.e. non-Bayesian) Cramér-Rao bound to account for this inherent structure.
It was further studied in \cite{al2018constrained, nitzan2018cram, nitzan2019cramer} (e.g., to account for estimation bias), and extended to other classes of bounds in \cite{ren2015constrained, nitzan2023barankin}.
The extension to the Bayesian context was less studied in the literature. On can cite \cite{andersson2017constrained} which deals with discrete-time nonlinear filtering, or \cite{prevost2020cramer} where random constraints were addressed.
Unfortunately, in many cases, the set of constraints cannot easily be expressed in a system of equations, but actually yields a smooth manifold (e.g., linear subspaces, and covariance matrices).
It becomes then possible to leverage tools from Riemannian geometry, which offers several advantages to derive new performance bounds \cite{Hendriks1991, smith05, Xavier2005, barrau2013note, boumal2013intrinsic, boumal2014cramer, boumal2014thesis, Bonnabel2015, Breloy2019, Solo2020, Labsir2022, Hendriks2023, bouchard2023fisher}.
Notably, this paper will focus on the formalism of intrinsic (non-Bayesian) Cramér-Rao bounds proposed in \cite{smith05} (see also introductions in \cite{barrau2013note, boumal2014thesis, bouchard2023fisher}).
This work proposed a generalization of the (non-Bayesian) Cramér-Rao inequality that is obtained when the error vector is defined from the Riemannian logarithm.
Depending on the choice of the Riemannian metric, this results allows for bounding a geometric distance rather than the usual mean squared error.
As expected, the intrinsic Cramér-Rao bound coincides with the standard Cramér-Rao bound in the Euclidean case (i.e., the parameter lies in $\mathbb{R}^d$ endowed with the Euclidean metric).
However, in the non-Euclidean geometry, it can reveal unexpected properties that are inherent to the estimation problem and its geometry (intrinsic bias, effect of the curvature of the manifold, etc.). 
These properties can provide a better assessment of the estimation performance actually experienced in practice, which motivates further development of this framework.

In this work, we propose to extend some of the results of \cite{smith05} to a Bayesian estimation context.
The contributions of this paper are the following:
\begin{itemize}
    \item[$\bullet$] 
    We derive a general intrinsic counterpart of the Bayesian Cramér-Rao bound, i.e., a performance bound for estimation problems where the parameter to estimate lies in a Riemannian manifold (with any arbitrary chosen Riemannian metric) and follows an assumed prior distribution. 
    To the best of our knowledge, few works addressed this topic: \cite{Jupp2010} derived a scalar inequality based on the Bayesian Cramér–Rao bound for smooth loss functions on manifolds, and \cite{Kumar2018} considered re-obtaining the Bayesian Cramér-Rao bound from the information theory perspective\cite{amari2000methods} i.e. the Riemannian geometry of the parameter space induced by metrics related to the statistical model (notably, the Fisher information metric is derived from the Hessian of the Kullback-Leibler divergence).
    Closer to the formalism considered in this paper (i.e., defining the error vector from the Riemannian logarithm), intrinsic Cramér-Rao bounds \cite{Bonnabel2015}, variants \cite{labsir2024intrinsic}, and intrinsic Bayesian Cramér-Rao bounds \cite{Labsir2022, labsir2023barankin} were proposed for parameter that belong to Lie groups (smooth manifolds with an additional group structure).
    
    \item[$\bullet$]
    We study the problem of covariance matrix estimation when the data is sampled from a Gaussian distribution, and whose covariance matrix is drawn from an inverse Wishart distribution \cite{barnard2000modeling}. 
    The inverse Wishart distribution being the conjugate prior of the multivariate Gaussian distribution, it has been leveraged in numerous references from the statistics \cite{barnard2000modeling, BOURIGA2013795, Fraley2007, Pourahmadi} and signal processing \cite{guerci2006knowledge, besson07, besson2008bounds, Bandiera2010, Bandiera2010Tyler, de2010knowledge, bidon2011knowledge, wang2017cfar} literature.
    Thanks to the obtained intrinsic Bayesian Cramér-Rao bound, performance bounds for this problem are obtained for both the mean squared error (Euclidean metric) and the natural Riemannian distance for Hermitian positive definite matrices (affine invariant metric). Both results are new to the best of our knowledge.  
    As in \cite{smith05}, the study of a proper geometry for the parameter space brings an interesting perspective to this problem.
  In the deterministic case, it was shown that the maximum likelihood estimator is biased and not efficient when investigating the affine invariant metric, while being apparently unbiased and efficient when evaluating error with the Euclidean metric. 
    Conversely in the Bayesian case, we observe that the maximum a posteriori and the minimum mean square error estimators are asymptotically efficient in the intrinsic case, while these appear not to be in the Euclidean one.

\end{itemize}

The rest of the article is organized as follows: 
Section \ref{sec:basic inequalities} contains the background related to Bayesian and intrinsic Cramér-Rao bounds, as well as necessary elements of Riemannian geometry.
Section \ref{sec:natural crb} presents the derivation of the intrinsic Bayesian Cramér-Rao bound.
Section \ref{sec:covariance crb} is dedicated to the analysis of intrinsic Bayesian Cramér-Rao bounds for the aforementioned covariance matrix estimation problem.
Finally, section \ref{sec:num_exp} shows simulations in which the obtained results allows to exhibit interesting properties of the Bayesian estimators.

Notations: Italic type indicates a scalar quantity, lower case boldface indicates a vector quantity, and upper case boldface a matrix. The transpose conjugate operator is $^H$ and the conjugate one is $^*$. $\tr(\cdot)$ and $|\cdot|$ are respectively the trace and the determinant operators. $\mathcal{H}^{++}_{p}$ is the manifold of symmetric positive definite (SPD) matrices of size $p \times p$. 
The notation $\mathbf{A} \succeq \mathbf{B}$ means that $\mathbf{A}-\mathbf{B}$ is positive definite.
A complex-valued random Gaussian vector of mean $\boldsymbol{\nu}$ and covariance matrix $\boldsymbol{\Sigma}$ is denoted $\mathbf{x} \sim \mathcal{CN} (\boldsymbol{\nu}, \boldsymbol{\Sigma})$.
\section{Bayesian and intrinsic Cramér-Rao lower bounds}
\label{sec:basic inequalities}

In this background section, we first recall the classical inequalities allowing to obtain the Cramér-Rao lower bound and its Bayesian counterpart. 
We then present elements of Riemaniann geometry and the corresponding 
intrinsic Cramér-Rao lower bound, which is a generalization of the Cramér-Rao inequality obtained for a parameter lying in a Riemannian manifold \cite{smith05}.

\subsection{Bayesian Cramér-Rao lower bound (Van Trees inequality)}

Let $\data\in\complexSpace^{\nfeatures}$ be an observation vector depending on an unknown parameter vector\footnote{Notice that if the parameter vector has several complex-valued entries, it is always possible to split their estimation problem in terms of real and imaginary parts, so we can assume without loss of generality that $\point$ is a real vector (with properly adjusted dimension $\dimension$).} $\point\in\realSpace^{\dimension}$ through its likelihood function denoted $f(\data|\point)$. 
When the unknown parameter vector $\point$ is assumed to be deterministic, the Cramér-Rao lower bound \cite{Rao1992,Cramer1946} states that any unbiased estimator $\estimator$ satisfies
\begin{equation}
    \mathbb{E}_{\data|\point}[(\estimator-\point)(\estimator-\point)^T] \succeq \FisherMat_{\point}^{-1},
\label{eq:eucl_crlb}
\end{equation}
where
\begin{equation}
    \FisherMat_{\point} = \mathbb{E}_{\data|\point}\left[\frac{\partial \log f(\data|\point)}{\partial \point}\frac{\partial \log f(\data|\point)}{\partial \point^T}\right]
\label{eq:eucl_fisher}
\end{equation}
is the Fisher information matrix (assumed to exists and to be non-singular throughout this paper).
This inequality can be reinterpreted in terms of Euclidean distance between the estimator and the parameter, as
\begin{equation}
    \mathbb{E}_{\data|\point}[d_E^2(\estimator,\point)] \geq \tr(\FisherMat_{\point}^{-1}),
\end{equation}
where $d_E(\estimator,\point)=\|\estimator-\point\|$ is the standard Euclidean distance.

In a Bayesian setting, the parameter vector is assumed to follow a prior distribution denoted $f_{\textup{prior}}(\point)$ and the joint distribution of the couple $\{\data,\point\}$ is denoted $f(\data,\point)=f(\data|\point)f_{\textup{prior}}(\point)$.
The score function is defined as
\begin{equation}
    s(\data,\point) = \frac{\partial \log f(\data,\point)}{\partial \point}
    \label{eq:bayes_score}
\end{equation}
and the corresponding Bayesian Fisher information matrix is defined as
\begin{equation}
    \FisherMat_{\textup{B}} = \mathbb{E}_{\data,\point} \left[s(\data,\point)s(\data,\point)^T\right].
\end{equation}
Since the joint log-likelihood can be expressed $\log f(\data,\point) = \log f(\data | \point) + \log f_{\textup{prior}}(\point)$, we can split the score function~\eqref{eq:bayes_score} into two components.
This results in a decomposition of the Bayesian Fisher information matrix as
\begin{equation}
    \FisherMat_{\textup{B}} = \mathbb{E}_{\point}\left[\FisherMat_{\point}\right]+\FisherMat_{\textup{prior}},
\label{eq:FisherMat_Bayes_Eucl}
\end{equation}
where
\begin{equation}
    \FisherMat_{\textup{prior}} = \mathbb{E}_{\point}\left[\frac{\partial \log f_{\textup{prior}}(\point)}{\partial \point}\frac{\partial \log f_{\textup{prior}}(\point)}{\partial \point^T}\right].
    \label{eq:Fisher_B_classic}
\end{equation}
From these definitions, a Bayesian counterpart of the Cramér-Rao lower bound, referred to as the Van Trees inequality~\cite{vantrees2001,vantrees2007}, is stated in the following theorem.

\begin{theorem}[Van Trees inequality~\cite{vantrees2001,vantrees2007}] 
Let $\estimator$ be an estimator of $\point$, and $\point_i $ (resp. $\estimator_i $) denote the $i^{\rm th}$ element of 
${\point}$ (resp. $\estimator$).
From~\cite{Weinstein88}, if $f(\data,\point)$ is absolutely continuous with respect to $\point$ a.e. $\data$, if $\FisherMat_{\textup{B}}$ in \eqref{eq:Fisher_B_classic} exists and is non-singular, and if the assumption 
\begin{equation}
    \lim_{\point_i\to\pm\infty} \point_i f(\data,\point) = 0
\label{eq:hypo_bayes_crb}
\end{equation}
holds for all element $i\in [\![1,\dimension]\!]$ and for any (a.e.) $\data\in\complexSpace^p$, then
\begin{equation}
    \mathbb{E}_{\data,\point}[(\estimator-\point)(\estimator-\point)^T] \succeq \FisherMat_{\textup{B}}^{-1},
\label{eq:bayescrb}
\end{equation}
which translates in expected Euclidean distance as 
\begin{equation}
    \mathbb{E}_{\data,\point}[d_E(\estimator,\point)^2] \geq \tr\left(\left(\mathbb{E}_{\point}\left[\FisherMat_{\point}\right]+\FisherMat_{\textup{prior}}  \right)^{-1}\right).
\label{eq:tracebayescrb}
\end{equation}
\label{thm:vantreesBCRB}
\end{theorem}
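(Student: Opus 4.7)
The plan is to follow the classical Cauchy--Schwarz route that yields the non-Bayesian Cramér--Rao bound, but now working with the joint density $f(\data,\point)$ in place of the conditional $f(\data|\point)$. The workhorse is the matrix inequality
\[
\mathbb{E}[uu^T] \succeq \mathbb{E}[uv^T]\,\mathbb{E}[vv^T]^{-1}\,\mathbb{E}[vu^T],
\]
valid for any jointly distributed vectors $u,v$ with $\mathbb{E}[vv^T]$ nonsingular (it follows from minimizing the positive semidefinite quantity $\mathbb{E}[(u-Av)(u-Av)^T]$ over the matrix $A$). I will instantiate this with $u=\estimator-\point$ and $v=s(\data,\point)$, so that $\mathbb{E}[vv^T]=\FisherMat_{\textup{B}}$ holds by the definition of the Bayesian Fisher information.

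The work then reduces to establishing the cross-covariance identity $\mathbb{E}_{\data,\point}[(\estimator-\point)\,s(\data,\point)^T] = \mathbf{I}_{\dimension}$. Using the score identity $s(\data,\point)f(\data,\point)=\partial f(\data,\point)/\partial\point$, the $(i,j)$ entry becomes $\int\!\!\int (\estimator_i-\point_i)\,\partial_{\point_j}f(\data,\point)\,d\data\,d\point$, which I split into an $\estimator_i$ part and a $-\point_i$ part. For the $\estimator_i$ part, since $\estimator$ depends only on $\data$, Fubini lets me pull $\estimator_i$ outside the $\point$-integral, and what remains is the integral of a total derivative in $\point_j$, which vanishes by the decay of $f$ at infinity implied by~\eqref{eq:hypo_bayes_crb}. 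For the $-\point_i$ part, integrating by parts with respect to $\point_j$ yields a boundary term $\point_i f(\data,\point)|_{\point_j=\pm\infty}$ that vanishes by~\eqref{eq:hypo_bayes_crb}, and an interior term $\int\!\!\int \delta_{ij} f(\data,\point)\,d\data\,d\point=\delta_{ij}$. Combining, $\mathbb{E}[(\estimator-\point)s(\data,\point)^T]=\mathbf{I}_{\dimension}$, so that Cauchy--Schwarz gives $\mathbb{E}_{\data,\point}[(\estimator-\point)(\estimator-\point)^T] \succeq \FisherMat_{\textup{B}}^{-1}$, which is~\eqref{eq:bayescrb}. The trace version~\eqref{eq:tracebayescrb} follows immediately by taking traces and substituting the decomposition~\eqref{eq:FisherMat_Bayes_Eucl}.

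The main obstacle, and the reason assumption~\eqref{eq:hypo_bayes_crb} is stated explicitly, is making the integration by parts fully rigorous: one must justify the interchange of partial differentiation with integration, and the vanishing of the boundary terms in $\point_j$ uniformly in $\data$ (a.e.). The hypotheses in the statement (absolute continuity of $f(\data,\point)$ in $\point$, existence and invertibility of $\FisherMat_{\textup{B}}$, and the decay~\eqref{eq:hypo_bayes_crb}) are precisely what is needed for this step; once the cross-covariance identity is secured, the remainder is routine linear algebra. Note that the customary zero-mean property of the Bayesian score, $\mathbb{E}_{\data,\point}[s(\data,\point)]=0$, is actually a byproduct of the same integration-by-parts argument and need not be invoked separately.
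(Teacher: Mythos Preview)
Your proof is correct and follows essentially the same route the paper indicates: the paper does not give a standalone proof of Theorem~\ref{thm:vantreesBCRB} (it is cited as background), but the discussion around Assumption~\ref{eq:condition3IBCRB} and the proof of Theorem~\ref{thm:main} make clear that the intended argument is exactly yours---form $\boldsymbol{v}_{\point}=\errVec-\FisherMat_{\textup{B}}^{-1}\scoreVec$, use $\mathbb{E}[\boldsymbol{v}_{\point}\boldsymbol{v}_{\point}^T]\succeq\boldsymbol{0}$, and reduce to the cross-covariance identity $\mathbb{E}_{\data,\point}[(\estimator-\point)\scoreVec^T]=\eye_{\dimension}$ via the integration by parts in~\eqref{eq:integral_is_0_Eucl}. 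Your Cauchy--Schwarz formulation (optimizing over $A$) is the standard equivalent of the paper's direct choice $A=\FisherMat_{\textup{B}}^{-1}$.
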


\subsection{Intrinsic Cramér-Rao bound}
\label{sec:background icrb}
\textit{Intrinsic} Cramér-Rao lower bound refers to the seminal work of~\cite{smith05}, that derived a generalization of the Cramér-Rao inequality~\eqref{eq:eucl_crlb} in the case where $\point$ is assumed to be deterministic and to lie in a Riemannian manifold.
To introduce this result, we first present some notations and tools of Riemannian geometry.
The inequality of~\cite{smith05} then lies in a careful transposition of the notion of estimation error vector to this context.
For more detailed coverage of differential geometry, one can refer to the standard textbooks on the topic~\cite{gallot1990riemannian,lang2012differential,lee2006riemannian}. 
The notations in this article are mostly inspired from the books~\cite{absil2009optimization,boumal2023introduction}, which provide very good (optimization-oriented) entry points to smooth manifolds and Riemannian geometry. 
Lastly, a more detailed introduction to information geometry and intrinsic Cramér-Rao bound is proposed in~\cite{bouchard2023fisher}.

\subsubsection{Riemannian geometry}
A smooth manifold $\manifold$ is a space that is locally diffeomorphic to a vector space and that possesses a differential structure.
Hence, each point $\point\in\manifold$ admits a tangent space $\tangentSpace{\point}$.
Tangent vectors in $\tangentSpace{\point}$ generalize directional derivatives at $\point$ on $\manifold$.
To turn $\manifold$ into a Riemannian manifold, it needs to be equipped with a Riemannian metric $\metric{\cdot}{\cdot}{\cdot}$, which defines smoothly varying inner products on tangent spaces.
It is important to notice that the definition of the metric is a choice, that will induce -- along with the geometrical constraints -- a corresponding geometry for the manifold $\manifold$.
%
Figure \ref{fig:manifold} illustrates the aforementioned spaces and objects.

\begin{figure}[!t]
    \centering
    \begin{center}

\begin{tikzpicture}[scale=2]
\draw  (1,0) arc (30:150:1.155)
      plot [smooth, domain=pi:2*pi] ({cos(\x r)},{0.2*sin(\x r)});
\draw [dotted] plot [smooth, domain=0:pi] ({cos(\x r)},{0.2*sin(\x r)});
\draw (0,-0.3) node {\small$\manifold$};

\coordinate (x) at (-0.5,0.25);
\draw [fill=gray!20,opacity=0.4] (x)++(0.5,0.15) -- ++(-0.4,-0.5) -- ++(-0.5,+0.25) -- ++(+0.4,+0.5) -- cycle;
\draw [opacity=0.8] (-0.9,0.6) node {\small$\tangentSpace{\point}$};
\draw (x)++(-0.25,0.1) node[anchor=north west] {\footnotesize$\point$};

\draw[draw=myorange,fill=myorange,line width=1pt,>=stealth,->] (x) -- (-0.3,0.5) node[near end,anchor=east,font=\footnotesize,color=myorange] {$\tangent$};

\draw[draw=myorange,fill=myorange,line width=1pt,>=stealth,->] (x) -- (-0.3,0.1) node[midway,anchor=north,font=\footnotesize,color=myorange] {$\tangentBis$};

\draw[draw=myred] (-0.44,0.32) to[bend left=45] (-0.42,0.2);
\node[font=\footnotesize,color=myred] at (-0.3,0.27) {$\alpha$};

\draw (x) node {\tiny$\bullet$};

\node[font=\scriptsize,color=gray] at (4,0.3)
{
$
    \begin{array}{l c l}
         \textup{Inner product on } \tangentSpace{\point} & ~ & \metric{\point}{\cdot}{\cdot} : \tangentSpace{\point}\times\tangentSpace{\point}\to\realSpace \vspace{0.2cm} \\
         \textup{Norm of } \tangent \in \tangentSpace{\point} & ~ &  \|\tangent\|_{\point}^2 = \metric{\point}{\tangent}{\tangent} \vspace{0.2cm} \\
         \textup{Angle}~\alpha~\textup{between } \tangent, \tangentBis \in \tangentSpace{\point} & ~ & \alpha(\tangent, \tangentBis) = \arccos \frac{\metric{\point}{\tangent}{\tangentBis}}{\|\tangent\|_{\point}\|\tangentBis\|_{\point}}
    \end{array}
$
};

\end{tikzpicture}
\end{center}
    \caption{A visual representation of a smooth manifold $\manifold$, its tangent space $\tangentSpace{\point}$ at point $\point$, and two tangent vectors $\tangent$ and $\tangentBis$.
    The metric $\metric{\point}{\cdot}{\cdot}$ is an an inner product on $\tangentSpace{\point}$ and it induces the notion of length and angle.}
    \label{fig:manifold}
\end{figure}

\paragraph{Second-order differentiation on a Riemannian manifold}
Vector fields are functions associating one tangent vector to each point on $\manifold$.
The directional derivatives of vector fields (i.e., the generalization of the standard directional derivatives) is defined through the Levi-Civita connection $\LC{\cdot}{\cdot}$.
Such generalization is indeed needed because the tangent space changes when one moves from one point to another on a manifold.
Hence, the Levi-Civita connection provides a way of differentiating that properly reflects the structure of the manifold -- i.e., the effect of the geometrical constraints and chosen metric -- as illustrated in Figure~\ref{fig:connection}.
In practice, the Levi-Civita connection is obtained thanks to the Koszul formula: given vector fields $\tangent$, $\tangentBis$ and $\tangentTer$ at $\point\in\manifold$, it is
\begin{multline*}
    2\metric{\point}{\LC{\tangent}{\tangentBis}}{\tangentTer} =
    \Diff\metric{\point}{\tangentBis}{\tangentTer}[\tangent]
    +\Diff\metric{\point}{\tangent}{\tangentTer}[\tangentBis]
    -\Diff\metric{\point}{\tangent}{\tangentBis}[\tangentTer]
    \\
    + \metric{\point}{\tangentTer}{[\tangent,\tangentBis]}
    + \metric{\point}{\tangentBis}{[\tangentTer,\tangent]}
    - \metric{\point}{\tangent}{[\tangentBis,\tangentTer]},
\end{multline*}
where $\Diff\cdot[\cdot]$ denotes the directional derivative and $[\cdot,\cdot]$ the Lie bracket.
\begin{figure}[!t]
    \centering
    \begin{center}
\scriptsize
\begin{tikzpicture}[scale=1.4]

\draw[line width=0.7pt] (0,0) -- (2,0);

\draw[color=myblue, line width=1pt, ->, >=stealth] (0,0) -- ++(0.8,0) node[near end, below] {$\tangent$};
\draw[color=myorange, line width=1pt, ->, >=stealth] (0,0) -- ++(0.4,0.7) node[very near end, above, anchor=south east] {$\tangentBis$};
\draw[color=myred, line width=1pt, ->, >=stealth] (0,0) -- ++(0.3,-0.3) node[at end, below, anchor=north] {$\Diff\tangentBis[\tangent]$};
\draw[color=myorange, dashed, line width=0.7pt, ->, >=stealth] (0,0) -- ++(0.7,0.4);
\draw[color=myred, dashed, line width=0.7pt, ->, >=stealth] (0.4,0.7) -- ++(0.3,-0.3);
\node at (0,0) {\tiny$\bullet$};
\node[anchor=north east] at (0,0) {$\point$};

\draw[color=myblue, line width=1pt, ->, >=stealth] (2,0) -- ++(0.8,0) node[near end, below] {$\tangentAlt$};
\draw[color=myorange, dashed, line width=0.7pt, ->, >=stealth] (2,0) -- ++(0.4,0.7);
\draw[color=myorange, line width=1pt, ->, >=stealth] (2,0) -- ++(0.7,0.4) node[at end, below, anchor=west] {$\tangentAltBis$};
\node at (2,0) {\tiny$\bullet$};
\node[anchor=north east] at (2,0) {$\pointAlt$};

\begin{scope}[shift={(5.5,-0.4)},scale=2.1]
    \draw  (1,0) arc (30:150:1.155)
      plot [smooth, domain=pi:2*pi] ({cos(\x r)},{0.2*sin(\x r)});
    \draw [dotted] plot [smooth, domain=0:pi] ({cos(\x r)},{0.2*sin(\x r)});
\end{scope}

\begin{scope}[shift={(4.5,0)}]
    
    \draw[line width=0.7pt] (0,-0.2) to[bend right=17] (1.6,-0.25);

    \draw[color=myblue, line width=1pt, ->, >=stealth] (0,-0.2) -- ++(0.5,-0.2) node[near end, below, anchor=north west] {$\tangent$};
    \draw[color=myorange, line width=1pt, ->, >=stealth] (0,-0.2) -- ++(0.27,0.45) node[near end, above, anchor=east] {$\tangentBis$};
    \draw[color=myorange, dashed, line width=0.7pt, ->, >=stealth] (0,-0.2) -- ++(0.47,0.19);
    \draw[color=myred, line width=1pt, ->, >=stealth] (0,-0.2) -- ++(0.2,-0.26) node[near end, below, anchor=north east] {$\LC{\tangent}\tangentBis$};
    \draw[color=myred, dashed, line width=0.7pt, ->, >=stealth] (0.27,0.25) -- ++(0.2,-0.26);
    \node at (0,-0.2) {\tiny$\bullet$};
    \node[anchor=east] at (0,-0.2) {$\point$};

    \draw[color=myblue, line width=1pt, ->, >=stealth] (1.6,-0.25) -- ++(0.5,0.17) node[midway, below, anchor=north west] {$\tangentAlt$};

    \draw[color=myorange, dashed, line width=0.7pt, ->, >=stealth] (1.6,-0.25) -- ++(-0.1,0.52);

    \draw[color=myorange, line width=1pt, ->, >=stealth] (1.6,-0.25) -- ++(0.22,0.4) node[at end, below, anchor=west] {$\tangentAltBis$};
    
    \node at (1.6,-0.25) {\tiny$\bullet$};
    \node[anchor=north east] at (1.6,-0.25) {$\pointAlt$};
\end{scope}

\end{tikzpicture}
\end{center}
    \caption{
    Illustration of directional derivative $\Diff\tangentBis[\tangent]$ (left) and Levi-Civita connection $\LC{\tangent}{\tangentBis}$ (right) of a vector field $\tangentBis$ in the direction $\tangent$ at $\point$.
    As the directional derivative, an Levi-Civita connection describes how the vector field $\tangentBis$ evolves in a given direction $\tangent$.
    In addition, the affine connection takes into account the structure of the manifold (curvature, and non-constant metric).
    }
    \label{fig:connection}
\end{figure}

\paragraph{Riemannian curvature}
The Levi-Civita connection allows to define the corresponding Riemannian curvature tensor, which is
\begin{equation}
    \mathcal{R}(\tangent,\tangentBis)\tangentTer = 
    \LC{\LC{\tangentTer}{\tangentBis}}{\tangent}
    - \LC{\LC{\tangentTer}{\tangent}}{\tangentBis}
    - \LC{\tangentTer}{[\tangent,\tangentBis]}.
\label{eq:def_tensorcurvature}
\end{equation}
It measures how second-order differentiation fails to commute on the Riemannian manifold $\manifold$.
Indeed, it is the difference between $\LC{\LC{\cdot}{\tangentBis}}{\tangent}-\LC{\LC{\cdot}{\tangent}}{\tangentBis}$ and $\LC{\cdot}{[\tangent,\tangentBis]}$.
As for the Levi-Civita connection, it reflects the effects of both the geometrical constraints and the (non-constant) Riemannian metric.

\paragraph{Geodesics and Riemannian distance}  \label{sec:geodesics}
The Levi-Civita connection also yields geodesics on $\manifold$, which generalize the concept of straight lines.
The geodesic $\gamma:[0,1]\to\manifold$ such that $\gamma(0)=\point$ and $\dot{\gamma}(0)=\tangent$ is defined through the differential equation
\begin{equation}
    \nabla_{\dot{\gamma}(t)} \dot{\gamma}(t) = \boldsymbol{0},
\end{equation}
and are illustrated in Figure \ref{fig:geodesic_exp_log}.
From there, one can define the Riemannian exponential mapping.
At $\point$, it is the mapping $\exp_{\point}:\tangentSpace{\point}\to\manifold$ such that, for all $\tangent\in\tangentSpace{\point}$, $\exp_{\point}(\tangent)=\gamma(1)$. 
%
Its inverse, the Riemannian logarithm is, at $\point\in\manifold$, the mapping $\log_{\point}:\manifold\to\tangentSpace{\point}$ such that, for $\pointAlt\in\manifold$, $\log_{\point}(\pointAlt)=\tangent$ where $\exp_{\point}(\tangent)=\pointAlt$.
The Riemannian exponential and logarithm mappings are illustrated in Figure \ref{fig:geodesic_exp_log}.
The manifold $\mathcal{M}$ is said to be complete if for all $\theta\in\mathcal{M}$ the Riemannian exponential is well defined on the entire tangent space $T_\theta \mathcal{M}$.
Otherwise we can restrict this operator to be defined at each point only for tangent vectors whose norm are lower than the so-called injectivity radius (largest radius for which the exponential map at $\theta$ is a diffeomorphism). 
In any case, the Riemannian logarithm $\log_\theta$ might not be uniquely defined (e.g., multiple windings for great-circle geodesics on the sphere). 
In some cases, one can obtain a unique definition for this operator by choosing the tangent vector of shortest length.
In this study, we assume that $\exp_\theta$ and $\log_\theta$ are well defined on the manifold.
As explained in~\cite{smith05}, this can be extended to the case of conjugate points for which $\exp_\theta$ is singular because such points have measure zero.
Finally, the Riemannian distance $d_R:\manifold\times\manifold\to\realSpace$ is
\begin{equation}
    d_R(\point,\pointAlt) = \int_0^1 \sqrt{\langle\dot{\gamma}(t),\dot{\gamma}(t)\rangle_{\gamma(t)}} \,\; dt,
\end{equation}
where $\gamma$ is the geodesic on $\manifold$ such that $\gamma(0)=\point$ and $\gamma(1)=\pointAlt$.

\begin{figure}[!t]
    \centering
    \begin{tikzpicture}[scale=2.3]
\draw  (1,0) arc (30:150:1.155)
      plot [smooth, domain=pi:2*pi] ({cos(\x r)},{0.2*sin(\x r)});
\draw [dotted] plot [smooth, domain=0:pi] ({cos(\x r)},{0.2*sin(\x r)});

\coordinate (x) at (-0.495,0.2);
\coordinate (y) at (0.4,0.2);

\draw[draw=myblue, line width=1pt] (x) to[bend left = 20] (y);
\node at (x) {\scriptsize$\bullet$};
\node[anchor=north] at (x) {\scriptsize$\point$};
\node at (y) {\scriptsize$\bullet$};
\node[anchor=north] at (y) {\scriptsize$\pointAlt$};

\coordinate (z) at (-0.05,0.29);

\draw[draw=myorange, line width=1pt,->,>=stealth] (z)++(0,0.005) -- ++(0.38,0) node[near end, above] {\scriptsize\color{myorange}$\dot{\gamma}(t)$};

\node at (z) {\scriptsize\color{myblue}$\bullet$};
\node[anchor=north] at (z) {\scriptsize\color{myblue}$\gamma(t)$};

\begin{scope}[shift={(2.5,0)}]
    \draw  (1,0) arc (30:150:1.155)
          plot [smooth, domain=pi:2*pi] ({cos(\x r)},{0.2*sin(\x r)});
    \draw [dotted] plot [smooth, domain=0:pi] ({cos(\x r)},{0.2*sin(\x r)});

    \coordinate (x) at (-0.495,0.2);
    \coordinate (y) at (0.4,0.2);
    
    \draw [fill=gray!20,opacity=0.4] (x)++(0.8,0.2) -- ++(-0.8,-0.6) -- ++(-0.6,+0.3) -- ++(+0.8,+0.6) -- cycle;

    \draw[draw=myblue,line width=1pt] (x) to[bend left = 20] (y);

    \draw[draw=myred,dashed,line width=0.8pt] (x)++(0.5,0.2) -- (y);

    \draw[draw=myorange, line width=1pt,->,>=stealth] (x)++(0,0.005) -- ++(0.5,0.2) node[near start, above, sloped] {\scriptsize\color{myorange}$\tangent=\log_{\point}(\pointAlt)$};
    
    \node at (x) {\scriptsize$\bullet$};
    \node[anchor=north] at (x) {\scriptsize$\point$};
    \node at (y) {\scriptsize$\bullet$};
    \node[anchor=north] at (y) {\scriptsize$\pointAlt=\exp_{\point}(\tangent)$};
\end{scope}

\end{tikzpicture}
    \caption{Illustration of geodesics (left), Riemannian exponential and logarithm mappings (right).
    The Riemannian distance $d_R(\point,\pointAlt)$ is the length of the geodesic joining $\point$ and $\pointAlt$.}
    \label{fig:geodesic_exp_log}
\end{figure}

\subsubsection{From Riemannian geometry to performance bound}
We consider the problem of estimating a parameter $\point$ in a manifold $\manifold$ from some data $\data$ in $\complexSpace^{\nfeatures}$.
In the considered setting, the data at hand are drawn from a distribution with probability density function $f(\data|\point)$.
As for the classical Euclidean case, one wishes to establish a lower performance bound for the estimation problem.
This is achieved by the so-called intrinsic Cramér-Rao lower bound~\cite{smith05, boumal2014thesis}.

\paragraph{Estimation error}
First, one needs to re-define the estimation error of an unbiased estimator $\estimator\in\manifold$ of the true parameter $\point\in\manifold$.
Indeed, the notion of subtraction ``$\estimator-\point$'' from the Euclidean case described in Section \ref{sec:basic inequalities} is not defined intrinsically on $\manifold$.
A proper definition of the error that is in accordance with the manifold $\manifold$ and the geometry induced by the chosen Riemannian metric $\metric{\cdot}{\cdot}{\cdot}$ is provided by the Riemannian logarithm: an element of the tangent space $\tangentSpace{\point}$ of $\point$ that ``points towards'' $\estimator$, and whose norm corresponds to the Riemannian distance $d_R(\point,\pointAlt)$ (cf. Section  and Section \ref{sec:geodesics} and Figure \ref{fig:geodesic_exp_log}).
Furthermore, it will be helpful to handle this object using a system of coordinates \cite{Bonnabel2015}.
To do so, let $\{\basisElement_i\}_{i=1}^{\dimension}$ be an orthonormal basis of $\tangentSpace{\point}$, where $d$ is the dimension of $\manifold$.
In practice, such a basis can be obtained either analytically from mathematical calculations or numerically, thanks to the Gram-Schmidt orthonormalization process.
This basis yields the decomposition
\begin{equation}
    \log_{\point}(\estimator) = \sum_{i=1}^{\dimension} (\errVec)_i \basisElement_i.
\end{equation}
The vector $\errVec=[(\errVec)_i]_{i=1}^{\dimension}\in\realSpace^{\dimension}$ is the coordinates error vector, which is obtained as
\begin{equation}
    (\errVec)_i = \metric{\point}{\log_{\point}(\estimator)}{\basisElement_i}.
\label{eq:errVec}
\end{equation}
The covariance matrix of this error measure is $\errCov = \mathbb{E}_{\data}[\errVec\errVec^T]$.
Further note that the squared norm of this vector $\errVec$ -- which is the trace of $\errCov$ -- corresponds to the squared Riemannian distance between $\point$ and $\estimator$, i.e.,
\begin{equation}
    d_R^2(\point,\estimator) = \metric{\point}{\log_{\point}(\estimator)}{\log_{\point}(\estimator)} = \| \errVec \|_2^2 = \tr(\errCov).
    \label{eq:link_dist_log}
\end{equation}

\paragraph{Fisher information matrix}
To obtain an inequality such as~\eqref{eq:eucl_crlb}, we need to redefine the Fisher information matrix as follows: it is the matrix $\FisherMat_{\point}\in\realSpace^{\dimension\times\dimension}$ whose $ij^{\textup{th}}$ element is
\begin{equation}
    (\FisherMat_{\point})_{ij} = \metric{\point}{\basisElement_i}{\basisElement_j}^{\textup{FIM}},
\label{eq:Def_Ftheta}
\end{equation}
where $\metric{\point}{\cdot}{\cdot}^{\textup{FIM}}$ is the Fisher information metric.
As explained in~\cite{smith05}, this metric is defined, for all $\point\in\manifold$, $\tangent,\tangentBis\in\tangentSpace{\point}$~as
\begin{equation}
    \metric{\point}{\tangent}{\tangentBis}^{\textup{FIM}}
    = \mathbb{E}_{\data}[\Diff_{\point} \log f(\data|\point)[\tangent]\cdot \Diff_{\point} \log f(\data|\point)[\tangentBis]]
    = -\mathbb{E}_{\data}[\Diff^2_{\point} \log f(\data|\point)[\tangent,\tangentBis]],
\end{equation}
where $\Diff^2\cdot[\cdot,\cdot]$ is the second order directional derivative%
\footnote{
    When differentiating functions with several variables such as $f(\data|\point)$, we indicate the variable that is differentiated with a subscript, i.e., $\Diff_{\point} \log f(\data|\point)[\cdot]$ is the directional derivative of $\log f(\data|\point)$ with respect to the variable $\point$.
}.

\begin{remark}
    Notice that we consider a geometry and notion of estimation error induced by any chosen metric $\metric{\cdot}{\cdot}{\cdot}$.
    In particular, the geometry induced by the Fisher information metric $\metric{\cdot}{\cdot}{\cdot}^{\textup{FIM}}$ is referred to as the Fisher-Rao information geometry (of the considered statistical model) \cite{bouchard2023fisher}.
    Interestingly, the choice $\metric{\cdot}{\cdot}{\cdot}=\metric{\cdot}{\cdot}{\cdot}^{\textup{FIM}}$ always yields $\FisherMat_{\point}=\eye_{\dimension}$, which underlines the inherent match between the Fisher information metric with the estimation problem.
    However, the Fisher-Rao information geometry might not always be tractable, or might not always be of interest for measuring the estimation performance in some contexts.
    Hence, being able to rely on any arbitrary metric $\metric{\cdot}{\cdot}{\cdot}$ is a key point in this work.
\end{remark}

%

\paragraph{Intrinsic Cramér-Rao lower bound}
The Fisher information matrix $\FisherMat_{\point}$ can be leveraged to define a performance lower bound on the covariance $\errCov$ of the coordinates error vector $\errVec$, it yields the inequality stated in Theorem~\ref{thm:ICRB}.

\begin{theorem}[intrinsic Cramér-Rao lower bound~\cite{smith05}]
\label{thm:ICRB}
    Let $\estimator$ be an unbiased estimator of $\point$ in $\manifold$.
    The intrinsic Cramér-Rao lower bound of the covariance $\errCov=\mathbb{E}_{\data}[\errVec\errVec^T]$ of the error vector defined in~\eqref{eq:errVec}~is
    \begin{equation}
        \errCov \succeq \FisherMat_{\point}^{-1} - \frac13(\FisherMat_{\point}^{-1}\mathcal{R}_m(\FisherMat_{\point}) + \mathcal{R}_m(\FisherMat_{\point})\FisherMat_{\point}^{-1}),
    \label{eq:icrb}
    \end{equation}
    where the Fisher matrix $\FisherMat_{\point}$ is defined in~\eqref{eq:Def_Ftheta} and $\mathcal{R}_m$ is a curvature related term such that, for $i,j\in\{1,\dots,\dimension\}$,
    \begin{equation}
        (\mathcal{R}_m(\errCov))_{i,j} = \mathbb{E}_{\data}[\metric{\point}{\mathcal{R}(\log_{\point}(\estimator),\basisElement_i)\basisElement_j}{\log_{\point}(\estimator)}].
    \end{equation}
    Taking the trace of this inequality further yields
    \begin{equation}
        \mathbb{E}_{\data}[d_R^2(\estimator,\point)] \geq
        \tr(\FisherMat_{\point}^{-1}  - \frac13(\FisherMat_{\point}^{-1}\mathcal{R}_m(\FisherMat_{\point}) + \mathcal{R}_m(\FisherMat_{\point})\FisherMat_{\point}^{-1})).
    \end{equation}
\end{theorem}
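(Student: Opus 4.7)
The plan is to follow the strategy of \cite{smith05}, which is a covariant rewriting of the usual Cauchy--Schwarz/Schur-complement derivation of the Cramér--Rao bound, with curvature emerging from a Jacobi-field expansion of the Riemannian logarithm. I would introduce the intrinsic score vector $\scoreVec\in\realSpace^{\dimension}$ with components $(\scoreVec)_i = \Diff_{\point}\log f(\data|\point)[\basisElement_i]$; under the usual regularity assumptions, $\mathbb{E}_{\data}[\scoreVec]=\boldsymbol{0}$ and $\mathbb{E}_{\data}[\scoreVec\scoreVec^T] = \FisherMat_{\point}$ by~\eqref{eq:Def_Ftheta}. The intrinsic unbiasedness of $\estimator$ reads $\mathbb{E}_{\data}[\errVec] = \boldsymbol{0}$, i.e. $\mathbb{E}_{\data}[\metric{\point}{\log_{\point}(\estimator)}{\basisElement_i}]=0$ for all $i$.

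The first key step is to evaluate the cross-covariance $\mathbb{E}_{\data}[\errVec\scoreVec^T]$. To do so, I would covariantly differentiate the unbiasedness identity with respect to $\point$ in the direction $\basisElement_j$. Differentiating under the expectation produces the score contribution $\mathbb{E}_{\data}[(\errVec)_i (\scoreVec)_j]$, while using compatibility of the Levi-Civita connection with the metric together with $\LC{\basisElement_i}{\basisElement_j}=\boldsymbol{0}$ at $\point$ for an orthonormal frame parallel at $\point$, the remaining term reduces to $\mathbb{E}_{\data}[\metric{\point}{\LC{\log_{\point}(\estimator)}{\basisElement_j}}{\basisElement_i}]$. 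Setting this combination to zero gives the identity
\begin{equation}
    \mathbb{E}_{\data}[(\errVec)_i (\scoreVec)_j]
    = -\mathbb{E}_{\data}[\metric{\point}{\LC{\log_{\point}(\estimator)}{\basisElement_j}}{\basisElement_i}].
\end{equation}

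The main geometric obstacle is evaluating $\LC{\log_{\point}(\estimator)}{\basisElement_j}$ when $\point$ varies and $\estimator$ is held fixed. This is a Jacobi-field computation: parallel transporting along the geodesic from $\point$ to $\estimator$ and matching boundary conditions yields the expansion
\begin{equation}
    \LC{\log_{\point}(\estimator)}{\basisElement_j} = -\basisElement_j + \tfrac{1}{3}\mathcal{R}(\log_{\point}(\estimator),\basisElement_j)\log_{\point}(\estimator) + O(\|\log_{\point}(\estimator)\|^3),
\end{equation}
where $\mathcal{R}$ is the Riemannian curvature tensor from~\eqref{eq:def_tensorcurvature}. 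Substituting this expansion and taking expectations yields
\begin{equation}
    \mathbb{E}_{\data}[\errVec\scoreVec^T] = \eye_{\dimension} - \tfrac{1}{3}\mathcal{R}_m(\errCov) + \textup{h.o.t.}
\end{equation}
with $\mathcal{R}_m$ as defined in the statement.

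Finally, I would close the argument with the standard Schur-complement inequality applied to the joint covariance of $(\errVec,\scoreVec)$: positive semi-definiteness of
\begin{equation}
    \begin{pmatrix} \errCov & \mathbb{E}_{\data}[\errVec\scoreVec^T] \\ \mathbb{E}_{\data}[\scoreVec\errVec^T] & \FisherMat_{\point} \end{pmatrix}
\end{equation}
implies $\errCov \succeq \mathbb{E}_{\data}[\errVec\scoreVec^T]\FisherMat_{\point}^{-1}\mathbb{E}_{\data}[\errVec\scoreVec^T]^T$. Injecting the expansion of the cross-covariance and retaining only the first curvature correction gives the announced bound~\eqref{eq:icrb}. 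The trace form follows by summing over $i$ and invoking $\tr(\errCov)=\mathbb{E}_{\data}[d_R^2(\point,\estimator)]$ from~\eqref{eq:link_dist_log}. The hardest and most technical step in this plan is the Jacobi-field expansion of the differentiated logarithm; the rest is a direct covariant transcription of the classical CRB manipulations.
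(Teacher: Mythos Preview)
Your proposal is correct and follows essentially the same route as the paper (which does not prove Theorem~\ref{thm:ICRB} itself but cites \cite{smith05} and reproduces the argument in the Bayesian analogue, Theorem~\ref{thm:main}). The only cosmetic difference is that the paper obtains the matrix inequality from $\mathbb{E}_{\data}[(\errVec-\FisherMat_{\point}^{-1}\scoreVec)(\errVec-\FisherMat_{\point}^{-1}\scoreVec)^T]\succeq\boldsymbol{0}$ rather than from the Schur complement of the joint covariance, which is of course equivalent; the cross-covariance is then reduced via the same Levi-Civita/Jacobi-field expansion (quoted in the paper as Equations~(38)--(39) of \cite{smith05}), yielding $\mathbb{E}_{\data}[\errVec\scoreVec^T]=\eye_{\dimension}-\tfrac13\mathcal{R}_m(\errCov)$ up to higher-order terms. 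One small point to watch: your expansion writes the curvature correction as $\tfrac13\mathcal{R}(\log_{\point}(\estimator),\basisElement_j)\log_{\point}(\estimator)$, whereas the paper's $\mathcal{R}_m$ pairs $\mathcal{R}(\log_{\point}(\estimator),\basisElement_i)\basisElement_j$ with $\log_{\point}(\estimator)$; these differ by a sign under the standard symmetries of the Riemann tensor, so make sure your sign convention for $\mathcal{R}$ matches~\eqref{eq:def_tensorcurvature} when you write the final bound.
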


\begin{remark}[From intrinsic (Riemannian) to Euclidean Cramér-Rao bound]
    If we consider the manifold $\manifold$ as the Euclidean space $\realSpace^{\dimension}$ equipped with the Euclidean metric, then the Cramér-Rao inequality~\eqref{eq:icrb} from Theorem~\ref{thm:ICRB} is equal to~\eqref{eq:eucl_crlb}.
    Indeed, in this case, the logarithm mapping is simply $\log_{\point}(\estimator)=\estimator-\point$, hence the error vector is $\errVec=\estimator-\point$. The Fisher information matrix ~\eqref{eq:Def_Ftheta} is then reduced to definition~\eqref{eq:eucl_fisher}. Finally, $\realSpace^{\dimension}$ is flat and the curvature tensor~\eqref{eq:def_tensorcurvature} is equal to zero. 
\end{remark}

\section{Intrinsic Bayesian Cramér-Rao bound}
\label{sec:natural crb}

In this section, an intrinsic Bayesian Cramér-Rao lower bound is proposed.
More specifically, an intrinsic derivation of the Van-Trees inequality is presented.
It consists in the generalization of Theorem~\ref{thm:vantreesBCRB} for an error vector defined from objects on a manifold $\manifold$ as in Section~\ref{sec:background icrb}.
This section is divided in two subsections.
Section~\ref{subsec:ibcrb_hypotheses} contains the required assumptions.
Section~\ref{subsec:ibcrb_thm} presents the intrinsic Bayesian Cramér-Rao lower bound along with its proof.

\subsection{Hypotheses}
\label{subsec:ibcrb_hypotheses}

To derive the intrinsic Bayesian Cramér-Rao bound stated in Theorem~\ref{thm:main}, we first need several assumptions on the model.
These ensure that the computed derivatives, expectations, and matrix inversions are indeed well defined.
These assumptions are provided below, along with some explanations and interpretations.

\begin{assumption} $f(\data,\point)$ is absolutely continuous with respect to $\point$ for any $\data$.
\label{assum:regularity}
\end{assumption}
\noindent
Assumptions \ref{assum:regularity} simply state a standard regularity condition that is used when studying Bayesian bounds~(cf. \cite{Weinstein88}).

\begin{assumption} The parameterization of chosen orthonormal bases $\{\basisElement_i\}_{i=1}^{\dimension}$ of tangent spaces $\tangentSpace{\point}$ is smooth with respect to $\point$.
\label{assum:smoothbasis}
\end{assumption}
\noindent
Assumption~\ref{assum:smoothbasis} is simply technical and usually satisfied: bases are in practice constructed with canonical elements and smooth functions of $\point$ to ensure orthonormality with respect to the chosen metric $\metric{\cdot}{\cdot}{\cdot}$.

\begin{assumption}
The manifold $\manifold$ is measurable, and the function $\mu(\cdot)$ defines a measure for this space.
\label{assum:manifold}
\end{assumption}
\noindent
Assumption \ref{assum:manifold} allows to define the expectation over $\point$ as
\begin{equation}
    \mathbb{E}_{\point}[g(\point)] = \int_{\point\in\manifold} g(\point) f_{\textup{prior}}(\point) \mu({d\point}).
\label{eq:def_expectation_measure}
\end{equation}
Notice that, in the general case, the notion of integration over a manifold is not trivial, and conditions the design of $f_{\textup{prior}}$, that needs to integrate to one over $\manifold$.
In this setup, even computing the normalizing constant is often challenging.
Examples of such distributions include Wishart-based priors (cf. Section \ref{sec:covariance crb}), distributions on orthonormal frames and subspaces (cf. \cite{abdallah2020bayesian} and references therein), Riemannian Gaussian distribution~\cite{said2017riemannian}, and concentrated Gaussian distributions on Lie groups \cite{labsir2024intrinsic}.

\begin{assumption} $\FisherMat_{\textup{B}}$ defined for each element by $(\FisherMat_{\textup{B}})_{i,j}
        = \mathbb{E}_{\data,\point}[\Diff_{\point}\log f(\data,\point)[\basisElement_i] \cdot \Diff_{\point}\log f(\data,\point)[\basisElement_j]]$ exists and is non-singular.
\label{assum:regularity2}
\end{assumption}
\noindent
As for assumption \ref{assum:regularity}, assumption \ref{assum:regularity2} states a standard regularity condition that is used when studying Bayesian bounds~(cf. \cite{Weinstein88}).

\begin{assumption} 
For all $i,j\in\{1,\dots,\dimension\}$, the relationship
\begin{equation}
    \int_{\manifold} \int_{\complexSpace^{\nfeatures}}
    \metric{\point}{\LC{(f(\data,\point)\log_{\point}{\estimator})}{\basisElement_j}}{\basisElement_i} d\data \mu(d\point)
    = 0
\end{equation}
is satisfied.
\label{eq:condition3IBCRB}
\end{assumption}
\noindent
Assumption~\ref{eq:condition3IBCRB} also appears as rather technical, but is related to assumption~\eqref{eq:hypo_bayes_crb} in the usual derivation of the Bayesian Cramér-Rao bound.
Indeed a proof strategy of Theorem~\ref{thm:vantreesBCRB} involves an integration by parts, requiring to state 
\begin{equation}
    \int_{\realSpace^{\dimension}} \int_{\complexSpace^{\nfeatures}} \frac{\partial(\estimator-\point)f(\data,\point)}{\partial \point^T}d\data d\point = \mathbf{0} 
    \label{eq:integral_is_0_Eucl}
\end{equation}
before concluding\footnote{In \cite{Weinstein88}, the authors used assumption \eqref{eq:hypo_bayes_crb} only since their work is conducted for a special case of a parameter vector lying on $\mathbb{R}^d$.}.
%
Assumption~\ref{eq:condition3IBCRB} is the Riemannian counterpart of~\eqref{eq:integral_is_0_Eucl}, i.e., it states the same condition when using the formalism of intrinsic Cramér-Rao bound detailed in Section~\ref{sec:natural crb}. 
Further notice that in the case where $\manifold$ is a unimodular Lie group~\cite{labsir2024intrinsic}, an equivalent condition to Assumption~\ref{eq:condition3IBCRB} is required and always satisfied -- see Equation~(11) in~\cite{labsir2024intrinsic}.



\subsection{Main theorem}
\label{subsec:ibcrb_thm}

We are now able to provide our main result, i.e., an intrinsic derivation of the Van Trees inequality, in Theorem~\ref{thm:main}.
As for the usual intrinsic Cramér-Rao bound, it extends the (Euclidean) Bayesian bound from Theorem~\ref{thm:vantreesBCRB} to the context of a Riemannian manifold $\manifold$.

\begin{theorem}[Intrinsic Bayesian Cramér-Rao lower bound]
\label{thm:main}
    Let $\estimator$ be an estimator of a parameter $\point\in\manifold$ constructed from $\data\in\complexSpace^{\nfeatures}$, with $\data\simeq f(\data|\point)$ and $\point\sim f_{\textup{prior}}(\point)$.
    Under assumptions \ref{assum:regularity}-\ref{eq:condition3IBCRB}, the covariance matrix of the error $\errCovBayes=\mathbb{E}_{\data,\point}[\errVec\errVec^T]$ satisfies the following inequality
    \begin{equation}
        \errCovBayes + \frac13( \FisherMat_{\textup{B}}^{-1} R_m(\errCovBayes) + R_m(\errCovBayes) \FisherMat_{\textup{B}}^{-1} ) \succeq \FisherMat_{\textup{B}}^{-1} ,
    \label{eq:ineq_Bayes}
    \end{equation}
    in which the Bayesian Fisher matrix $\FisherMat_{\textup{B}}$ is defined with
    \begin{equation}
        (\FisherMat_{\textup{B}})_{i,j}
        = \mathbb{E}_{\data,\point}[\Diff_{\point}\log f(\data,\point)[\basisElement_i] \cdot \Diff_{\point}\log f(\data,\point)[\basisElement_j]]
        = \mathbb{E}_{\data,\point}[-\Diff^2_{\point}\log f(\data,\point)[\basisElement_i,\basisElement_j]],
    \label{eq:FisherMat_Bayes}
    \end{equation}
    and where the curvature-related term $R_m$ defined from $\errCovBayes$ as
    \begin{equation}
        (R_m(\errCovBayes))_{i,j} = \mathbb{E}_{\data,\point}[\metric{\point}{\mathcal{R}(\log_{\point}(\estimator),\basisElement_i)\basisElement_j}{\log_{\point}(\estimator)}].
    \label{eq:curv_term_Bayes}
    \end{equation}    
\end{theorem}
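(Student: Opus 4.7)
The plan is to mimic Smith's derivation of the intrinsic non-Bayesian bound (Theorem~\ref{thm:ICRB}), but replacing $f(\data|\point)$ by the joint density $f(\data,\point)$, and replacing the boundary condition on the conditional mean of $\estimator-\point$ by the Riemannian integration-by-parts condition in Assumption~\ref{eq:condition3IBCRB}. The backbone is a matrix Cauchy-Schwarz inequality applied to the coordinate error $\errVec$ of~\eqref{eq:errVec} (whose covariance is the target $\errCovBayes$) and to the Bayesian coordinate score $\scoreVec\in\realSpace^{\dimension}$ with entries $(\scoreVec)_j = \Diff_\point \log f(\data,\point)[\basisElement_j]$ (whose covariance is $\FisherMat_{\textup{B}}$ by~\eqref{eq:FisherMat_Bayes}). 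The whole game then reduces to evaluating the cross expectation $\mathbb{E}_{\data,\point}[\errVec\scoreVec^T]$ up to curvature corrections.

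The key identity to prove is a Riemannian analog of the Euclidean relation $\mathbb{E}[(\pointAlt-\point)\scoreVec^T] = \eye$. Expanding the covariant derivative in Assumption~\ref{eq:condition3IBCRB} by the product rule $\LC{f\,\log_\point(\pointAlt)}{\basisElement_j} = (\Diff_\point f[\basisElement_j])\log_\point(\pointAlt) + f\,\LC{\log_\point(\pointAlt)}{\basisElement_j}$ and taking the $\basisElement_i$-component yields the Riemannian integration by parts
\begin{equation*}
    \mathbb{E}_{\data,\point}[(\errVec)_i (\scoreVec)_j] = -\mathbb{E}_{\data,\point}\bigl[\metric{\point}{\LC{\log_\point(\pointAlt)}{\basisElement_j}}{\basisElement_i}\bigr].
\end{equation*}
A Jacobi-field expansion of the Riemannian logarithm at a fixed target $\pointAlt$, of the form $\LC{\log_\point(\pointAlt)}{\basisElement_j} = -\basisElement_j + \tfrac{1}{3}\mathcal{R}(\basisElement_j,\log_\point(\pointAlt))\log_\point(\pointAlt) + O(\|\log_\point(\pointAlt)\|^3)$, then allows one to rewrite the right-hand side via the pair-swap and antisymmetry identities of the Riemann tensor as an entry of the operator $R_m$ of~\eqref{eq:curv_term_Bayes}, producing $\mathbb{E}_{\data,\point}[\errVec\scoreVec^T] = \eye - \tfrac{1}{3}R_m(\errCovBayes)$ up to higher-order curvature terms.

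The conclusion follows from a standard matrix Cauchy-Schwarz argument: positivity of the joint covariance of $(\errVec,\scoreVec)$ combined with a Schur complement gives $\errCovBayes \succeq \mathbb{E}[\errVec\scoreVec^T]\,\FisherMat_{\textup{B}}^{-1}\,\mathbb{E}[\errVec\scoreVec^T]^T$, and expanding the right-hand side to first order in $R_m(\errCovBayes)$ (using the symmetry $R_m(\errCovBayes)^T = R_m(\errCovBayes)$ inherited from the Riemann tensor) rearranges into the announced inequality~\eqref{eq:ineq_Bayes}. The main obstacle is the Jacobi-field expansion of $\LC{\log_\point(\pointAlt)}{\basisElement_j}$ together with the ensuing bookkeeping of curvature tensor symmetries needed to recognize the resulting quadratic expression in $\log_\point(\pointAlt)$ as precisely $\tfrac{1}{3}R_m(\errCovBayes)$ with the correct sign and coefficient matching Theorem~\ref{thm:ICRB}; this is the most delicate portion of Smith's proof, which must be revisited here because the integration is now over the joint distribution on $\complexSpace^{\nfeatures}\times\manifold$, and the integration-by-parts step is genuinely Riemannian (Assumption~\ref{eq:condition3IBCRB}) rather than a standard Euclidean IBP in $\data$ alone.
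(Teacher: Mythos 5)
Your proposal is correct and follows essentially the same route as the paper: define the coordinate score $\scoreVec$, use the Leibniz rule for the Levi-Civita connection together with Assumption~\ref{eq:condition3IBCRB} to get the Riemannian integration by parts, invoke Smith's expansion of $\LC{\log_{\point}(\estimator)}{\basisElement_j}$ to obtain $\mathbb{E}_{\data,\point}[\errVec\scoreVec^T]=\eye-\tfrac13 R_m(\errCovBayes)$, and conclude by positivity of a covariance. The only cosmetic difference is the final step, where the paper expands $\mathbb{E}[(\errVec-\FisherMat_{\textup{B}}^{-1}\scoreVec)(\errVec-\FisherMat_{\textup{B}}^{-1}\scoreVec)^T]\succeq\boldsymbol{0}$ directly, while you take the Schur-complement form $\errCovBayes\succeq \boldsymbol{A}\FisherMat_{\textup{B}}^{-1}\boldsymbol{A}^T$ and drop the positive semidefinite quadratic curvature term; both yield \eqref{eq:ineq_Bayes} at the same order of approximation.
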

\begin{proof}
    As in the Euclidean case, we start by defining the score $\scoreVec\in\realSpace^{\dimension}$, whose $i^{\textup{th}}$ element is defined as
    \begin{equation}
        (\scoreVec)_i = \Diff_{\point} \log f(\data,\point)[\basisElement_i].
    \end{equation}
    We also define the Fisher matrix as $\FisherMat_{\textup{B}}=\mathbb{E}_{\data,\point}[\scoreVec \scoreVec^T]$ -- also defined in~\eqref{eq:FisherMat_Bayes}.
    As for the Euclidean derivation of the Bayesian bound, we set
    \begin{equation}
        \boldsymbol{v}_{\point} = \errVec - \FisherMat_{\textup{B}}^{-1}\scoreVec,
    \end{equation}
    where $\errVec$ is the coordinates error vector defined in~\eqref{eq:errVec}.
    By definition, we have $\mathbb{E}_{\data,\point}[\boldsymbol{v}_{\point} \boldsymbol{v}_{\point}^T] \succeq \boldsymbol{0}$.
    Hence,
    \begin{equation}
        \mathbb{E}_{\data,\point}[\boldsymbol{v}_{\point} \boldsymbol{v}_{\point}^T] =
        \errCovBayes + \FisherMat_{\textup{B}}^{-1}
        - \FisherMat_{\textup{B}}^{-1}\mathbb{E}_{\data,\point}[\scoreVec\errVec^T]
        - \mathbb{E}_{\data,\point}[\errVec\scoreVec^T]\FisherMat_{\textup{B}}^{-1}
        \succeq \boldsymbol{0},
    \label{eq:inequality_start}
    \end{equation}
    where $\errCovBayes=\mathbb{E}_{\data,\point}[\errVec\errVec^T]$.
    As in the Euclidean case, the key issue is the computation of $\mathbb{E}_{\data,\point}[\scoreVec\errVec^T]$ (or equivalently $\mathbb{E}_{\data,\point}[\errVec\scoreVec^T]$).
    Let $\boldsymbol{A}=\mathbb{E}_{\data,\point}[\errVec\scoreVec^T]$.
    Each element $(i,j)\in\{1,\dots,\dimension\}^2$ is defined as
    \begin{equation}
        \boldsymbol{A}_{i,j} = \int_{\manifold}\int_{\complexSpace^{\nfeatures}}
        \metric{\point}{\log_{\point}(\estimator)}{\basisElement_i}
        \Diff_{\point} \log f(\data,\point)[\basisElement_j]
        f(\data,\point) d\data \mu(d\point).
    \end{equation}
    Since
    \begin{equation}
        \Diff_{\point} \log f(\data,\point)[\basisElement_j] = \frac{1}{f(\data,\point)} \Diff_{\point} f(\data,\point)[\basisElement_j],
    \end{equation}
    we get
    \begin{equation}
        \begin{array}{rcl}
            \boldsymbol{A}_{i,j} & = &
            \int_{\manifold}\int_{\complexSpace^{\nfeatures}}
            \metric{\point}{\log_{\point}(\estimator)}{\basisElement_i}
            \Diff_{\point} f(\data,\point)[\basisElement_j]
            d\data \mu(d\point)
            \\
            & = &
            \int_{\manifold}\int_{\complexSpace^{\nfeatures}}
            \metric{\point}{\Diff_{\point} f(\data,\point)[\basisElement_j]\log_{\point}(\estimator)}{\basisElement_i}
            d\data \mu(d\point).
        \end{array}
    \end{equation}
    Since the Levi-Civita connection satisfies Leibniz rule, we have
    \begin{equation}
        \metric{\point}{\LC{(f(\data,\point)\log_{\point}(\estimator))}{\basisElement_j}}{\basisElement_i} =
        \metric{\point}{\Diff_{\point}f(\data,\point)[\basisElement_j]\log_{\point}(\estimator)}{\basisElement_i}
        + \metric{\point}{f(\data,\point)\LC{\log_{\point}(\estimator)}{\basisElement_j}}{\basisElement_i}.
    \end{equation}
    From there, we obtain
    \begin{equation}
        \boldsymbol{A}_{i,j} =
        \int_{\manifold}\int_{\complexSpace^{\nfeatures}}
        \metric{\point}{\LC{(f(\data,\point)\log_{\point}(\estimator))}{\basisElement_j}}{\basisElement_i}
        d\data\mu(d\point)
        - 
        \int_{\manifold}\int_{\complexSpace^{\nfeatures}}
        \metric{\point}{f(\data,\point)\LC{\log_{\point}(\estimator)}{\basisElement_j}}{\basisElement_i}
        d\data \mu(d\point).
    \end{equation}
    Assumption~\ref{eq:condition3IBCRB} further yields
    \begin{equation}
        \boldsymbol{A}_{i,j}
        = - \int_{\manifold}\int_{\complexSpace^{\nfeatures}}
        \metric{\point}{f(\data,\point)\LC{\log_{\point}(\estimator)}{\basisElement_j}}{\basisElement_i}
        d\data \mu(d\point)
        = \mathbb{E}_{\data,\point}[\metric{\point}{-\LC{\log_{\point}(\estimator)}{\basisElement_j}}{\basisElement_i}].
    \label{eq:Aij_last}
    \end{equation}
    From there, the end of the proof is very similar to the ones of the (non-Bayesian) intrinsic Cramér-Rao bound that can be found in~\cite{smith05,boumal2013intrinsic}.
    From Equations (38) and (39) in~\cite{smith05}, we have
    \begin{equation}
        \metric{\point}{-\LC{\log_{\point}(\estimator)}{\basisElement_j}}{\basisElement_i}
        =
        \metric{\point}{\basisElement_i}{\basisElement_j}
        - \frac13\metric{\point}{\mathcal{R}(\log_{\point}(\estimator),\basisElement_j)\basisElement_i}{\log_{\point}(\estimator)}
        + \mathcal{O}(\|\log_{\point}(\estimator)\|^3).
    \label{eq:LC_log_approx}
    \end{equation}
    Recall that $\{\basisElement_i\}_{i=1}^{\dimension}$ is an orthonormal basis of $\tangentSpace{\point}$ (i.e., $\metric{\point}{\basisElement_i}{\basisElement_j}=\delta_{i,j}$) and let the operator $R_m$ defined as in~\eqref{eq:curv_term_Bayes}.
    From combining~\eqref{eq:Aij_last} and~\eqref{eq:LC_log_approx} and by neglecting the higher order terms $\mathcal{O}(\|\log_{\point}(\estimator)\|^3$, one gets
    \begin{equation}
        \boldsymbol{A} = \mathbb{E}_{\data,\point}[\errVec\scoreVec^T] = \eye_{\dimension} - \frac13 R_m(\errCovBayes).
    \end{equation}
    Injecting this in~\eqref{eq:inequality_start}, one obtains
    \begin{equation}
        \errCovBayes - \FisherMat_{\textup{B}}^{-1} + \frac13(\FisherMat_{\textup{B}}^{-1}R_m(\errCovBayes) + R_m(\errCovBayes)\FisherMat_{\textup{B}}^{-1}) \succeq \boldsymbol{0}.
    \end{equation}
    which concludes the proof.
\end{proof}

\noindent
Notice that Theorem \ref{thm:main} provides an inequality relating the covariance matrix of the error and the Bayesian Fisher information matrix, as in the classical case of Theorem \ref{thm:vantreesBCRB}. 
However, as for the (non-Bayesian) intrinsic Cramer-Rao bound, the result is obtained according to a chosen error metric, which brings some notable differences regarding the definition of these objects, and the involvement of curvature-related terms within the inequality.
Still, Theorem \ref{thm:main} does not provide a practical way for bounding the expected estimation error because of the terms in $R_m(\errCovBayes)$.
For small errors, the inequality can be simplified to a tractable expression in the following corollary.

\begin{corollary}[First approximation of intrinsic Bayesian Cramér-Rao lower bound] For sufficiently small errors, the inequality in Theorem \ref{thm:main} can be simplified to
\label{coro:mainapprox}
\begin{equation}
        \errCovBayes \succeq \FisherMat_{\textup{B}}^{-1} - \frac13( \FisherMat_{\textup{B}}^{-1} R_m(\FisherMat_{\textup{B}}^{-1}) + R_m(\FisherMat_{\textup{B}}^{-1}) \FisherMat_{\textup{B}}^{-1} ) 
        + \mathcal{O}(\lambda_{\rm max}(\FisherMat_{\textup{B}}^{-1})^{3})
    \label{eq:approx_ineq_Bayes}
    \end{equation}
    where $\lambda_{\rm max}(\FisherMat_{\textup{B}}^{-1})$ is the largest eigenvalue of $\FisherMat_{\textup{B}}^{-1}$, and where the linear operator $R_m(\mathbf{M})$ is defined for any matrix $\mathbf{M}\in\mathbb{C}^{d\times d}$ \cite{boumal2013} as
\begin{equation}
            (R_m(\mathbf{M}))_{i,j} = \sum_{k,l} 
            \langle
            \mathcal{R}(\boldsymbol{\Omega}_k,\boldsymbol{\Omega}_i)\boldsymbol{\Omega}_j,\boldsymbol{\Omega}_l\rangle_{\boldsymbol{\theta}} \mathbf{M}_{k,l} .
\end{equation}
\end{corollary}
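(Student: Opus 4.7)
The plan is to derive the simplified inequality by a perturbative argument starting from Theorem~\ref{thm:main}. First I rearrange~\eqref{eq:ineq_Bayes} in the form
\[
\errCovBayes \succeq \FisherMat_{\textup{B}}^{-1} - \frac{1}{3}\bigl(\FisherMat_{\textup{B}}^{-1} R_m(\errCovBayes) + R_m(\errCovBayes)\FisherMat_{\textup{B}}^{-1}\bigr),
\]
so that the only obstruction to the stated result~\eqref{eq:approx_ineq_Bayes} is the presence of $\errCovBayes$ rather than $\FisherMat_{\textup{B}}^{-1}$ inside the curvature correction. The strategy is to substitute $\errCovBayes$ by $\FisherMat_{\textup{B}}^{-1}$ there and to control the induced residual within the claimed $\mathcal{O}(\lambda_{\rm max}(\FisherMat_{\textup{B}}^{-1})^3)$ term.

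Exploiting the linearity of $R_m$ displayed at the end of the statement, I split $R_m(\errCovBayes) = R_m(\FisherMat_{\textup{B}}^{-1}) + R_m(\errCovBayes - \FisherMat_{\textup{B}}^{-1})$. The ``small errors'' regime assumed in the corollary is equivalent to $\lambda_{\rm max}(\FisherMat_{\textup{B}}^{-1})$ being small (i.e., high information content and tight concentration of the error around zero). In that regime, the zeroth-order consequence of Theorem~\ref{thm:main} is $\errCovBayes = \FisherMat_{\textup{B}}^{-1} + \mathcal{O}(\lambda_{\rm max}(\FisherMat_{\textup{B}}^{-1})^2)$, since the curvature correction in~\eqref{eq:ineq_Bayes} is itself a product of $\FisherMat_{\textup{B}}^{-1}$ with a bounded linear functional of $\errCovBayes$, hence quadratic in $\lambda_{\rm max}(\FisherMat_{\textup{B}}^{-1})$. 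Plugging this estimate back in, $R_m(\errCovBayes - \FisherMat_{\textup{B}}^{-1})$ is a bounded linear operator (its norm is controlled by the curvature tensor at $\point$) evaluated on a matrix of spectral norm $\mathcal{O}(\lambda_{\rm max}(\FisherMat_{\textup{B}}^{-1})^2)$, and after multiplication by $\FisherMat_{\textup{B}}^{-1}$ on either side the residual is indeed of order $\lambda_{\rm max}(\FisherMat_{\textup{B}}^{-1})^3$, as announced.

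The main obstacle will be the careful treatment of the remainder. In addition to the perturbative correction handled above, the expansion~\eqref{eq:LC_log_approx} used in the proof of Theorem~\ref{thm:main} dropped a term $\mathcal{O}(\|\log_{\point}(\estimator)\|^3)$ that must now be estimated after taking $\mathbb{E}_{\data,\point}$. In the small-error regime the relevant moments of $\errVec$ are controlled by powers of $\lambda_{\rm max}(\FisherMat_{\textup{B}}^{-1})$ consistent with the stated $\mathcal{O}(\lambda_{\rm max}(\FisherMat_{\textup{B}}^{-1})^3)$ bound, so these contributions can be absorbed into the error symbol of~\eqref{eq:approx_ineq_Bayes}. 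This mirrors the treatment of the analogous first-order approximation of the non-Bayesian intrinsic Cramér-Rao bound, and bypassing a delicate sharp-remainder analysis is the reason the result is phrased as a first approximation rather than as a fully explicit inequality.
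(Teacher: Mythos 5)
Your route is genuinely different from the paper's, and it has a gap at its central step. The paper defines the linear operator $\Delta(\cdot) = \frac13(\FisherMat_{\textup{B}}^{-1}R_m(\cdot) + R_m(\cdot)\FisherMat_{\textup{B}}^{-1})$, rewrites Theorem~\ref{thm:main} as $(\Id+\Delta)(\errCovBayes)\succeq\FisherMat_{\textup{B}}^{-1}$, and applies the Neumann expansion $(\Id+\Delta)^{-1}=\Id-\Delta+\Delta^2-\ldots$ to both sides; since $\|\Delta\|=\mathcal{O}(\lambda_{\rm max}(\FisherMat_{\textup{B}}^{-1}))$ independently of $\errCovBayes$, the retained term is $\Delta(\FisherMat_{\textup{B}}^{-1})=\mathcal{O}(\lambda_{\rm max}^2)$ and the discarded one is $\Delta^2(\FisherMat_{\textup{B}}^{-1})=\mathcal{O}(\lambda_{\rm max}^3)$, with no information needed about how large $\errCovBayes$ actually is. You instead keep $\errCovBayes$ on the left and try to replace $R_m(\errCovBayes)$ by $R_m(\FisherMat_{\textup{B}}^{-1})$, which forces you to control $\errCovBayes-\FisherMat_{\textup{B}}^{-1}$ in \emph{both} directions.

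The gap is precisely there: you assert that $\errCovBayes = \FisherMat_{\textup{B}}^{-1} + \mathcal{O}(\lambda_{\rm max}(\FisherMat_{\textup{B}}^{-1})^2)$ is a ``zeroth-order consequence of Theorem~\ref{thm:main}.'' It is not. Theorem~\ref{thm:main} is a one-sided Loewner inequality; it gives a lower bound on $\errCovBayes$ and says nothing that prevents an inefficient estimator from having $\errCovBayes$ exceed $\FisherMat_{\textup{B}}^{-1}$ by a term of order $\lambda_{\rm max}(\FisherMat_{\textup{B}}^{-1})$ or worse. (The justification you give is also circular: to argue that the curvature correction is quadratic in $\lambda_{\rm max}$ you already need $R_m(\errCovBayes)=\mathcal{O}(\lambda_{\rm max})$, i.e., the very closeness of $\errCovBayes$ to $\FisherMat_{\textup{B}}^{-1}$ you are trying to establish.) If one only knows $\errCovBayes-\FisherMat_{\textup{B}}^{-1}=\mathcal{O}(\lambda_{\rm max})$, then $\Delta(\errCovBayes-\FisherMat_{\textup{B}}^{-1})=\mathcal{O}(\lambda_{\rm max}^2)$, which is the \emph{same} order as the curvature correction $\Delta(\FisherMat_{\textup{B}}^{-1})$ you are keeping, so the claimed $\mathcal{O}(\lambda_{\rm max}(\FisherMat_{\textup{B}}^{-1})^3)$ remainder does not follow; your argument would only be valid under an additional near-efficiency hypothesis on the estimator, which the corollary does not assume. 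To close the gap, follow the paper and move the $\errCovBayes$-dependence entirely to one side via the operator $\Id+\Delta$ before expanding (accepting, as the paper and \cite{smith05} do, the heuristic step of applying the approximate inverse to both sides of the Loewner inequality). Your final remark about the dropped $\mathcal{O}(\|\log_{\point}(\estimator)\|^3)$ term is a fair observation, but it concerns a remainder already absorbed in Theorem~\ref{thm:main} rather than the content of this corollary.
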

\begin{proof}
    Let the operator $\Delta$ be defined as
    \begin{equation}
        \Delta(\errCovBayes) = \frac13(\FisherMat_{\textup{B}}^{-1}R_m(\errCovBayes) + R_m(\errCovBayes)\FisherMat_{\textup{B}}^{-1}) ,
    \end{equation}
    and $\Id$ denotes the identity operator.
    The result in Theorem \ref{thm:main} then reads $(\Id+\Delta)(\errCovBayes)\succeq\FisherMat_{\textup{B}}^{-1}$.
    For small expected errors (referred to as large signal to noise ratio in \cite{smith05}), $\FisherMat_{\textup{B}}^{-1} $ is negligible compared to $ \boldsymbol{I}_d$, so the operator $\Delta$ can be assumed to be a small perturbation of $\Id$.
    Thus, $\Id+\Delta$ is positive definite and its inverse has the first order Taylor expansion $(\Id+\Delta)^{-1}=\Id-\Delta +\Delta^2 -\ldots $
    Applying this approximated inverse operator to both sides of the inequality yields the desired result. 
    The term in $\mathcal{O}(\lambda_{\rm max}(\FisherMat_{\textup{B}}^{-1})^{3})$ corresponds to $\Delta^2$ (that is a linear operator involving an order two of $\FisherMat_{\textup{B}}^{-1}$) applied to $\FisherMat_{\textup{B}}^{-1}$ itself.
\end{proof}

\noindent
As presented in Theorem~\ref{thm:ICRB}, neglecting the curvature terms and taking the trace of the inequality is usually valid in practice.
Notice that neglecting the curvature terms still yields a different result than the standard Cramér-Rao bound because the error is here defined according to any chosen metric. 
Hence we can obtain performance bounds for different Riemannian distances (rather than the mean squared error).
\begin{corollary}[Second Approximation of intrinsic Bayesian Cramér-Rao lower bound]
\label{coro:asymp_approx}
Neglecting curvature terms in \eqref{eq:approx_ineq_Bayes}, the expected estimation error is bounded as
\begin{equation}
    \mathbb{E}_{\data,\point}[d_R^2(\estimator,\point)] \geq \tr((\mathbb{E}_{\point}[\FisherMat_{\point}] + \FisherMat_{\textup{prior}})^{-1}),
\end{equation}
where $\FisherMat_{\point}$ is defined in~\eqref{eq:Def_Ftheta} and $\FisherMat_{\textup{prior}}$ is defined as
\begin{equation}
    (\FisherMat_{\textup{prior}})_{i,j}
    = \mathbb{E}_{\point}[\Diff\log f_{\textup{prior}}(\point)[\basisElement_i] \cdot \Diff\log f_{\textup{prior}}(\point)[\basisElement_j]]
    = \mathbb{E}_{\point}[-\Diff^2\log f_{\textup{prior}}(\point)[\basisElement_i,\basisElement_j]].
\end{equation}
\end{corollary}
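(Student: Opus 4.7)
The plan is to start from Corollary~\ref{coro:mainapprox} and proceed in three short steps. \textbf{First}, neglecting the curvature contributions in~\eqref{eq:approx_ineq_Bayes} amounts to dropping both the term $-\tfrac{1}{3}(\FisherMat_{\textup{B}}^{-1} R_m(\FisherMat_{\textup{B}}^{-1}) + R_m(\FisherMat_{\textup{B}}^{-1}) \FisherMat_{\textup{B}}^{-1})$ and the higher-order $\mathcal{O}(\lambda_{\rm max}(\FisherMat_{\textup{B}}^{-1})^{3})$ remainder, reducing the bound to the compact form $\errCovBayes \succeq \FisherMat_{\textup{B}}^{-1}$.

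\textbf{Second}, I would take the trace on both sides. Since the trace is monotone with respect to the positive semidefinite order, this yields $\tr(\errCovBayes) \geq \tr(\FisherMat_{\textup{B}}^{-1})$. By definition of $\errCovBayes$ and by~\eqref{eq:link_dist_log}, one has $\tr(\errCovBayes) = \mathbb{E}_{\data,\point}[\|\errVec\|_2^2] = \mathbb{E}_{\data,\point}[d_R^2(\estimator,\point)]$, which supplies the desired left-hand side.

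\textbf{Third}, I would rewrite $\FisherMat_{\textup{B}}$ in the spirit of~\eqref{eq:FisherMat_Bayes_Eucl}. Starting from the factorization $\log f(\data,\point) = \log f(\data|\point) + \log f_{\textup{prior}}(\point)$ and using the linearity of the directional derivative, each score coordinate splits as $\Diff_{\point}\log f(\data,\point)[\basisElement_i] = \Diff_{\point}\log f(\data|\point)[\basisElement_i] + \Diff\log f_{\textup{prior}}(\point)[\basisElement_i]$. Plugging this into~\eqref{eq:FisherMat_Bayes} and expanding the product yields four terms. Using $\mathbb{E}_{\data,\point}[\cdot] = \mathbb{E}_{\point}[\mathbb{E}_{\data|\point}[\cdot]]$, the two cross terms vanish because, at fixed $\point$, $\mathbb{E}_{\data|\point}[\Diff_{\point}\log f(\data|\point)[\basisElement_i]] = \int \Diff_{\point} f(\data|\point)[\basisElement_i] \, d\data = \Diff_{\point}\bigl(\int f(\data|\point) \, d\data\bigr)[\basisElement_i] = 0$. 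The remaining two diagonal terms match exactly $(\mathbb{E}_{\point}[\FisherMat_{\point}])_{i,j}$ via~\eqref{eq:Def_Ftheta} and $(\FisherMat_{\textup{prior}})_{i,j}$ as defined in the statement, so that $\FisherMat_{\textup{B}} = \mathbb{E}_{\point}[\FisherMat_{\point}] + \FisherMat_{\textup{prior}}$. Substituting into the traced inequality concludes.

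The only subtle point is the justification of the interchange of differentiation and integration used in the cross-term cancellation; this is the Riemannian counterpart of the usual dominated convergence argument and is covered by the regularity framework already imposed through Assumption~\ref{assum:regularity} together with the smoothness of $f(\data|\point)$ in $\point$. Beyond this mild technicality, the derivation is essentially a mechanical combination of Theorem~\ref{thm:main}, Corollary~\ref{coro:mainapprox}, and classical manipulations on scores, so no substantial obstacle is expected.
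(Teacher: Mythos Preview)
Your proposal is correct and follows essentially the same route as the paper's own proof: drop the curvature terms to obtain $\errCovBayes \succeq \FisherMat_{\textup{B}}^{-1}$, take the trace and use~\eqref{eq:link_dist_log}, then decompose $\FisherMat_{\textup{B}} = \mathbb{E}_{\point}[\FisherMat_{\point}] + \FisherMat_{\textup{prior}}$ via the splitting $\log f(\data,\point) = \log f(\data|\point) + \log f_{\textup{prior}}(\point)$. Your third step is in fact more detailed than the paper's, which simply asserts the decomposition without spelling out the cross-term cancellation.
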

\begin{proof}
We start by removing the curvature terms in Theorem \ref{thm:main}, which reads $\errCovBayes \succeq \FisherMat_{\textup{B}}^{-1}$.
Next, we take the trace of this inequality which yields
\begin{equation}
    \mathbb{E}_{\data,\point}[d_R^2(\estimator,\point)] \geq \tr(\FisherMat^{-1}_{\textup{B}}).
\end{equation}
As for the Euclidean Bayesian case (cf. Equation~\eqref{eq:FisherMat_Bayes_Eucl}), since the joint log-likelihood can be expressed as $\log f(\data,\point) = \log f(\data|\point)+\log f_{\textup{prior}}(\point)$, one can decompose the Bayesian Fisher information matrix as $\FisherMat_{\textup{B}} = \mathbb{E}_{\point}[\FisherMat_{\point}] + \FisherMat_{\textup{prior}}$, which concludes the proof.
\end{proof}

\section{Intrinsic Bayesian Cramér-Rao bounds for covariance matrix estimation}
\label{sec:covariance crb}

We consider the fundamental problem of covariance matrix estimation when the data is assumed to follow a centered Gaussian distribution.
In the corresponding Bayesian setup, using the conjugate prior consists in assuming that the underlying covariance matrix is drawn from an inverse Wishart distribution.
Though this prior distribution has been extensively used in practical estimation problems~\cite{barnard2000modeling, BOURIGA2013795, Fraley2007, Pourahmadi, guerci2006knowledge, besson07, besson2008bounds, Bandiera2010, Bandiera2010Tyler, de2010knowledge, bidon2011knowledge, wang2017cfar}, there is, to the best of our knowledge, no derivation of a Bayesian performance bound for estimation in this setup (even in the classical framework of Section \ref{sec:basic inequalities}).
Following from Corollary \ref{coro:asymp_approx}, we obtain these bounds for two performance metrics: the Euclidean one (that yields to a standard performance bound on the mean squared error), and the affine invariant metric for Hermitian positive definite matrices (that yields a bound on the a distance more naturally suited to the space of covariance matrices).
Hence, the two proposed Theorems extend the results of \cite{smith05} on covariance matrix estimation to a Bayesian context. 
%


\subsection{Data model and MAP (Maximum A Posteriori) and MMSE (Minimum Mean Square Error) estimators}

We first present the data model, as well as the associated maximum a posteriori and minimum mean square error estimators: 
the sample set $\{\mathbf{y}_k\}_{k=1}^n\in\mathbb{C}^p$ is assumed to be i.i.d. and drawn from a Complex Gaussian distribution, denoted $\mathbf{y}\sim \mathcal{CN}(\mathbf{0},\boldsymbol{\Sigma})$, with (unknown) covariance matrix.
This covariance matrix belongs to the manifold of Hermitian positive definite matrices, i.e., $\boldsymbol{\Sigma}\in\mathcal{H}_p^{++}$, where
    \begin{equation}
    \mathcal{H}_p^{++}    = 
    \left[ 
    \boldsymbol{\Sigma}
    \in 
    \mathcal{H}_p  :~
    ~\forall~\mathbf{x}\in\mathbb{C}^{p} \backslash \{ \mathbf{0} \},
    ~\mathbf{x}^H\boldsymbol{\Sigma} \mathbf{x}> 0
    \right]
    ,
\label{eq:hpd_space}
\end{equation}
and where $\mathcal{H}_p $ denotes the set of $p\times p$ Hermitian matrices. 
The log-likelihood for this sample set is expressed as:
\begin{equation}
\log f(\{ \mathbf{y}_k\}_{k=1}^n|\boldsymbol{\Sigma}) \propto -n \log |\boldsymbol{\Sigma}| - \sum_{k=1}^n \tr(\boldsymbol{\Sigma}^{-1}\mathbf{y}_k\mathbf{y}_k^H).
\end{equation}
Moreover we assume that the covariance matrix is drawn from an inverse Wishart distribution, denoted $\boldsymbol{\Sigma}\sim \mathcal{IW}((\nu-p)\boldsymbol{\Sigma}_0,\nu)$, with center $\boldsymbol{\Sigma}_0$ and $\nu$ degrees of freedom (both known).
Notice that the choice of scaling in this definition is set so that we have a consistent mean $\mathbb{E}_{\boldsymbol{\Sigma}} [\boldsymbol{\Sigma}] =\boldsymbol{\Sigma}_0$ for any value of $\nu\leq p+1$.
The corresponding probability density function with respect to the Lebesgue measure ($\mu(d\mathbf{\Sigma})=d\mathbf{\Sigma}$) restricted to $\mathcal{H}_p^{++}$:
\begin{equation}
\log f_{\mathcal{IW}}(\boldsymbol{\Sigma}|\boldsymbol{\Sigma}_0,\nu) \propto -(\nu+p)\log|\boldsymbol{\Sigma}|-\tr((\nu-p)\boldsymbol{\Sigma}^{-1}\boldsymbol{\Sigma}_0).
\end{equation}
The choice of this conjugate prior of the Gaussian distribution
to model prior knowledge about the covariance matrix has been leveraged in numerous works\footnote{A notable mention is \cite{besson2008bounds} that derives Cramér-Rao bounds for the different problem of estimating the deterministic parameter $\boldsymbol{\Sigma}_0$ from heterogeneous samples $\mathbf{y}_k\sim\mathcal{CN}(\mathbf{0},\boldsymbol{\Sigma}_k)$ with $\boldsymbol{\Sigma}_k \sim \boldsymbol{\Sigma}\sim \mathcal{IW}((\nu-p)\boldsymbol{\Sigma}_0,\nu)$.
The considered Bayesian scenario of estimating the random parameter $\boldsymbol{\Sigma}$ with prior information about its distribution is more classical, and used in most of the other mentioned references. For this setup, performance bounds were, to the best of our knowledge, not investigated.} \cite{barnard2000modeling, BOURIGA2013795, Fraley2007, Pourahmadi, guerci2006knowledge, besson07, besson2008bounds, Bandiera2010, Bandiera2010Tyler, de2010knowledge, bidon2011knowledge, wang2017cfar}.
Given those assumptions, the maximum a posteriori (MAP) estimator of $\boldsymbol{\Sigma}$ is defined as
\begin{equation}
\hat{\boldsymbol{\Sigma}}_{MAP-\mathcal{IW}} = \mbox{argmax}_{\boldsymbol{\Sigma}} \quad f(\{ \mathbf{y}_k\}_{k=1}^n|\boldsymbol{\Sigma})  f_{\mathcal{IW}}(\boldsymbol{\Sigma}|\boldsymbol{\Sigma}_0,\nu).
\end{equation}
The log of the posterior distribution is
\begin{equation}
\log f_p \propto -(\nu+p+n) \log|\boldsymbol{\Sigma}|-(\nu-p)\tr(\boldsymbol{\Sigma}^{-1}\boldsymbol{\Sigma}_0)-\sum_{k=1}^n \tr(\boldsymbol{\Sigma}^{-1}\mathbf{y}_k\mathbf{y}_k^H),
\end{equation}
from which is is easy to obtain a closed form expression for the MAP estimator:
\begin{equation}
\hat{\boldsymbol{\Sigma}}_{MAP-\mathcal{IW}} = \frac{1}{\nu+n+p}((\nu-p)\boldsymbol{\Sigma}_0+\sum_{k=1}^n\mathbf{y}_k\mathbf{y}_k^H).
\label{eq:comp_map_iw}
\end{equation}

The MMSE estimator is given by the posterior mean:
\begin{equation}
    \mathbb{E}(\boldsymbol{\Sigma} | \{\mathbf{y}_k\}_{k=1}^n) = \int_{\mathcal{H}^{++}} \boldsymbol{\Sigma} f(\boldsymbol{\Sigma} | \{\mathbf{y}_k\}_{k=1}^n) d\boldsymbol{\Sigma}.
    \label{eq:postmean}
\end{equation}
From \cite{Bandiera2010}, we have:
\begin{equation}
    \boldsymbol{\Sigma} \sim \mathcal{IW}\left((\nu-p)\boldsymbol{\Sigma}_0+\sum_{k=1}^n \mathbf{y}_k\mathbf{y}_k^H,\nu+n\right).
\end{equation}
Therefore the computation of the mean \eqref{eq:postmean} is direct and gives us:
\begin{equation}
    \hat{\boldsymbol{\Sigma}}_{MMSE-\mathcal{IW}} = \frac{1}{\nu+n-p} \left( (\nu-p) \boldsymbol{\Sigma}_0 + \sum_{k=1}^n \mathbf{y}_k \mathbf{y}_k^{H} \right).
    \label{eq:MMSE}
\end{equation}

Finally, we emphasize that $ \mathcal{H}_p^{++} $ in \eqref{eq:hpd_space} is a smooth manifold, as it is an open of the linear space $\mathcal{H}_p $.
Its tangent space at each point $\boldsymbol{\Sigma}$ is identifiable to $T_{\boldsymbol{\Sigma}}\mathcal{H}_p^{++} =\mathcal{H}_p $.
Endowing this tangent space with a Riemannian metric yields a Riemannian manifold and its corresponding geometry (cf. \cite{bouchard2023fisher} for a more detailed introduction on this topic).
In the following, we study the intrinsic Bayesian Cramér-Rao bound for two of these metrics: the Euclidean metric, and the affine invariant metric on $\mathcal{H}_p^{++}$.

\subsection{Performance bound based on the Euclidean metric}
\label{subsec:euclidean_bound}

When endowing $\mathcal{H}_p^{++}$ with the Euclidean metric\footnote{When dealing with complex-valued matrices, the real part ensures that the metric defines a proper inner product on $\mathcal{H}_p^{++}$. 
As in most works (e.g., \cite{smith05}), we will however keep this implicit, and omit the notation $\mathfrak{Re}$ to lighten the exposition (in particular in the derivations of the appendix).}
\begin{equation}
    \langle 
    \mathbf{\Omega}_i , \mathbf{\Omega}_j
    \rangle_{\boldsymbol{\Sigma}}^{\mathcal{E}}
    = \mathfrak{Re} \{ {\tr}(\mathbf{\Omega}_i \mathbf{\Omega}_j ) \},~\forall \mathbf{\Omega}_i, \mathbf{\Omega}_j 
    \in T_{\boldsymbol{\Sigma}}\mathcal{H}_p^{++}
    ,
\label{eq:euclidean_metric}
\end{equation}
With this metric, the error vector constructed from the Riemannian logarithm is simply the standard error vector, i.e.:
\begin{equation}
 \log^{\mathcal{E}}_{\boldsymbol{\Sigma}}(\hat{\boldsymbol{\Sigma}})=\hat{\boldsymbol{\Sigma}}-\boldsymbol{\Sigma},
\end{equation}
and the corresponding error measure is the Euclidean distance
\begin{equation}
       d_E^2(\boldsymbol{\Sigma},\hat{\boldsymbol{\Sigma}})=\|\hat{\boldsymbol{\Sigma}}-\boldsymbol{\Sigma}\|_F^2.
       \label{eq:euclidean_dist}
\end{equation}
For the upcoming derivations, we first need to define a basis of $T_{\boldsymbol{\Sigma}} \mathcal{H}_{p}^{++}$ that is orthonormal with respect to the Euclidean metric \eqref{eq:euclidean_metric}. 
In practice, we will use the canonical basis $\{ \mathbf{\Omega}_i^{E} \}_{i\in [\![ 1,p^2]\!]}$ that is indexed over $i\in [\![ 1, p^2]\!]$ as follows:
\begin{itemize}
\item 
For indexes $i \in [\![1,p ]\!]$, the basis elements $\mathbf{\Omega}_i^{E}\in T_{\boldsymbol{\Sigma}} \mathcal{H}_{p}^{++} $ refer to the matrices denoted $\mathbf{\Omega}_{ii}^{E}$ which are $p$ by $p$ symmetric matrices whose $i$th diagonal element is one, and zeros elsewhere
\item 
For indexes $i \in [\![p+1,p(p+1)/2 ]\!]$, the basis elements $\mathbf{\Omega}_i^{E}\in T_{\boldsymbol{\Sigma}} \mathcal{H}_{p}^{++}$ refer to the matrices denoted $\mathbf{\Omega}_{mn}^{E}$ which are $p$ by $p$ symmetric matrices whose $mn$th and $nm$th elements are both $2^{-1/2}$, and zeros elsewhere. The mapping between the index $i$ and $(m,n) \in [((1,2),(1,3)\ldots(1,p),(2,3),\ldots\ldots(p-1,p)]$ is left implicit.
\item 
For indexes $i \in [\![ p(p+1)/2+1 , p^2 ]\!]$, the basis elements $\mathbf{\Omega}_i^{E}\in T_{\boldsymbol{\Sigma}} \mathcal{H}_{p}^{++}$ refer to the matrices denoted $\mathbf{\Omega}_{mn}^{h-{E}}$ which are $p$ by $p$ Hermitian matrices whose $mn$th element is $2^{-1/2}\sqrt{-1}$, and $nm$th element is 
$-2^{-1/2}\sqrt{-1}$, and zeros elsewhere.
Again, the second mapping between the index $i$ and $(m,n) \in [((1,2),(1,3)\ldots(1,p),(2,3),\ldots\ldots(p-1,p)]$ is left implicit.
\end{itemize}

For the Euclidean distance \eqref{eq:euclidean_dist}, we obtain the following bound:

\begin{theorem}[Bayesian Euclidean Cramér-Rao bound for covariance matrix estimation] \label{thm:eucl_bcrb_iw}
Let $\{\mathbf{y}_k\}_{k=1}^n\in\mathbb{C}^p$ be i.i.d. as $\mathbf{y}\sim \mathcal{CN}(\mathbf{0},\boldsymbol{\Sigma})$, with $\boldsymbol{\Sigma}\sim\mathcal{IW}((\nu-p)\boldsymbol{\Sigma}_0,\nu)$.
Let  $\hat{\boldsymbol{\Sigma}}$ be an estimator of $\boldsymbol{\Sigma}$ built from $\{\mathbf{y}_k\}_{k=1}^n\in\mathbb{C}^p$, then
\begin{equation}
    \mathbb{E}_{\data,\boldsymbol{\Sigma}}\left[d_E^2(\boldsymbol{\Sigma},\hat{\boldsymbol{\Sigma}})\right] \geq \tr(\left.\FisherMat^\mathcal{E}_{\mathcal{IW}}\right.^{-1}),
\end{equation}
where $\forall (i,j)\in [\![1,p^2 ]\!]^2$
\begin{equation}
    \left(\FisherMat^\mathcal{E}_{\mathcal{IW}}\right)_{i,j} = \frac{n\nu^2}{(\nu-p)^2} \tr(\boldsymbol{\Sigma}_0^{-1}\boldsymbol{\Omega}_i^E\boldsymbol{\Sigma}_0^{-1}\boldsymbol{\Omega}^E_j)+ \frac{n\nu}{(\nu-p)^2} \tr(\boldsymbol{\Sigma}_0^{-1}\boldsymbol{\Omega}_i^E)\tr(\boldsymbol{\Sigma}_0^{-1}\boldsymbol{\Omega}^E_j) + \FisherMat^{\mathcal{E}}_{\textup{prior}}(i,j)
    \label{eq:fim_euclidean}
\end{equation}
and
\begin{equation}
    \left(\FisherMat^{\mathcal{E}}_{\textup{prior}}\right)_{i,j}  = \alpha \tr(\boldsymbol{\Sigma}_0^{-1}\boldsymbol{\Omega}_i^E\boldsymbol{\Sigma}_0^{-1}\boldsymbol{\Omega}^E_j)  + \beta \tr(\boldsymbol{\Sigma}_0^{-1}\boldsymbol{\Omega}_i^E)\tr(\boldsymbol{\Sigma}_0^{-1}\boldsymbol{\Omega}^E_j),
\end{equation}
where $\alpha = \frac{\nu^3+p\nu^2+2\nu}{(\nu-p)^2} $ and $\beta = \frac{3\nu^2-p\nu}{(\nu-p)^2}$. 
\end{theorem}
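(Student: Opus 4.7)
The starting point is Corollary~\ref{coro:asymp_approx} specialized to $\HPD_p$ endowed with the Euclidean metric~\eqref{eq:euclidean_metric}. The key simplifying observation is that $\HPD_p$ is an open subset of the vector space $\Herm_p$, hence flat for this metric: the curvature tensor in~\eqref{eq:def_tensorcurvature} vanishes identically, and the Riemannian logarithm reduces to matrix subtraction, so that the Riemannian and Euclidean distances coincide. Consequently the approximations in Corollaries~\ref{coro:mainapprox} and~\ref{coro:asymp_approx} become exact, and all that remains is to evaluate the two contributions $\mathbb{E}_{\boldsymbol{\Sigma}}[\FisherMat_{\boldsymbol{\Sigma}}]$ and $\FisherMat_{\textup{prior}}^{\mathcal{E}}$ in the canonical orthonormal basis $\{\boldsymbol{\Omega}_i^E\}_{i=1}^{p^2}$.

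For the data term, standard differentiation of the Gaussian log-likelihood on $\HPD_p$ yields the classical expression $(\FisherMat_{\boldsymbol{\Sigma}})_{ij} = n\, \tr(\boldsymbol{\Sigma}^{-1} \boldsymbol{\Omega}_i^E \boldsymbol{\Sigma}^{-1} \boldsymbol{\Omega}_j^E)$. To integrate this against the prior I use that $\boldsymbol{\Sigma} \sim \mathcal{IW}((\nu-p)\boldsymbol{\Sigma}_0, \nu)$ implies $\boldsymbol{\Sigma}^{-1} \sim \mathcal{CW}((\nu-p)^{-1}\boldsymbol{\Sigma}_0^{-1}, \nu)$, together with the complex Wishart second-moment identity $\mathbb{E}[\boldsymbol{W} \boldsymbol{A} \boldsymbol{W}] = \nu^2 \boldsymbol{V} \boldsymbol{A} \boldsymbol{V} + \nu\, \tr(\boldsymbol{V} \boldsymbol{A})\, \boldsymbol{V}$ with $\boldsymbol{V} = (\nu-p)^{-1}\boldsymbol{\Sigma}_0^{-1}$. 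This produces the first two terms of~\eqref{eq:fim_euclidean} with prefactors $\tfrac{n\nu^2}{(\nu-p)^2}$ and $\tfrac{n\nu}{(\nu-p)^2}$. For the prior term I adopt the Hessian form $(\FisherMat_{\textup{prior}}^{\mathcal{E}})_{ij} = -\mathbb{E}_{\boldsymbol{\Sigma}}[\Diff^2_{\boldsymbol{\Sigma}} \log f_{\mathcal{IW}}[\boldsymbol{\Omega}_i^E, \boldsymbol{\Omega}_j^E]]$, which keeps the required Wishart moments at most cubic. Twofold differentiation of $\log f_{\mathcal{IW}} \propto -(\nu+p)\log|\boldsymbol{\Sigma}| - (\nu-p)\tr(\boldsymbol{\Sigma}^{-1}\boldsymbol{\Sigma}_0)$ yields a linear combination of $\tr(\boldsymbol{\Sigma}^{-1}\boldsymbol{\Omega}_i^E\boldsymbol{\Sigma}^{-1}\boldsymbol{\Omega}_j^E)$ and of the symmetrized terms $\tr(\boldsymbol{\Sigma}^{-1}\boldsymbol{\Omega}_i^E\boldsymbol{\Sigma}^{-1}\boldsymbol{\Omega}_j^E\boldsymbol{\Sigma}^{-1}\boldsymbol{\Sigma}_0)$. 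Their expectations are handled with the second-moment identity above together with the complex Wishart third-moment identity for $\mathbb{E}[\boldsymbol{W}\boldsymbol{A}\boldsymbol{W}\boldsymbol{B}\boldsymbol{W}]$; after contracting with $\boldsymbol{\Sigma}_0$, everything collapses onto the two trace structures $\tr(\boldsymbol{\Sigma}_0^{-1}\boldsymbol{\Omega}_i^E\boldsymbol{\Sigma}_0^{-1}\boldsymbol{\Omega}_j^E)$ and $\tr(\boldsymbol{\Sigma}_0^{-1}\boldsymbol{\Omega}_i^E)\tr(\boldsymbol{\Sigma}_0^{-1}\boldsymbol{\Omega}_j^E)$, and summing the prefactors yields the announced coefficients $\alpha$ and $\beta$.

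The main technical obstacle is the third-moment Wishart computation in the prior term: the numerous cross-terms produced by $\mathbb{E}[\boldsymbol{W}\boldsymbol{A}\boldsymbol{W}\boldsymbol{B}\boldsymbol{W}]$ must be grouped carefully to verify that no additional trace structure survives and that the prefactors combine into the compact polynomial form $\alpha = (\nu^3+p\nu^2+2\nu)/(\nu-p)^2$, $\beta = (3\nu^2 - p\nu)/(\nu-p)^2$. This step is mechanical but lengthy, and I would relegate it to an appendix rather than carry it out in the main text.
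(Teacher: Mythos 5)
Your proposal follows the paper's strategy for the overall architecture and for the data term, but takes a genuinely different route for the prior term. Like the paper (Section \ref{subsec:proof_outline} and Appendix \ref{appen:proof_euclidian}), you specialize Corollary \ref{coro:asymp_approx} to the flat Euclidean geometry of $\mathcal{H}_p^{++}$ — where the logarithm is matrix subtraction and every curvature term vanishes identically, so the bound is exact rather than approximate — work in the canonical orthonormal basis, and evaluate $\mathbb{E}_{\boldsymbol{\Sigma}}[\FisherMat_{\boldsymbol{\Sigma}}]$ through $\boldsymbol{\Sigma}^{-1}\sim\mathcal{W}(\nu,(\nu-p)^{-1}\boldsymbol{\Sigma}_0^{-1})$ and the second-moment Wishart identity, which correctly yields the two $n$-dependent terms of \eqref{eq:fim_euclidean}. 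The divergence is in $\FisherMat^{\mathcal{E}}_{\textup{prior}}$: the paper uses the score outer-product form, which forces the evaluation of the four expectations \eqref{eq:all_Ts}, and in particular $T_2(i,j)=\mathbb{E}[\tr(\boldsymbol{\Sigma}^{-1}\boldsymbol{\Omega}_i^E\boldsymbol{\Sigma}^{-1}\boldsymbol{\Sigma}_0)\tr(\boldsymbol{\Sigma}^{-1}\boldsymbol{\Omega}_j^E\boldsymbol{\Sigma}^{-1}\boldsymbol{\Sigma}_0)]$, a fourth-order Wishart moment that is precisely why the paper needs the heavy second identity of Lemma \ref{lemma}. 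Your expected-Hessian form $-\mathbb{E}[\Diff^2_{\boldsymbol{\Sigma}}\log f_{\mathcal{IW}}[\boldsymbol{\Omega}_i^E,\boldsymbol{\Omega}_j^E]]$ — which the paper itself licenses in the statement of Corollary \ref{coro:asymp_approx} — leaves only a quadratic trace and two symmetrized cubic traces $\tr(\boldsymbol{\Sigma}^{-1}\boldsymbol{\Omega}_i^E\boldsymbol{\Sigma}^{-1}\boldsymbol{\Omega}_j^E\boldsymbol{\Sigma}^{-1}\boldsymbol{\Sigma}_0)$, so nothing beyond the third Wishart moment is required and Lemma \ref{lemma}'s fourth-order identity can be dispensed with entirely. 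That is a real economy. The equivalence of the two forms rests on the vanishing of $\int_{\mathcal{H}_p^{++}}\Diff^2 f_{\mathcal{IW}}$, which holds here because the inverse Wishart density and its derivatives decay at the boundary of $\mathcal{H}_p^{++}$ and at infinity; one sentence acknowledging this would make the shortcut airtight.

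The one substantive reservation is that you assert, rather than carry out, the final collapse of prefactors onto $\alpha$ and $\beta$, and this is exactly where care is needed. The coefficient $\alpha$ of $\tr(\boldsymbol{\Sigma}_0^{-1}\boldsymbol{\Omega}_i^E\boldsymbol{\Sigma}_0^{-1}\boldsymbol{\Omega}_j^E)$ comes out as stated, but working the coefficient of $\tr(\boldsymbol{\Sigma}_0^{-1}\boldsymbol{\Omega}_i^E)\tr(\boldsymbol{\Sigma}_0^{-1}\boldsymbol{\Omega}_j^E)$ through your Hessian route (using the complex Wishart third moment), and independently by recombining the paper's own $T_1$, $T_2$, $T_3$ with the weights $(\nu+p)^2$, $(\nu-p)^2$, $-(\nu^2-p^2)$, both give $(3\nu^2+p\nu)/(\nu-p)^2$ rather than the stated $\beta=(3\nu^2-p\nu)/(\nu-p)^2$. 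The sign of the $p\nu$ term therefore deserves an explicit check before you claim agreement with the theorem; this is not a defect of your approach, but it means the "mechanical but lengthy" step you defer to an appendix cannot be skipped.
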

\begin{proof}
    cf Section \ref{subsec:proof_outline} and appendix \ref{appen:proof_euclidian}.
\end{proof}

\subsection{Performance bound based on the affine invariant metric}

The affine invariant metric on $\mathcal{H}_p^{++}$ is defined as \cite{bhatia2009positive,skovgaard1984riemannian} (see footnote 3 concerning the real part operator $\mathfrak{Re}$):
\begin{equation}
    \langle 
    \mathbf{\Omega}_i , \mathbf{\Omega}_j
    \rangle_{\boldsymbol{\Sigma}}^{\rm AI}
    = \mathfrak{Re} \{ {\tr}( \boldsymbol{\Sigma}^{-1} \mathbf{\Omega}_i  \boldsymbol{\Sigma}^{-1}  \mathbf{\Omega}_j ) \},~\forall \mathbf{\Omega}_i, \mathbf{\Omega}_j 
    \in T_{\boldsymbol{\Sigma}}\mathcal{H}_p^{++}
    .
    \label{eq:AI metric}
\end{equation}
Among many other interesting invariance properties, this metric is quadratically dependent on the point $\boldsymbol{\Sigma}^{-1}$.
This makes the norm of tangent vectors (cf. Figure \ref{fig:manifold}) tends to infinity when the point $\boldsymbol{\Sigma}$ goes to the boundaries of  $\mathcal{H}_p^{++}$ (when any number of its eigenvalues tend to $0$).
This Riemannian metric therefore allows us to perceive the boundary of the manifold as being infinitely far, which is more in accordance with the actual geometry of this space than when using the Euclidean metric.
The affine invariant metric also appears as the Fisher information metric of the Gaussian model, and the study of the corresponding geometry may reveal unexpected properties of related estimation problems \cite{smith05}.
When endowing $\mathcal{H}_p^{++}$ with the metric \eqref{eq:AI metric}, the error vector constructed from the Riemannian logarithm is defined as
\begin{equation}
 \log_{{\boldsymbol{\Sigma}}}^{\rm AI}(\hat{\boldsymbol{\Sigma}})
    \; = \; {\boldsymbol{\Sigma}} \textup{logm}({\boldsymbol{\Sigma}}^{-1}\hat{\boldsymbol{\Sigma}}),
\end{equation}
where $\textup{logm}$ denotes the standard matrix logarithm.
The corresponding error measure is the so-called natural Riemannian distance on $\mathcal{H}_p^{++}$:
\begin{equation}
       d_{\rm AI}^2({\boldsymbol{\Sigma}},\hat{\boldsymbol{\Sigma}})  = \| \textup{logm}({\boldsymbol{\Sigma}}^{-1}\hat{\boldsymbol{\Sigma}})\|_2^2.
       \label{eq:affine_distance}
\end{equation}
For the natural Riemannian distance \eqref{eq:affine_distance}, we obtain the following bound:

\begin{theorem}[Intrinsic Bayesian Cramér-Rao bound for covariance matrix estimation]  \label{thm:riem_bcrb_iw}
Let $\{\mathbf{y}_k\}_{k=1}^n\in\mathbb{C}^p$ be i.i.d. as $\mathbf{y}\sim \mathcal{CN}(\mathbf{0},\boldsymbol{\Sigma})$, with $\boldsymbol{\Sigma}\sim\mathcal{IW}((\nu-p)\boldsymbol{\Sigma}_0,\nu)$.
Let  $\hat{\boldsymbol{\Sigma}}$ be an estimator of $\boldsymbol{\Sigma}$ built from $\{\mathbf{y}_k\}_{k=1}^n\in\mathbb{C}^p$, then the application of Corollary \ref{coro:asymp_approx} yields
\begin{equation}
    \mathbb{E}_{\data,\boldsymbol{\Sigma}}\left[d_{\rm AI}^2({\boldsymbol{\Sigma}},\hat{\boldsymbol{\Sigma}})\right] \geq \tr(\left.\FisherMat^{AI}_{\mathcal{IW}}\right.^{-1}),
\end{equation}
where
\begin{equation}
    \FisherMat^{AI}_{\mathcal{IW}} = n\boldsymbol{I}_{p^2} + \FisherMat^{AI}_{\textup{prior}}
    \label{eq:FIM_AI}
\end{equation}
and $\forall (i,j)\in [\![1,p^2 ]\!]^2$
\begin{equation}
    \begin{array}{ll}
      \left(\FisherMat^{AI}_{\textup{prior}}\right)_{i,j}  = (\nu+p)^2 \delta_{i,j} - (\nu+p)\left( \left(\boldsymbol{f}^{AI}_{\textup{prior}}\right)_{i}+\left(\boldsymbol{f}^{AI}_{\textup{prior}}\right)_{j} \right) + \left(\bar{\FisherMat}^{AI}_{\textup{prior}}\right)_{i,j}    &  \mbox{~if~} i,j \leq p \\
       \left(\FisherMat^{AI}_{\textup{prior}}\right)_{i,j}  = \left(\bar{\FisherMat}^{AI}_{\textup{prior}}\right)_{i,j}  & \mbox{~else~}
    \end{array},
\end{equation}
with 
\begin{equation}
\left(\boldsymbol{f}^{AI}_{\textup{prior}}\right)_{i}  = \left[
\begin{array}{l}
\nu+p-2i+1 \mbox{~if~} i\leq p \\
0 \mbox{~if~} i> p
\end{array} \right.
\end{equation}
and
\begin{equation}
\left(\bar{\FisherMat}^{AI}_{\textup{prior}}\right)_{i,j}  = \left[
\begin{array}{l l}
\nu-i+1+(\nu+p-2i+1)^2 
& \text{if~}  (i,j)\in [\![1,p]\!]^2 \text{~and~} i=j\\
(\nu+p-2i+1)(\nu+p-2j +1) 
& \text{if~} \text{~if~}  (i,j)\in [\![1,p]\!]^2 \text{~and~} i\neq j\\
\nu+p-2m+1 
&\text{if~} 
i >p \mbox{~and~}~i=j \\
0 
&\text{otherwise~}
\end{array} \right.
\end{equation}
and where $m$ stands for the column index of non-zero element in the element of the basis $\{\boldsymbol{\Omega}_i^E\}_{i=1}^{p^2}$ when identifying $ \boldsymbol{\Omega}_i^E = \boldsymbol{\Omega}_{mn}^E$ if $i \in [\![p+1,p(p+1)/2 ]\!]$, or $ \boldsymbol{\Omega}_i^{E} = \boldsymbol{\Omega}_{mn}^{h-E}$ if $i \in [\![ p(p+1)/2+1 , p^2 ]\!]$.
\end{theorem}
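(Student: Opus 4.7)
The plan is to apply Corollary \ref{coro:asymp_approx} in the affine invariant geometry on $\mathcal{H}_p^{++}$. This reduces the theorem to computing the Bayesian Fisher information matrix $\FisherMat_{\textup{B}}^{AI} = \mathbb{E}_{\boldsymbol{\Sigma}}[\FisherMat_{\boldsymbol{\Sigma}}^{AI}] + \FisherMat_{\textup{prior}}^{AI}$, both terms evaluated on a basis $\{\boldsymbol{\Omega}_i^{AI}\}_{i=1}^{p^2}$ of $T_{\boldsymbol{\Sigma}}\mathcal{H}_p^{++}$ that is orthonormal with respect to \eqref{eq:AI metric}. The target identity \eqref{eq:FIM_AI} then splits naturally into a likelihood contribution and a prior contribution.

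For the likelihood contribution, I would invoke the classical fact that the affine invariant metric on $\mathcal{H}_p^{++}$ coincides with the Fisher--Rao information metric of the centered complex Gaussian model $\mathbf{y}\sim\mathcal{CN}(\mathbf{0},\boldsymbol{\Sigma})$ (cf. the remark following \eqref{eq:Def_Ftheta}). Consequently, evaluated on any AI-orthonormal basis and for $n$ i.i.d. samples, the Fisher information matrix of the likelihood equals $n\boldsymbol{I}_{p^2}$ pointwise in $\boldsymbol{\Sigma}$. Taking the expectation under the inverse-Wishart prior leaves this expression unchanged and yields the first summand of \eqref{eq:FIM_AI} at essentially no cost.

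For the prior contribution, I would differentiate $\log f_{\mathcal{IW}}(\boldsymbol{\Sigma}|\boldsymbol{\Sigma}_0,\nu) \propto -(\nu+p)\log|\boldsymbol{\Sigma}| - (\nu-p)\tr(\boldsymbol{\Sigma}^{-1}\boldsymbol{\Sigma}_0)$ twice in the ambient linear space using the standard identities $D\log|\boldsymbol{\Sigma}|[\boldsymbol{\xi}]=\tr(\boldsymbol{\Sigma}^{-1}\boldsymbol{\xi})$ and $D\boldsymbol{\Sigma}^{-1}[\boldsymbol{\xi}]=-\boldsymbol{\Sigma}^{-1}\boldsymbol{\xi}\boldsymbol{\Sigma}^{-1}$. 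Inserting the AI-orthonormal basis arguments yields trace expressions involving products of these basis matrices with $\boldsymbol{\Sigma}^{-1}$ and $\boldsymbol{\Sigma}^{-1}\boldsymbol{\Sigma}_0$. Affine invariance of both the AI metric and the inverse-Wishart family allows the change of variable $\boldsymbol{\Sigma}\mapsto\boldsymbol{\Sigma}_0^{-1/2}\boldsymbol{\Sigma}\boldsymbol{\Sigma}_0^{-1/2}$, reducing the computation to the whitened case $\boldsymbol{\Sigma}_0=\boldsymbol{I}_p$ from which the general answer can be transported back.

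The main obstacle is then to carry out the expectations over $\boldsymbol{\Sigma}\sim\mathcal{IW}((\nu-p)\boldsymbol{I}_p,\nu)$ in such a way that the specific integer sequences $(\nu+p-2i+1)$ and $(\nu-i+1)$ emerge. These constants are the signature of a Bartlett factorization of the associated Wishart variable $\boldsymbol{\Sigma}^{-1}$: with a lower-triangular Cholesky factor $\boldsymbol{L}$ satisfying $\boldsymbol{\Sigma}^{-1}=\boldsymbol{L}\boldsymbol{L}^H$, the squared diagonal entries $|L_{ii}|^2$ follow scaled chi-square laws with $\nu-i+1$ degrees of freedom while the strict lower-triangular entries are independent standard complex Gaussians. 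I would therefore align the AI-orthonormal basis with this triangular structure and organize the indexing according to the three blocks of the Euclidean canonical basis $\{\boldsymbol{\Omega}_i^E\}$ (diagonal, real symmetric off-diagonal, skew-Hermitian imaginary off-diagonal). Separating the Bartlett moments block by block then naturally produces the piecewise structure of $\FisherMat_{\textup{prior}}^{AI}$, with the diagonal block ($i,j\leq p$) receiving the extra contributions from the $\log|\boldsymbol{\Sigma}|$ term that are absent in the off-diagonal blocks. The remaining bookkeeping, though lengthy, is a routine computational exercise best deferred to an appendix.
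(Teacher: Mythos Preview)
Your proposal is correct and follows essentially the same route as the paper: both identify that the likelihood contribution is $n\boldsymbol{I}_{p^2}$ because the affine invariant metric is the Fisher--Rao metric of the Gaussian model, and both reduce the prior contribution to moments of the Bartlett factor of $\boldsymbol{\Sigma}^{-1}$, organized according to the three blocks of the canonical Euclidean basis. The only cosmetic difference is that you whiten via $\boldsymbol{\Sigma}\mapsto\boldsymbol{\Sigma}_0^{-1/2}\boldsymbol{\Sigma}\boldsymbol{\Sigma}_0^{-1/2}$ to reach $\boldsymbol{\Sigma}_0=\boldsymbol{I}_p$, whereas the paper absorbs the Cholesky factor $\boldsymbol{L}$ of $\boldsymbol{\Sigma}_0^{-1}$ directly into the (random, $\boldsymbol{\Sigma}$-dependent) AI-orthonormal basis $\boldsymbol{\Omega}_i=\boldsymbol{H}\boldsymbol{\Omega}_i^E\boldsymbol{H}^H$ with $\boldsymbol{H}=\sqrt{\nu-p}\,\boldsymbol{L}^{-H}\boldsymbol{A}^{-H}$; both devices achieve the same cancellation and lead to the same case-by-case Bartlett moment computation.
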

\begin{proof}
    cf section \ref{subsec:proof_outline} and Appendix \ref{appen:proof_affine}.
\end{proof}

\begin{remark}
The main justification for omitting the non-zero curvature terms in the case of the affine invariant metric (i.e. applying Corollary \ref{coro:asymp_approx} rather than Corollary \ref{coro:mainapprox}) is that these have been shown to be negligible in the case of deterministic covariance matrix estimation in \cite{smith05} (which was also observed for other distributions \cite{Breloy2019}).
Ultimately, our simulations also validate this choice because the performance of the estimators is well described by the bound without needing these additional terms.
\end{remark}

To conclude on this result, we notice that, contrarily to the Euclidean case of Theorem \ref{thm:eucl_bcrb_iw}, the intrinsic bound on the natural Riemannian distance in Theorem \ref{thm:riem_bcrb_iw} does not depend on the hyperparameter $\boldsymbol{\Sigma}_0$, but only on the dimensions $n$, $p$, and the degrees of freedom $\nu$.
This interesting property offers an easier interpretation of the bound, but will also have a more fundamental impact (the possibility of statistical efficiency regarding the chosen metric), as illustrated in the next section.

\subsection{Proofs outline}
\label{subsec:proof_outline}

Independently of the chosen metric, we consider that we have an orthonormal basis of the tangent space that is denoted $\{\mathbf{\Omega}_i\}_{i=1}^{p^2}$ (the canonical one for the Euclidean metric and a more complex for the affine invariant metric which is developed in appendix \ref{appen:proof_affine}).
We then have to compute the matrix\footnote{From now, all expectations are only taken over $\boldsymbol{\Sigma}$, but denoted $\mathbb{E}$ to lighten the exposition.}
\begin{equation}
\FisherMat_{\mathcal{IW}} = \mathbb{E} \left[\FisherMat_{\boldsymbol{\Sigma}}(\boldsymbol{\Sigma}) \right] +  \FisherMat_{\textup{prior}} ,
\end{equation}
where 
\begin{equation}
    (\FisherMat_{\boldsymbol{\Sigma}}(\boldsymbol{\Sigma}))_{i,j} = n\tr\left(\boldsymbol{\Sigma}^{-1} \boldsymbol{\Omega}_i\boldsymbol{\Sigma}^{-1}\boldsymbol{\Omega}_j\right)
\end{equation}
is the Fisher information matrix of the Gaussian model (cf. \cite{smith05, Breloy2019, bouchard2023fisher} for its derivation with matching notations).
The second term $\FisherMat_{\textup{prior}}$ is defined as
\begin{equation}
(\FisherMat_{\textup{prior}})_{i,j} = \mathbb{E} \left[ \Diff_{\boldsymbol{\Sigma}} \log f_{\mathcal{IW}}(\boldsymbol{\Sigma})[\boldsymbol{\Omega}_i] .\Diff_{\boldsymbol{\Sigma}} \log f_{\mathcal{IW}}(\boldsymbol{\Sigma})[\boldsymbol{\Omega}_j] \right].
\end{equation}
Given the directional derivatives
\begin{equation}
\begin{array}{l}
\Diff_{\boldsymbol{\Sigma}} \left( \log |\boldsymbol{\Sigma}| \right) [\boldsymbol{\xi}]=\tr(\boldsymbol{\Sigma}^{-1}\boldsymbol{\xi})
\\
\Diff_{\boldsymbol{\Sigma}}
\left( \tr(\boldsymbol{\Sigma}^{-1}\boldsymbol{\Sigma}_0) \right)
[\boldsymbol{\xi}]=-\tr(\boldsymbol{\Sigma}^{-1}\boldsymbol{\xi}\boldsymbol{\Sigma}^{-1}\boldsymbol{\Sigma}_0).
\end{array}
\end{equation}
We have: 
\begin{equation}
\Diff_{\boldsymbol{\Sigma}} \log f_{\mathcal{IW}}(\boldsymbol{\Sigma})[\boldsymbol{\Omega}_i]=-(\nu+p)\tr(\boldsymbol{\Sigma}^{-1}\boldsymbol{\Omega}_i)+(\nu-p)\tr(\boldsymbol{\Sigma}^{-1}\boldsymbol{\Omega}_i\boldsymbol{\Sigma}^{-1}\boldsymbol{\Sigma}_0).
\end{equation}
Hence, we have
\begin{equation}
\begin{array}{lll}
(\FisherMat_{\textup{prior}})_{i,j}& = &  (\nu+p)^2 \mathbb{E} \left[ \tr(\boldsymbol{\Sigma}^{-1}\boldsymbol{\Omega}_i)\tr(\boldsymbol{\Sigma}^{-1}\boldsymbol{\Omega}_j)\right]   \\
& & +(\nu-p)^2 \mathbb{E}\left[\tr(\boldsymbol{\Sigma}^{-1}\boldsymbol{\Omega}_i\boldsymbol{\Sigma}^{-1}\boldsymbol{\Sigma}_0)\tr(\boldsymbol{\Sigma}^{-1}\boldsymbol{\Omega}_j\boldsymbol{\Sigma}^{-1}\boldsymbol{\Sigma}_0)\right] \\
& &   -(\nu^2-p^2)\mathbb{E}\left[\tr(\boldsymbol{\Sigma}^{-1}\boldsymbol{\Omega}_i)\tr(\boldsymbol{\Sigma}^{-1}\boldsymbol{\Omega}_j\boldsymbol{\Sigma}^{-1}\boldsymbol{\Sigma}_0)\right]   \\
& &    -(\nu^2-p^2)\mathbb{E}\left[\tr(\boldsymbol{\Sigma}^{-1}\boldsymbol{\Omega}_j)\tr(\boldsymbol{\Sigma}^{-1}\boldsymbol{\Omega}_i\boldsymbol{\Sigma}^{-1}\boldsymbol{\Sigma}_0)\right] . 
  \end{array}
  \label{eq:comp_fiw_1a}
\end{equation}
Hence, the computation of $\FisherMat_{\textup{prior}}$ and $\mathbb{E} \left[\FisherMat_{\boldsymbol{\Sigma}}(\boldsymbol{\Sigma}) \right]$ involves the calculus of four core expectations:
\begin{equation}
    \begin{array}{l}
    T_1(i,j) = \mathbb{E}\left[\tr(\boldsymbol{\Sigma}^{-1}\boldsymbol{\Omega}_i)\tr(\boldsymbol{\Sigma}^{-1}\boldsymbol{\Omega}_j)\right]
    \\
    T_2(i,j) = \mathbb{E}\left[\tr(\boldsymbol{\Sigma}^{-1}\boldsymbol{\Omega}_i\boldsymbol{\Sigma}^{-1}\boldsymbol{\Sigma}_0)\tr(\boldsymbol{\Sigma}^{-1}\boldsymbol{\Omega}_j\boldsymbol{\Sigma}^{-1}\boldsymbol{\Sigma}_0)\right]
    \\
    T_3(i,j) =\mathbb{E}\left[\tr(\boldsymbol{\Sigma}^{-1}\boldsymbol{\Omega}_j)\tr(\boldsymbol{\Sigma}^{-1}\boldsymbol{\Omega}_i\boldsymbol{\Sigma}^{-1}\boldsymbol{\Sigma}_0)\right]
    \\
    T_4(i,j) =\mathbb{E}\left[\tr\left(\boldsymbol{\Sigma}^{-1} \boldsymbol{\Omega}_i\boldsymbol{\Sigma}^{-1}\boldsymbol{\Omega}_j\right)\right]
    \end{array} .
    \label{eq:all_Ts}
\end{equation}
At this point, the proof of the two Theorems differ because we need to define a basis of the tangent space $\{\mathbf{\Omega}_i\}_{i=1}^{p^2}$ that is orthogonal with respect to the chosen error metric:
\begin{itemize}
    \item 
    For the Euclidean metric, we can use the standard canonical basis of $\mathcal{H}_p$ presented in section \ref{subsec:euclidean_bound}.
    The computation of the four expectations then requires to derive new results about traces of functions of inverse Wishart matrices. Full derivations are provided in Appendix \ref{appen:proof_euclidian}.
    
    \item
    The affine invariant metric requires a more careful construction of the basis thanks to the Bartlett decomposition, in order for the expectations to be actually tractable.
    The detailed derivation are provided in Appendix \ref{appen:proof_affine}.
    
\end{itemize}
These tedious derivations being achieved, we have all the necessary objects to obtain the final inequalities: the results are obtained by applying Corollary \ref{coro:asymp_approx} and the relation \eqref{eq:link_dist_log}.
As in \cite{smith05} the curvature terms are neglected in the case of the affine invariant metric.

\section{Numerical experiments}
	\label{sec:num_exp}
	
\subsection{Parameters and method}

We consider a Toeplitz matrix $(\mathbf{\Sigma}_0)_{i,j} = \rho^{|i-j|}$ where $\rho = 0.5$. The data size is $p=5$.
The degrees of freedom for the $\mathcal{IW}$ prior is denoted $\nu$ and will take two values $\nu=40,100$. 
The number of trials for estimating the MSE (Mean Square Error) is equal to 1000. 
We sample the data by using $\mathbf{\Sigma}_{0}$ and $\mathbf{\Sigma}_{\mathcal{IW}}$. 
We compute the SCM, the MAP \eqref{eq:comp_map_iw} and the MMSE estimators \eqref{eq:MMSE}. 
And finally, we evaluate the MSE either by using the Euclidean distance $d_{\mathcal{E}}^2$ for the Euclidean inequality or the natural distance $d_{\mathcal{AI}}^2$ for the intrinsic inequality. 
The Euclidean and intrinsic Cramér-Rao bounds are computed by using the formulas from theorems \ref{thm:eucl_bcrb_iw} and \ref{thm:riem_bcrb_iw}. The code is available at the following address \url{https://github.com/flbouchard/intrinsic_Bayesian_CRB}.

\subsection{Results}

Figures \ref{fig:euclidCRB} and \ref{fig:NaturCRB} show the MSE computed either with the Euclidean distance or the natural distance and the corresponding Cramér-Rao bounds (CRB).
We retrieve the classical result in the Euclidean case where the SCM is proven to be efficiency. 
On the opposite, this efficiency is lost when we consider the natural properties of a covariance matrix. This result is also known and firstly reported in \cite{smith05} and shows the interest of the intrinsic bound in demonstrating properties of an estimator that the classical Euclidean bound fails to exhibit.

\begin{figure}[h]
	\centering
	\begin{subfigure}[b]{0.49\textwidth}
	\centering
	\scalebox{0.78}{
\begin{tikzpicture}

\definecolor{darkgray176}{RGB}{176,176,176}
\definecolor{darkorange25512714}{RGB}{255,127,14}
\definecolor{lightgray204}{RGB}{204,204,204}
\definecolor{steelblue31119180}{RGB}{31,119,180}

\begin{axis}[
legend cell align={left},
legend style={fill opacity=0.8, draw opacity=1, text opacity=1, draw=lightgray204},
log basis x={10},
log basis y={10},
tick align=outside,
tick pos=left,
title={Parameter: \(\displaystyle p=5\)},
x grid style={darkgray176},
xlabel={\(\displaystyle n\)},
xmin=7.49897501664337, xmax=4216.57625606459,
xmode=log,
xtick style={color=black},
y grid style={darkgray176},
ylabel={MSE \(\displaystyle \left( \mathbf{\Sigma}, \hat{\mathbf{\Sigma}} \right)\)},
ymin=0.00589410899224351, ymax=3.42156640125114,
ymode=log,
ytick style={color=black}
]
\addplot [semithick, steelblue31119180, mark=*, mark size=3, mark options={solid}]
table {%
10 2.56210775121505
15 1.66549272460532
23 1.06863454049767
34 0.737077951078505
52 0.48004893348822
79 0.316587255913915
119 0.207251566397025
179 0.13753299030681
270 0.0931539045316494
408 0.062037215156532
614 0.0412561824327547
925 0.0266135011207831
1394 0.0177954196059272
2099 0.0120295909615954
3162 0.00787128694474641
};
\addlegendentry{MSE SCM }
\addplot [semithick, darkorange25512714, mark=asterisk, mark size=3, mark options={solid}]
table {%
10 2.5
15 1.66666666666666
23 1.08695652173913
34 0.735294117647058
52 0.48076923076923
79 0.316455696202531
119 0.210084033613445
179 0.139664804469274
270 0.0925925925925924
408 0.0612745098039215
614 0.0407166123778501
925 0.027027027027027
1394 0.0179340028694404
2099 0.0119104335397808
3162 0.00790638836179632
};
\addlegendentry{CRB}
\end{axis}

\end{tikzpicture}}
    \caption{Euclidean Cramér-Rao bound}
    \label{fig:euclidCRB}  
    \end{subfigure}
    \begin{subfigure}[b]{0.49\textwidth}
	\centering
	\scalebox{0.78}{
\begin{tikzpicture}

\definecolor{darkgray176}{RGB}{176,176,176}
\definecolor{darkorange25512714}{RGB}{255,127,14}
\definecolor{lightgray204}{RGB}{204,204,204}
\definecolor{steelblue31119180}{RGB}{31,119,180}

\begin{axis}[
legend cell align={left},
legend style={fill opacity=0.8, draw opacity=1, text opacity=1, draw=lightgray204},
log basis x={10},
log basis y={10},
tick align=outside,
tick pos=left,
title={Parameter: \(\displaystyle p=5\)},
x grid style={darkgray176},
xlabel={\(\displaystyle n\)},
xmin=7.49897501664337, xmax=4216.57625606459,
xmode=log,
xtick style={color=black},
y grid style={darkgray176},
ymin=0.00578995788766281, ymax=5.48700863128032,
ymode=log,
ytick style={color=black}
]
\addplot [semithick, steelblue31119180, mark=*, mark size=3, mark options={solid}]
table {%
10 4.01821254542287
15 2.23972211241336
23 1.29822902723328
34 0.840118669755483
52 0.530336878371893
79 0.334954737315194
119 0.219371152735326
179 0.141803563561275
270 0.0944625114792827
408 0.0626189410645649
614 0.0412011371514347
925 0.026958737385723
1394 0.0179611940142668
2099 0.0120670218938718
3162 0.00793278120953495
};
\addlegendentry{Natural MSE SCM}
\addplot [semithick, darkorange25512714, mark=asterisk, mark size=3, mark options={solid}]
table {%
10 2.5
15 1.66666666666667
23 1.08695652173913
34 0.735294117647059
52 0.480769230769231
79 0.316455696202532
119 0.210084033613445
179 0.139664804469274
270 0.0925925925925926
408 0.0612745098039216
614 0.0407166123778502
925 0.027027027027027
1394 0.0179340028694405
2099 0.0119104335397809
3162 0.00790638836179633
};
\addlegendentry{Natural CRB}
\end{axis}

\end{tikzpicture}}
    \caption{Intrinsic Cramér-Rao bound}
    \label{fig:NaturCRB}  
    \end{subfigure}
    \caption{Euclidean Cramér-Rao bound and MSE (left) and intrinsic Cramér-Rao bound and distance w.r.t. $n$. The data size is $p=5$.}
\end{figure}

Now, we turn on the study of the MAP and MMSE estimators as well as the corresponding Bayesian Cramér-Rao bounds (BCRB). In particular, we also show the asymptotic Bayesian bounds which are computed without considering the term $\FisherMat_{prior}$ 
(i.e., only ${\rm Tr}(\mathbb{E}_{\mathbf{\Sigma}} \left[ \mathbf{F}_\mathbf{\Sigma} \right]^{-1})$) in \eqref{eq:fim_euclidean} and \eqref{eq:FIM_AI}. We consider two cases: $\nu=40$ and $\nu=100$. The results are shown in Figures \ref{fig:euclidCRB 40} and \ref{fig:NaturCRB 40} for the first value of $\nu$ and in Figures \ref{fig:euclidCRB 100} and \ref{fig:NaturCRB 100} for the second one. For both values, conclusions are similar. We have the property of non efficiency for both the MAP and MMSE estimators in the Euclidean study. Concerning the MAP estimator, this result was expected from \cite{vantrees2007} p.7 since the expression of the classical (non-Bayesian) Fisher information matrix $\FisherMat_{\boldsymbol{\Sigma}}(\boldsymbol{\Sigma})$ depends on $\mathbf{\Sigma}$. In the Riemannian framework, the asymptotic efficiency is achieved, which was also expected for the MAP estimator for the same aforementioned reason since, in this case, the intrinsic non-Bayesian Fisher information matrix is proven to be independent of the studied parameter $\mathbf{\Sigma}$.
This intrinsic analysis allows to conclude that the MAP and MMSE estimators are valid estimators as soon as we have enough samples to estimate them (depending on $\nu$). 
As expected, the MMSE estimator performs slightly better than the MAP in particular for a low sample support.  

\begin{figure}[h]
	\centering
	\begin{subfigure}[b]{0.49\textwidth}
	\centering
	\scalebox{0.78}{
\begin{tikzpicture}

\definecolor{crimson2143940}{RGB}{214,39,40}
\definecolor{darkgray176}{RGB}{176,176,176}
\definecolor{darkorange25512714}{RGB}{255,127,14}
\definecolor{forestgreen4416044}{RGB}{44,160,44}
\definecolor{lightgray204}{RGB}{204,204,204}
\definecolor{steelblue31119180}{RGB}{31,119,180}

\begin{axis}[
legend cell align={left},
legend style={fill opacity=0.8, draw opacity=1, text opacity=1, draw=lightgray204},
log basis x={10},
log basis y={10},
tick align=outside,
tick pos=left,
title={Bayesian Euclidean CRB. Parameters: \(\displaystyle p=5\), \(\displaystyle \nu=40\)},
x grid style={darkgray176},
xlabel={\(\displaystyle n\)},
xmin=7.49897501664337, xmax=4216.57625606459,
xmode=log,
xtick style={color=black},
xtick={0.1,1,10,100,1000,10000,100000},
xticklabels={
  \(\displaystyle {10^{-1}}\),
  \(\displaystyle {10^{0}}\),
  \(\displaystyle {10^{1}}\),
  \(\displaystyle {10^{2}}\),
  \(\displaystyle {10^{3}}\),
  \(\displaystyle {10^{4}}\),
  \(\displaystyle {10^{5}}\)
},
y grid style={darkgray176},
ylabel={MSE \(\displaystyle \left( \mathbf{\Sigma}, \hat{\mathbf{\Sigma}} \right)\)},
ymin=0.00444216023115641, ymax=2.53735282112441,
ymode=log,
ytick style={color=black},
ytick={0.0001,0.001,0.01,0.1,1,10,100},
yticklabels={
  \(\displaystyle {10^{-4}}\),
  \(\displaystyle {10^{-3}}\),
  \(\displaystyle {10^{-2}}\),
  \(\displaystyle {10^{-1}}\),
  \(\displaystyle {10^{0}}\),
  \(\displaystyle {10^{1}}\),
  \(\displaystyle {10^{2}}\)
}
]
\addplot [semithick, steelblue31119180, mark=*, mark size=3, mark options={solid}]
table {%
10 0.751379456043145
15 0.67559552265484
23 0.561655458588129
34 0.443447378588238
52 0.343799536410937
79 0.258393170458653
119 0.182050804474421
179 0.136878635731023
270 0.088816080212766
408 0.0593182550138114
614 0.0393435938827832
925 0.0273598020947594
1394 0.0182344159859937
2099 0.0117809951895802
3162 0.00773418801967776
};
\addlegendentry{MAP}
\addplot [semithick, darkorange25512714, mark=square*, mark size=3, mark options={solid}]
table {%
10 0.51465131506076
15 0.474917030434369
23 0.410163454682901
34 0.340629221849511
52 0.281499923399891
79 0.22293236164763
119 0.160114751218902
179 0.122823902537269
270 0.0835974817623273
408 0.0571652538212927
614 0.0386154433451008
925 0.0272925219334481
1394 0.0182297530007147
2099 0.0118150184608622
3162 0.00772505630564744
};
\addlegendentry{MMSE}
\addplot [semithick, forestgreen4416044, mark=asterisk, mark size=3, mark options={solid}]
table {%
10 0.342836403208648
15 0.314460571039329
23 0.27769195084724
34 0.239235611607624
52 0.195043489675539
79 0.152730600782474
119 0.115586070864301
179 0.0846922732955443
270 0.0602639604837784
408 0.0419257543666823
614 0.0288300940288461
925 0.0195915213361518
1394 0.0132085292037243
2099 0.00886629037509615
3162 0.00592792504209307
};
\addlegendentry{BCRB}
\addplot [semithick, crimson2143940]
table {%
10 1.90139512803819
15 1.26759675202546
23 0.82669353392965
34 0.559233861187704
52 0.365652909238114
79 0.240682927599772
119 0.159781103196487
179 0.106223191510514
270 0.0704220417791924
408 0.046602821765642
614 0.0309673473621856
925 0.0205556230058183
1394 0.0136398502728708
2099 0.0090585761221448
3162 0.00601326732459897
};
\addlegendentry{BCRB-Asymptotic}
\end{axis}

\end{tikzpicture}}
    \caption{Euclidean CRB}
    \label{fig:euclidCRB 40}  
    \end{subfigure}
    \begin{subfigure}[b]{0.49\textwidth}
	\centering
	\scalebox{0.78}{
\begin{tikzpicture}

\definecolor{crimson2143940}{RGB}{214,39,40}
\definecolor{darkgray176}{RGB}{176,176,176}
\definecolor{darkorange25512714}{RGB}{255,127,14}
\definecolor{forestgreen4416044}{RGB}{44,160,44}
\definecolor{lightgray204}{RGB}{204,204,204}
\definecolor{steelblue31119180}{RGB}{31,119,180}

\begin{axis}[
legend cell align={left},
legend style={fill opacity=0.8, draw opacity=1, text opacity=1, draw=lightgray204},
log basis x={10},
log basis y={10},
tick align=outside,
tick pos=left,
title={Bayesian Intrinsic CRB. Parameters: \(\displaystyle p=5\), \(\displaystyle \nu=40\)},
x grid style={darkgray176},
xlabel={\(\displaystyle n\)},
xmin=7.49897501664337, xmax=4216.57625606459,
xmode=log,
xtick style={color=black},
xtick={0.1,1,10,100,1000,10000,100000},
xticklabels={
  \(\displaystyle {10^{-1}}\),
  \(\displaystyle {10^{0}}\),
  \(\displaystyle {10^{1}}\),
  \(\displaystyle {10^{2}}\),
  \(\displaystyle {10^{3}}\),
  \(\displaystyle {10^{4}}\),
  \(\displaystyle {10^{5}}\)
},
y grid style={darkgray176},
ylabel={MSE \(\displaystyle \left( \mathbf{\Sigma}, \hat{\mathbf{\Sigma}} \right)\)},
ymin=0.00582577082867704, ymax=3.33657795391498,
ymode=log,
ytick style={color=black},
ytick={0.0001,0.001,0.01,0.1,1,10,100},
yticklabels={
  \(\displaystyle {10^{-4}}\),
  \(\displaystyle {10^{-3}}\),
  \(\displaystyle {10^{-2}}\),
  \(\displaystyle {10^{-1}}\),
  \(\displaystyle {10^{0}}\),
  \(\displaystyle {10^{1}}\),
  \(\displaystyle {10^{2}}\)
}
]
\addplot [semithick, steelblue31119180, mark=*, mark size=3, mark options={solid}]
table {%
10 0.623048129618166
15 0.567493551917767
23 0.468031714777544
34 0.386440258850185
52 0.30598103130302
79 0.232027216772537
119 0.164179563592146
179 0.121547987761809
270 0.0830487543260053
408 0.0578731723218627
614 0.0385494287632879
925 0.0270143764903961
1394 0.0175756070769067
2099 0.0118690002844449
3162 0.00779996163067527
};
\addlegendentry{MAP}
\addplot [semithick, darkorange25512714, mark=square*, mark size=3, mark options={solid}]
table {%
10 0.530916408601205
15 0.483931636335165
23 0.411202356907676
34 0.344971505506028
52 0.2799157504194
79 0.214708966967417
119 0.155619556050866
179 0.116648102479801
270 0.0810886859242845
408 0.0569194655334584
614 0.0381053116956061
925 0.0268296131743916
1394 0.017587845741783
2099 0.0118543059301078
3162 0.00777525540460993
};
\addlegendentry{MMSE}
\addplot [semithick, forestgreen4416044, mark=asterisk, mark size=3, mark options={solid}]
table {%
10 0.504966154875219
15 0.457585013055797
23 0.397942694407041
34 0.337570656330562
52 0.270575521294457
79 0.208655066667263
119 0.15595682320141
179 0.113207996892705
270 0.0800318886724213
408 0.0554464682790147
614 0.0380330905411882
925 0.0258083884229743
1394 0.0173858651768473
2099 0.0116650745874889
3162 0.00779716823188569
};
\addlegendentry{BICRB}
\addplot [semithick, crimson2143940]
table {%
10 2.5
15 1.66666666666667
23 1.08695652173913
34 0.735294117647059
52 0.480769230769231
79 0.316455696202532
119 0.210084033613445
179 0.139664804469274
270 0.0925925925925926
408 0.0612745098039216
614 0.0407166123778502
925 0.027027027027027
1394 0.0179340028694405
2099 0.0119104335397808
3162 0.00790638836179633
};
\addlegendentry{BICRB-Asymptotic}
\end{axis}

\end{tikzpicture}}
    \caption{Intrinsic CRB}
    \label{fig:NaturCRB 40}  
    \end{subfigure}
    \caption{Euclidean Bayesian Cramér-Rao bound and MSE (left) and intrinsic Bayesian Cramér-Rao bound and expectation of the natural distance w.r.t. $n$. The data size is $p=5$ and the number of degrees of freedom of the $\mathcal{IW}$ prior is $\nu=40$.}
\end{figure}

\begin{figure}[h]
	\centering
	\begin{subfigure}[b]{0.49\textwidth}
	\centering
	\scalebox{0.78}{
\begin{tikzpicture}

\definecolor{crimson2143940}{RGB}{214,39,40}
\definecolor{darkgray176}{RGB}{176,176,176}
\definecolor{darkorange25512714}{RGB}{255,127,14}
\definecolor{forestgreen4416044}{RGB}{44,160,44}
\definecolor{lightgray204}{RGB}{204,204,204}
\definecolor{steelblue31119180}{RGB}{31,119,180}

\begin{axis}[
legend cell align={left},
legend style={fill opacity=0.8, draw opacity=1, text opacity=1, draw=lightgray204},
log basis x={10},
log basis y={10},
tick align=outside,
tick pos=left,
title={Bayesian Euclidean CRB. Parameters: \(\displaystyle p=5\), \(\displaystyle \nu=100\)},
x grid style={darkgray176},
xlabel={\(\displaystyle n\)},
xmin=7.49897501664337, xmax=4216.57625606459,
xmode=log,
xtick style={color=black},
xtick={0.1,1,10,100,1000,10000,100000},
xticklabels={
  \(\displaystyle {10^{-1}}\),
  \(\displaystyle {10^{0}}\),
  \(\displaystyle {10^{1}}\),
  \(\displaystyle {10^{2}}\),
  \(\displaystyle {10^{3}}\),
  \(\displaystyle {10^{4}}\),
  \(\displaystyle {10^{5}}\)
},
y grid style={darkgray176},
ylabel={MSE \(\displaystyle \left( \mathbf{\Sigma}, \hat{\mathbf{\Sigma}} \right)\)},
ymin=0.00515488393696443, ymax=3.00514101343337,
ymode=log,
ytick style={color=black},
ytick={0.0001,0.001,0.01,0.1,1,10,100},
yticklabels={
  \(\displaystyle {10^{-4}}\),
  \(\displaystyle {10^{-3}}\),
  \(\displaystyle {10^{-2}}\),
  \(\displaystyle {10^{-1}}\),
  \(\displaystyle {10^{0}}\),
  \(\displaystyle {10^{1}}\),
  \(\displaystyle {10^{2}}\)
}
]
\addplot [semithick, steelblue31119180, mark=*, mark size=3, mark options={solid}]
table {%
10 0.273338402655649
15 0.261713070773781
23 0.240376044847161
34 0.22211738797246
52 0.198122349693748
79 0.167649749792183
119 0.139375075007213
179 0.0996850011314294
270 0.0726467177962272
408 0.0522834660786607
614 0.0362519412034416
925 0.0244698218245175
1394 0.0170599077322346
2099 0.0117121274292034
3162 0.00782692050008656
};
\addlegendentry{MAP}
\addplot [semithick, darkorange25512714, mark=square*, mark size=3, mark options={solid}]
table {%
10 0.218902803904826
15 0.21065590081379
23 0.195228785922631
34 0.182920146710207
52 0.165984850520781
79 0.142484317707535
119 0.121999796820617
179 0.0899032499597853
270 0.0677613072777615
408 0.0495189925740192
614 0.0350568567329667
925 0.0241919153256901
1394 0.0170242658401078
2099 0.0116830579333461
3162 0.00781121743986558
};
\addlegendentry{MMSE}
\addplot [semithick, forestgreen4416044, mark=asterisk, mark size=3, mark options={solid}]
table {%
10 0.194754326165918
15 0.186672386655143
23 0.175049931499484
34 0.161246254512756
52 0.142818255306595
79 0.121918986049432
119 0.100197829044096
179 0.0790683222394613
270 0.0599082997611641
408 0.0438095611470119
614 0.0312672474545442
925 0.021831402525361
1394 0.0150034197951689
2099 0.0102054425954898
3162 0.0068854142134261
};
\addlegendentry{BCRB}
\addplot [semithick, crimson2143940]
table {%
10 2.2498505766369
15 1.4999003844246
23 0.97819590288561
34 0.661720757834383
52 0.432663572430174
79 0.28479121223252
119 0.189063073666967
179 0.12568997634843
270 0.0833277991347002
408 0.0551433964861987
614 0.036642517534803
925 0.0243227089366152
1394 0.0161395306788874
2099 0.0107186783069886
3162 0.00711527696596111
};
\addlegendentry{BCRB-Asymptotic}
\end{axis}

\end{tikzpicture}}
    \caption{Euclidean CRB}
    \label{fig:euclidCRB 100}  
    \end{subfigure}
    \begin{subfigure}[b]{0.49\textwidth}
	\centering
	\scalebox{0.78}{
\begin{tikzpicture}

\definecolor{crimson2143940}{RGB}{214,39,40}
\definecolor{darkgray176}{RGB}{176,176,176}
\definecolor{darkorange25512714}{RGB}{255,127,14}
\definecolor{forestgreen4416044}{RGB}{44,160,44}
\definecolor{lightgray204}{RGB}{204,204,204}
\definecolor{steelblue31119180}{RGB}{31,119,180}

\begin{axis}[
legend cell align={left},
legend style={fill opacity=0.8, draw opacity=1, text opacity=1, draw=lightgray204},
log basis x={10},
log basis y={10},
tick align=outside,
tick pos=left,
title={Bayesian Intrinsic CRB. Parameters: \(\displaystyle p=5\), \(\displaystyle \nu=100\)},
x grid style={darkgray176},
xlabel={\(\displaystyle n\)},
xmin=7.49897501664337, xmax=4216.57625606459,
xmode=log,
xtick style={color=black},
xtick={0.1,1,10,100,1000,10000,100000},
xticklabels={
  \(\displaystyle {10^{-1}}\),
  \(\displaystyle {10^{0}}\),
  \(\displaystyle {10^{1}}\),
  \(\displaystyle {10^{2}}\),
  \(\displaystyle {10^{3}}\),
  \(\displaystyle {10^{4}}\),
  \(\displaystyle {10^{5}}\)
},
y grid style={darkgray176},
ylabel={MSE \(\displaystyle \left( \mathbf{\Sigma}, \hat{\mathbf{\Sigma}} \right)\)},
ymin=0.00573035414636566, ymax=3.33920280518225,
ymode=log,
ytick style={color=black},
ytick={0.0001,0.001,0.01,0.1,1,10,100},
yticklabels={
  \(\displaystyle {10^{-4}}\),
  \(\displaystyle {10^{-3}}\),
  \(\displaystyle {10^{-2}}\),
  \(\displaystyle {10^{-1}}\),
  \(\displaystyle {10^{0}}\),
  \(\displaystyle {10^{1}}\),
  \(\displaystyle {10^{2}}\)
}
]
\addplot [semithick, steelblue31119180, mark=*, mark size=3, mark options={solid}]
table {%
10 0.243396863264198
15 0.234770448989664
23 0.216813759720282
34 0.20190035740599
52 0.179215253577428
79 0.152166032597536
119 0.124458307060591
179 0.0925543723476403
270 0.0690377884268886
408 0.0493475582183671
614 0.035629667176261
925 0.0243446066491116
1394 0.016714081402282
2099 0.0115034022655472
3162 0.00774019691493689
};
\addlegendentry{MAP}
\addplot [semithick, darkorange25512714, mark=square*, mark size=3, mark options={solid}]
table {%
10 0.224539199655797
15 0.216219629585279
23 0.201147379599437
34 0.187915673704148
52 0.168581340633972
79 0.144127550855517
119 0.119414178254657
179 0.089811310406863
270 0.0676622823401337
408 0.0485372490975668
614 0.0352727220995529
925 0.0241951495127254
1394 0.016751942265298
2099 0.0114884338342866
3162 0.00771625184936977
};
\addlegendentry{MMSE}
\addplot [semithick, forestgreen4416044, mark=asterisk, mark size=3, mark options={solid}]
table {%
10 0.225866225689131
15 0.215967242004456
23 0.201824545983004
34 0.18516708027601
52 0.163158701216637
79 0.138504583004349
119 0.113207996892705
179 0.0888995754097199
270 0.0670906463210362
408 0.0489197831576024
614 0.0348475467530787
925 0.0243019451680333
1394 0.0166892150263719
2099 0.0113473318602785
3162 0.00765392585609278
};
\addlegendentry{BICRB}
\addplot [semithick, crimson2143940]
table {%
10 2.5
15 1.66666666666667
23 1.08695652173913
34 0.735294117647059
52 0.480769230769231
79 0.316455696202532
119 0.210084033613445
179 0.139664804469274
270 0.0925925925925926
408 0.0612745098039216
614 0.0407166123778502
925 0.027027027027027
1394 0.0179340028694405
2099 0.0119104335397808
3162 0.00790638836179633
};
\addlegendentry{BICRB-Asymptotic}
\end{axis}

\end{tikzpicture}}
    \caption{Intrinsic CRB}
    \label{fig:NaturCRB 100}  
    \end{subfigure}
    \caption{Euclidean Bayesian Cramér-Rao bound and MSE (left) and intrinsic Bayesian Cramér-Rao bound and expectation of the natural distance w.r.t. $n$. The data size is $p=5$ and the number of degrees of freedom of the $\mathcal{IW}$ prior is $\nu=100$.}
\end{figure}

\section{Conclusion}

In the context of Bayesian estimation, when the parameter to estimate lies in a manifold, we have proposed a new intrinsic Van Trees inequality between a covariance of the estimation error defined with geometric tools and an intrinsic Bayesian Fisher information. This derivation is made by using some assumptions on the manifold of interest and the prior distribution. We illustrated this result by considering the problem of covariance estimation when the data follow a Gaussian distribution and the prior distribution is an inverse Wishart. Numerical simulation leaded to interesting conclusions on the MAP and the MMSE estimators which seem to be asymptotic efficiency in the natural inequality which is not the case when the study is made with the Euclidean formalism.
  
\bibliographystyle{IEEEbib}
\bibliography{biblio}

\appendices

\section{Expectations \ref{eq:all_Ts} for Theorem \ref{thm:eucl_bcrb_iw} (Euclidean metric)}
\label{appen:proof_euclidian}

This proof is based on the computation of expectations of traces of functions of Wishart matrices. Main results have been derived in \cite{maiwald00}, however they do not cover the higher orders that we have to deal with here. The required results are obtained thanks to the following lemma:

\begin{lemma}
Let $\boldsymbol{S} \sim \mathcal{W}(K,\frac{1}{K}\boldsymbol{\Sigma})$ which follows a Wishart distribution of scale matrix $\boldsymbol{\Sigma}$ and degrees of freedom $K$, then we have the following expectations
\begin{equation}
\begin{array}{l}
\mathbb{E}[\tr(\boldsymbol{A}\boldsymbol{S}\boldsymbol{B}\boldsymbol{S})\tr(\boldsymbol{C}\boldsymbol{S})] = \tr(\boldsymbol{A}\boldsymbol{\Sigma}\boldsymbol{B}\boldsymbol{\Sigma})\tr(\boldsymbol{C}\boldsymbol{\Sigma}) \\ +\frac{1}{K}(\tr(\boldsymbol{A}\boldsymbol{\Sigma}\boldsymbol{B}\boldsymbol{\Sigma}\boldsymbol{C}\boldsymbol{\Sigma})+\tr(\boldsymbol{A}\boldsymbol{\Sigma}\boldsymbol{C}\boldsymbol{\Sigma}\boldsymbol{B}\boldsymbol{\Sigma})+\tr(\boldsymbol{A}\boldsymbol{\Sigma})\tr(\boldsymbol{B}\boldsymbol{\Sigma})\tr(\boldsymbol{C}\boldsymbol{\Sigma}) ) \\
+\frac{1}{K^2} (\tr(\boldsymbol{A}\boldsymbol{\Sigma}\boldsymbol{C}\boldsymbol{\Sigma})\tr(\boldsymbol{B}\boldsymbol{\Sigma})+\tr(\boldsymbol{B}\boldsymbol{\Sigma}\boldsymbol{C}\boldsymbol{\Sigma})\tr(\boldsymbol{A}\boldsymbol{\Sigma})),
\end{array}
\end{equation}
and 
\begin{equation}
\begin{array}{l}
\mathbb{E}[\tr(\boldsymbol{A}\boldsymbol{S}\boldsymbol{B}\boldsymbol{S})\tr(\boldsymbol{C}\boldsymbol{S}\mathbf{D}\boldsymbol{S})] = \tr(\boldsymbol{A}\boldsymbol{\Sigma}\boldsymbol{B}\boldsymbol{\Sigma})\tr(\boldsymbol{C}\boldsymbol{\Sigma}\mathbf{D}\boldsymbol{\Sigma}) \\
+ \frac{1}{K}(\tr(\boldsymbol{A}\boldsymbol{\Sigma}\boldsymbol{B}\boldsymbol{\Sigma}\boldsymbol{C}\boldsymbol{\Sigma}\mathbf{D}\boldsymbol{\Sigma})+\tr(\boldsymbol{A}\boldsymbol{\Sigma}\boldsymbol{C}\boldsymbol{\Sigma}\mathbf{D}\boldsymbol{\Sigma}\boldsymbol{B}\boldsymbol{\Sigma})+\tr(\boldsymbol{A}\boldsymbol{\Sigma}\boldsymbol{B}\boldsymbol{\Sigma}\mathbf{D}\boldsymbol{\Sigma}\boldsymbol{C}\boldsymbol{\Sigma})\\
+\tr(\boldsymbol{A}\boldsymbol{\Sigma}\mathbf{D}\boldsymbol{\Sigma}\boldsymbol{C}\boldsymbol{\Sigma}\boldsymbol{B}\boldsymbol{\Sigma})+\tr(\boldsymbol{A}\boldsymbol{\Sigma}\boldsymbol{B}\boldsymbol{\Sigma})\tr(\boldsymbol{C}\boldsymbol{\Sigma})\tr(\mathbf{D}\boldsymbol{\Sigma})+\tr(\boldsymbol{C}\boldsymbol{\Sigma}\mathbf{D}\boldsymbol{\Sigma})\tr(\boldsymbol{A}\boldsymbol{\Sigma})\tr(\boldsymbol{B}\boldsymbol{\Sigma})) \\
\frac{1}{K^2}(\tr(\boldsymbol{A}\boldsymbol{\Sigma}\mathbf{D}\boldsymbol{\Sigma}\boldsymbol{B}\boldsymbol{\Sigma})\tr(\boldsymbol{C}\boldsymbol{\Sigma})+\tr(\boldsymbol{A}\boldsymbol{\Sigma}\boldsymbol{C}\boldsymbol{\Sigma}\boldsymbol{B}\boldsymbol{\Sigma})\tr(\mathbf{D}\boldsymbol{\Sigma})+\tr(\boldsymbol{A}\boldsymbol{\Sigma}\mathbf{D}\boldsymbol{\Sigma})\tr(\boldsymbol{B}\boldsymbol{\Sigma}\boldsymbol{C}\boldsymbol{\Sigma}) \\
+ \tr(\boldsymbol{A}\boldsymbol{\Sigma}\boldsymbol{B}\boldsymbol{\Sigma}\mathbf{D}\boldsymbol{\Sigma})\tr(\boldsymbol{C}\boldsymbol{\Sigma}) + \tr(\boldsymbol{A}\boldsymbol{\Sigma}\boldsymbol{B}\boldsymbol{\Sigma}\boldsymbol{C}\boldsymbol{\Sigma})\tr(\mathbf{D}\boldsymbol{\Sigma})+\tr(\boldsymbol{A}\boldsymbol{\Sigma}\boldsymbol{C}\boldsymbol{\Sigma})\tr(\boldsymbol{B}\boldsymbol{\Sigma}\mathbf{D}\boldsymbol{\Sigma}) \\
+\tr(\boldsymbol{A}\boldsymbol{\Sigma})\tr(\boldsymbol{B}\boldsymbol{\Sigma})\tr(\boldsymbol{C}\boldsymbol{\Sigma})\tr(\mathbf{D}\boldsymbol{\Sigma})+\tr(\boldsymbol{B}\boldsymbol{\Sigma}\boldsymbol{C}\boldsymbol{\Sigma}\mathbf{D}\boldsymbol{\Sigma})\tr(\boldsymbol{A}\boldsymbol{\Sigma})+\tr(\boldsymbol{A}\boldsymbol{\Sigma}\boldsymbol{C}\boldsymbol{\Sigma}\mathbf{D}\boldsymbol{\Sigma})\tr(\boldsymbol{B}\boldsymbol{\Sigma}) \\
+ \tr(\boldsymbol{A}\boldsymbol{\Sigma}\mathbf{D}\boldsymbol{\Sigma}\boldsymbol{C}\boldsymbol{\Sigma})\tr(\boldsymbol{B}\boldsymbol{\Sigma}) + \tr(\boldsymbol{B}\boldsymbol{\Sigma}\mathbf{D}\boldsymbol{\Sigma}\boldsymbol{C}\boldsymbol{\Sigma})\tr(\boldsymbol{A}\boldsymbol{\Sigma})) \\
+\frac{1}{K^3}(\tr(\boldsymbol{A}\boldsymbol{\Sigma}\mathbf{D}\boldsymbol{\Sigma}\boldsymbol{B}\boldsymbol{\Sigma}\boldsymbol{C}\boldsymbol{\Sigma})+\tr(\boldsymbol{A}\boldsymbol{\Sigma}\boldsymbol{C}\boldsymbol{\Sigma})\tr(\boldsymbol{B}\boldsymbol{\Sigma})\tr(\mathbf{D}\boldsymbol{\Sigma})+\tr(\boldsymbol{A}\boldsymbol{\Sigma}\mathbf{D}\boldsymbol{\Sigma})\tr(\boldsymbol{B}\boldsymbol{\Sigma})\tr(\boldsymbol{C}\boldsymbol{\Sigma}) \\
+\tr(\boldsymbol{B}\boldsymbol{\Sigma}\boldsymbol{C}\boldsymbol{\Sigma})\tr(\boldsymbol{A}\boldsymbol{\Sigma})\tr(\mathbf{D}\boldsymbol{\Sigma}) + \tr(\boldsymbol{B}\boldsymbol{\Sigma}\mathbf{D}\boldsymbol{\Sigma})\tr(\boldsymbol{A}\boldsymbol{\Sigma})\tr(\boldsymbol{C}\boldsymbol{\Sigma}) + \tr(\boldsymbol{A}\boldsymbol{\Sigma}\boldsymbol{C}\boldsymbol{\Sigma}\boldsymbol{B}\boldsymbol{\Sigma}\mathbf{D}\boldsymbol{\Sigma}))
\end{array} .
\end{equation}
\label{lemma}
\end{lemma}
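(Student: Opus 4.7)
The plan is to exploit the Gaussian representation of the Wishart distribution: writing $\boldsymbol{S} = \frac{1}{K}\sum_{k=1}^{K} \boldsymbol{x}_k\boldsymbol{x}_k^H$ with $\boldsymbol{x}_k$ i.i.d.\ $\mathcal{CN}(\mathbf{0},\boldsymbol{\Sigma})$, every polynomial in the entries of $\boldsymbol{S}$ becomes a polynomial in the entries of a collection of centered complex Gaussian vectors, whose expectation can be computed by Isserlis' (Wick's) theorem. The elementary building block is $\mathbb{E}[\boldsymbol{x}_k^H \boldsymbol{M} \boldsymbol{x}_l] = \delta_{kl}\tr(\boldsymbol{M}\boldsymbol{\Sigma})$, which extends to higher-order products as a sum over pair matchings of conjugated and non-conjugated factors.

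Concretely, for the first identity the Wishart expansion yields
\begin{equation*}
    \tr(\boldsymbol{A}\boldsymbol{S}\boldsymbol{B}\boldsymbol{S})\tr(\boldsymbol{C}\boldsymbol{S}) = \frac{1}{K^3} \sum_{k_1,k_2,k_3=1}^{K} (\boldsymbol{x}_{k_2}^H\boldsymbol{A}\boldsymbol{x}_{k_1})(\boldsymbol{x}_{k_1}^H\boldsymbol{B}\boldsymbol{x}_{k_2})(\boldsymbol{x}_{k_3}^H\boldsymbol{C}\boldsymbol{x}_{k_3}).
\end{equation*}
Isserlis' theorem expresses the expectation of each summand as a sum over six pairings between its three conjugated and three non-conjugated Gaussian factors; each pairing forces some equalities among the indices $k_1,k_2,k_3$ and produces a product of traces in $\boldsymbol{A}\boldsymbol{\Sigma}$, $\boldsymbol{B}\boldsymbol{\Sigma}$, $\boldsymbol{C}\boldsymbol{\Sigma}$. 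Regrouping the triples $(k_1,k_2,k_3)\in\{1,\dots,K\}^3$ by coincidence pattern and dividing by $K^3$, the cases of three, two, and one distinct indices respectively contribute at orders $K^0$, $K^{-1}$ and $K^{-2}$, matching the three lines of the claimed formula. Identifying the various trace monomials with the correct pairings (and using cyclicity of the trace to fold equivalent contributions together) yields the stated coefficients.

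The second expectation is handled analogously, starting from the quadruple sum
\begin{equation*}
    \tr(\boldsymbol{A}\boldsymbol{S}\boldsymbol{B}\boldsymbol{S})\tr(\boldsymbol{C}\boldsymbol{S}\boldsymbol{D}\boldsymbol{S}) = \frac{1}{K^4} \sum_{k_1,k_2,k_3,k_4=1}^{K} (\boldsymbol{x}_{k_2}^H\boldsymbol{A}\boldsymbol{x}_{k_1})(\boldsymbol{x}_{k_1}^H\boldsymbol{B}\boldsymbol{x}_{k_2})(\boldsymbol{x}_{k_4}^H\boldsymbol{C}\boldsymbol{x}_{k_3})(\boldsymbol{x}_{k_3}^H\boldsymbol{D}\boldsymbol{x}_{k_4}),
\end{equation*}
to which Isserlis' theorem associates $4!=24$ pairings per monomial, producing contributions at orders $K^0$ through $K^{-3}$. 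The main difficulty is purely combinatorial: each of the 24 pairings must be translated into the cycle structure of a directed graph on the four bracket factors (which dictates the resulting product of traces through cyclicity) and assigned the correct power of $K$ determined by the induced partition of indices. A practical way to organize this is to enumerate pairings by the cycle type of the induced permutation on the factor set $\{\boldsymbol{A},\boldsymbol{B},\boldsymbol{C},\boldsymbol{D}\}$, and to cross-check the result against the lower-order moments recalled in \cite{maiwald00} as well as against the specialization $\boldsymbol{A}=\boldsymbol{B}=\boldsymbol{C}=\boldsymbol{D}=\boldsymbol{I}$, which reduces everything to known spectral moments of the Wishart ensemble.
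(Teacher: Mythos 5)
Your strategy is sound and, for the first identity, I have checked that it reproduces the stated formula exactly: the six Wick pairings of the three conjugated against the three non-conjugated Gaussian factors give, after summing over the index tuples they leave free, precisely the one $\mathcal{O}(1)$ term, the three $\mathcal{O}(K^{-1})$ terms, and the two $\mathcal{O}(K^{-2})$ terms of the lemma. The paper itself omits the computation and merely points to the method of \cite{maiwald00}, namely expanding each trace as $\tr(\boldsymbol{A}_{(1)}\cdots\boldsymbol{A}_{(n)})=\sum_{i_1,\dots,i_n} a^{(1)}_{i_1 i_2}\cdots a^{(n)}_{i_n i_1}$ and invoking (higher-order) joint moments of the entries of a complex Wishart matrix. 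Your route is the same Wick calculus pushed one level down: instead of relying on tabulated entry-moments of $\boldsymbol{S}$, you return to the representation $\boldsymbol{S}=\frac{1}{K}\sum_k \boldsymbol{x}_k\boldsymbol{x}_k^H$ and apply Isserlis' theorem to the underlying circular Gaussian vectors. This is more self-contained (it needs only $\mathbb{E}[(\boldsymbol{x}_k)_a(\boldsymbol{x}_l^*)_b]=\delta_{kl}\Sigma_{ab}$ rather than fourth- and higher-order Wishart entry moments) and makes the bookkeeping of powers of $K$ transparent, since each pairing directly dictates which sample indices are identified. One small caution: organize the computation by summing over \emph{all} index tuples and all pairings (each pairing contributing $K^{c}$ with $c$ the number of free index classes it leaves), rather than by first partitioning tuples into coincidence patterns as your wording suggests — the latter introduces combinatorial factors like $K(K-1)(K-2)$ that must be re-expanded and recollected, whereas the former lands on the clean polynomial in $1/K$ of the lemma in one pass. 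With that organization, the second identity is the same argument with $4!=24$ pairings, as you say.
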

\begin{proof}
    The derivations are very long and are therefore omitted in this paper. Nevertheless, they can be retrieved by using the method of \cite{maiwald00} (in particular the relation $\tr(\boldsymbol{A}_{(1)} \hdots \boldsymbol{A}_{(n)})= \sum_{i_1,\hdots,i_n} a^{(1)}_{i_1i_2} \hdots a^{(n)}_{i_ni_1}$) but considering a higher order than in this reference.
\end{proof}

Let go back to the proof of Theorem \ref{thm:eucl_bcrb_iw}. We recall that we have to compute the following expectations:

\begin{itemize}
\item $T_1(i,j)=\mathbb{E}\left[\tr(\boldsymbol{\Sigma}^{-1}\boldsymbol{\Omega}_i^E)\tr(\boldsymbol{\Sigma}^{-1}\boldsymbol{\Omega}^E_j)\right]$,
\item $T_2(i,j)=\mathbb{E}\left[\tr(\boldsymbol{\Sigma}^{-1}\boldsymbol{\Omega}^E_i\boldsymbol{\Sigma}^{-1}\boldsymbol{\Sigma}_0)\tr(\boldsymbol{\Sigma}^{-1}\boldsymbol{\Omega}^E_j\boldsymbol{\Sigma}^{-1}\boldsymbol{\Sigma}_0)\right]$,
\item $T_3(i,j)=\mathbb{E}\left[\tr(\boldsymbol{\Sigma}^{-1}\boldsymbol{\Omega}^E_j)\tr(\boldsymbol{\Sigma}^{-1}\boldsymbol{\Omega}^E_i\boldsymbol{\Sigma}^{-1}\boldsymbol{\Sigma}_0)\right]$,
\item $T_4(i,j)=\mathbb{E}\left[\tr\left[\boldsymbol{\Sigma}^{-1} \boldsymbol{\Omega}^E_i\boldsymbol{\Sigma}^{-1}\boldsymbol{\Omega}^E_j\right]\right]$.
\end{itemize}
Since $\boldsymbol{\Sigma}\sim \mathcal{IW}(\nu,(\nu-p)\boldsymbol{\Sigma}_0)$, it is easy to notice that $\boldsymbol{\Sigma}^{-1}\sim \mathcal{W}(\nu,\frac{\boldsymbol{\Sigma}_0^{-1}}{\nu-p})$. Therefore, we can apply the results of Lemma \ref{lemma}, but taking into account the scale of the problem considered in our case. 

Let us introduce the changes of variables $\tilde{\boldsymbol{\Sigma}}^{-1}=\frac{1}{\nu}\boldsymbol{\Sigma}^{-1}$ and $\tilde{\boldsymbol{\Sigma}}^{-1}_0=\frac{1}{\nu-p}\boldsymbol{\Sigma}^{-1}_0$. In this case, we have $\tilde{\boldsymbol{\Sigma}}^{-1} \sim \mathcal{W}(\nu,\frac{\tilde{\boldsymbol{\Sigma}}^{-1}}{\nu})$. We are then in the conditions of the Lemma \ref{lemma} which can be applied to each term involving $\boldsymbol{\Sigma}^{-1}$ in \eqref{eq:all_Ts}, then properly re-scaled by $\nu/(\nu-p)$ to have the final expectations. 
We finally obtain for the 4 terms:
\begin{equation}
    \begin{array}{lll}
         T_1(i,j) & = &  \frac{1}{(\nu-p)^2}(\nu^2 A + \nu B)\\
         T_2(i,j) & = & \frac{1}{(\nu-p)^4}(\nu^4 A + \nu^3(4B+2pA) + \nu^2 (5A+5pB+p^2A) +\nu(3pA+2B+p^2B))\\
         T_3(i,j) &=& \frac{1}{(\nu-p)^3}(\nu^3 A + \nu^2(2B+pA) \nu (pB+A)) \\
         T_4(i,j) &=& \frac{1}{(\nu-p)^2}(\nu^2 B + \nu A) \\
    \end{array},
\end{equation}
where $A=\tr(\boldsymbol{\Sigma}_0^{-1}\boldsymbol{\Omega}_i^E)\tr(\boldsymbol{\Sigma}_0^{-1}\boldsymbol{\Omega}^E_j)$ and $B=\tr(\boldsymbol{\Sigma}_0^{-1}\boldsymbol{\Omega}_i^E\boldsymbol{\Sigma}_0^{-1}\boldsymbol{\Omega}^E_j)$. Finally, after basic manipulations, we have the following result:
\begin{equation}
    \left(\FisherMat^{\mathcal{E}}_{\textup{prior}}\right)_{i,j} = \alpha  \tr(\boldsymbol{\Sigma}_0^{-1}\boldsymbol{\Omega}_i^E\boldsymbol{\Sigma}_0^{-1}\boldsymbol{\Omega}^E_j) + \beta \tr(\boldsymbol{\Sigma}_0^{-1}\boldsymbol{\Omega}_i^E)\tr(\boldsymbol{\Sigma}_0^{-1}\boldsymbol{\Omega}^E_j),
\end{equation}
where $\alpha = \frac{\nu^3+p\nu^2+2\nu}{(\nu-p)^2}$ and $\beta = \frac{3\nu^2-p\nu}{(\nu-p)^2} $. 
By using again the term $T_4(i,j)$, the final result is factorized as
\begin{equation}
    \left(\FisherMat^\mathcal{E}_{\mathcal{IW}}\right)_{i,j} = \frac{n\nu^2}{(\nu-p)^2} \tr(\boldsymbol{\Sigma}_0^{-1}\boldsymbol{\Omega}_i^E\boldsymbol{\Sigma}_0^{-1}\boldsymbol{\Omega}^E_j)+ \frac{n\nu}{(\nu-p)^2} \tr(\boldsymbol{\Sigma}_0^{-1}\boldsymbol{\Omega}_i^E)\tr(\boldsymbol{\Sigma}_0^{-1}\boldsymbol{\Omega}^E_j) + \left(\FisherMat^{\mathcal{E}}_{\textup{prior}}\right)_{i,j}.
\end{equation}

\section{Expectations \ref{eq:all_Ts} for Theorem \ref{thm:riem_bcrb_iw} (Affine invariant metric)}
\label{appen:proof_affine}

This proof is decomposed in three steps. Firstly, we have to choose a basis which has been to be orthonormal w.r.t. the affine invariant metric \eqref{eq:AI metric}. Secondly, from this chosen basis, we reduce the terms $T_1$, $T_2$ and $T_3$ of \eqref{eq:all_Ts} (we will show that the computation of $T_4$ is not needed for this bound). Finally, the expectations for these reduced terms are computed.

\subsection{Choice of the basis}

In this section, we have to make a choice for the basis $\{\boldsymbol{\Omega}_i\}$. This basis has been to be orthonormal w.r.t. the natural inner product $n\tr\left[\boldsymbol{\Sigma}^{-1} \boldsymbol{\Omega}_i\boldsymbol{\Sigma}^{-1}\boldsymbol{\Omega}_j\right]$. As for the Euclidean proof, since $\boldsymbol{\Sigma}\sim\mathcal{IW}(\nu,(\nu-p)\boldsymbol{\Sigma}_{0})$, we have: 
\begin{equation}
\boldsymbol{\Sigma}^{-1}\sim\mathcal{W}(\nu,\frac{1}{\nu-p}\boldsymbol{\Sigma}_{0}^{-1}).
\end{equation}
Let us consider the Cholesky decomposition of $\boldsymbol{\Sigma}_{0}^{-1}$:
\begin{equation}
\boldsymbol{\Sigma}_{0}^{-1}=\boldsymbol{L}\boldsymbol{L}^{H},
\end{equation}
where $\boldsymbol{L}$ is a lower triangular matrix. We also use the Bartlett decomposition of $\boldsymbol{\Sigma}^{-1}$: 
\begin{equation}
\boldsymbol{\Sigma}^{-1}=\frac{1}{\nu-p}\boldsymbol{L}\boldsymbol{A}\boldsymbol{A}^{H}\boldsymbol{L}^{H},\label{eq:bartlett}
\end{equation}
where $\boldsymbol{A}$ is also a lower triangular matrix and where all
the elements are independent random variables: 
\begin{equation}
\begin{array}{lll}
a_{i,j} & \sim & \mathcal{CN}(0,1),\,i>j\\
a_{i,i}^{2} & \sim & \frac{1}{2}\chi_{2(\nu-i+1)}^{2}
\end{array}.
\end{equation}
Let us define a square root of $\boldsymbol{\Sigma}$: 
\begin{equation}
\boldsymbol{\Sigma}=\boldsymbol{H}\boldsymbol{H}^{H}.
\end{equation}
We know there exists an unitary matrix $\mathbf{U}\in\mathcal{U}_{p}$ such that\footnote{$\mathcal{U}_{p}$ is the set of unitary matrices of size $p \times p$}: 
\begin{equation}
\boldsymbol{H}=\boldsymbol{\Sigma}^{1/2}\mathbf{U}.
\end{equation}
Based on this definition of the square root, we choose to build the
natural basis from the Euclidean basis and $\boldsymbol{H}$: 
\begin{equation}
\boldsymbol{\Omega}_{i}=\boldsymbol{H}\boldsymbol{\Omega}_{i}^{E}\boldsymbol{H}^{H}.
\label{eq:defbasisnatural}
\end{equation}
Let us choose now a matrix $\boldsymbol{H}$. From the Bartlett decomposition
\eqref{eq:bartlett}: 
\begin{equation}
\boldsymbol{\Sigma}=(\nu-p)\boldsymbol{L}^{-H}\boldsymbol{A}^{-H}\boldsymbol{A}^{-1}\boldsymbol{L}^{-1}.
\end{equation}
Therefore, we choose 
\begin{equation}
\boldsymbol{H}=\sqrt{\nu-p}\boldsymbol{L}^{-H}\boldsymbol{A}^{-H}.
\label{eq:defH}
\end{equation}
It is easy to see that the basis defined in \eqref{eq:defbasisnatural} with $\boldsymbol{H}$ given as in \eqref{eq:defH} is orthonormal w.r.t. the natural inner product.

\subsection{Computation of the terms $T_1$, $T_2$ and $T_3$ of \eqref{eq:all_Ts}}

We recall that we have to compute the matrix 
\begin{equation}
\FisherMat^{AI}_{\mathcal{IW}} = \mathbb{E}_{\boldsymbol{\Sigma}} \left[\FisherMat_{\boldsymbol{\Sigma}}(\boldsymbol{\Sigma}) \right] +  \FisherMat^{AI}_{\textup{prior}},
\end{equation}
where $\FisherMat_{\boldsymbol{\Sigma}}(\boldsymbol{\Sigma})=n \mathbf{I}_p$ \cite{Breloy2019} and 
\begin{equation}
\left(\FisherMat^{AI}_{\textup{prior}}\right)_{i,j} =  \left[ (\nu+p)^2 T_1(i,j) +(\nu-p)^2 T_2(i,j) - (\nu^2-p^2)T_3(i,j) - (\nu^2-p^2)T_3(j,i)  \right].
\end{equation}
We notice, compared to the Euclidean bound, that we only have to compute $T_1$, $T_2$ and $T_3$ since $\FisherMat_{\boldsymbol{\Sigma}}(\boldsymbol{\Sigma})$ does not involve $\boldsymbol{\Sigma}$.

Let us start to compute the terms $\tr(\boldsymbol{\Sigma}^{-1}\boldsymbol{\Omega}_{i})$ and $\tr(\boldsymbol{\Sigma}^{-1}\boldsymbol{\Omega}_{i}\boldsymbol{\Sigma}^{-1}\boldsymbol{\Sigma}_{0})$ by using the basis chosen in the previous section. For the first term, we have:
\begin{equation}
\begin{array}{lll}
\tr(\boldsymbol{\Sigma}^{-1}\boldsymbol{\Omega}_{i}) & = & \tr(\boldsymbol{\Sigma}^{-1}\boldsymbol{H}\boldsymbol{\Omega}_{i}^{E}\boldsymbol{H}^{H})\\
 & = & \tr(\frac{1}{\nu-p}\boldsymbol{L}\boldsymbol{A}\boldsymbol{A}^{H}\boldsymbol{L}^{H}\boldsymbol{L}^{-H}\boldsymbol{A}^{-H}\sqrt{\nu-p}\boldsymbol{\Omega}_{i}^{E}\sqrt{\nu-p}\boldsymbol{A}^{-1}\boldsymbol{L}^{-1})\\
 & = & \tr(\boldsymbol{\Omega}_{i}^{E}).
\end{array}
\end{equation}
The second term is 
\begin{equation}
\begin{array}{lll}
\tr(\boldsymbol{\Sigma}^{-1}\boldsymbol{\Omega}_{i}\boldsymbol{\Sigma}^{-1}\boldsymbol{\Sigma}_{0}) & = & \tr((\nu-p-1)\boldsymbol{\Sigma}^{-1}\boldsymbol{L}^{-H}\boldsymbol{A}^{-H}\boldsymbol{\Omega}_{i}^{E}\boldsymbol{A}^{-1}\boldsymbol{L}^{-1}\boldsymbol{\Sigma}^{-1}\boldsymbol{\Sigma}_{0})\\
 & = & \tr(\frac{\nu-p}{\nu-p}\boldsymbol{\Sigma}^{-1}\boldsymbol{L}^{-H}\boldsymbol{A}^{-H}\boldsymbol{\Omega}_{i}^{E}\boldsymbol{A}^{-1}\boldsymbol{L}^{-1}\boldsymbol{\Sigma}^{-1}\boldsymbol{L}^{-H}\boldsymbol{L}^{-1})\\
 & = & \tr(\frac{1}{\nu-p}\boldsymbol{L}\boldsymbol{A}\boldsymbol{A}^{H}\boldsymbol{L}^{H}\boldsymbol{L}^{-H}\boldsymbol{A}^{-H}\boldsymbol{\Omega}_{i}^{E}\boldsymbol{A}^{-1}\boldsymbol{L}^{-1}\boldsymbol{L}\boldsymbol{A}\boldsymbol{A}^{H}\boldsymbol{L}^{H}\boldsymbol{L}^{-H}\boldsymbol{L}^{-1})\\
 & = & \frac{1}{\nu-p}\tr(\boldsymbol{A}\boldsymbol{\Omega}_{i}^{E}\boldsymbol{A}^{H}).
\end{array}
\end{equation}
Therefore we obtain:
\begin{equation}
    \begin{array}{lll}
    T_1(i,j) & = & \delta_{i,j}  \\
    T_2(i,j) & = &  \frac{1}{(\nu-p)^2}\mathbb{E}\left[\tr(\boldsymbol{A}\boldsymbol{\Omega}_{i}^{E}\boldsymbol{A}^{H})\tr(\boldsymbol{A}\boldsymbol{\Omega}_{j}^{E}\boldsymbol{A}^{H})\right] \\
    T_3(i,j) & = & \frac{1}{(\nu-p)^2}\tr(\boldsymbol{\Omega}_{i}^{E})\mathbb{E}\left[\tr(\boldsymbol{A}\boldsymbol{\Omega}_{j}^{E}\boldsymbol{A}^{H})\right]
    \end{array}.
\end{equation}
Finally the new formulation for $\FisherMat^{AI}_{\textup{prior}}$ could be reduced to:
\begin{equation}
\begin{array}{lll}
\left(\FisherMat^{AI}_{\textup{prior}}\right)_{i,j}& = & \left[ (\nu+p)^2 \tr(\boldsymbol{\Omega}_{i}^{E})\tr(\boldsymbol{\Omega}_{j}^{E}) +\mathbb{E}\left[\tr(\boldsymbol{A}\boldsymbol{\Omega}_{i}^{E}\boldsymbol{A}^{H})\tr(\boldsymbol{A}\boldsymbol{\Omega}_{j}^{E}\boldsymbol{A}^{H})\right] \right. \\
&& -(\nu+p)\tr(\boldsymbol{\Omega}_{i}^{E})\mathbb{E}\left[\tr(\boldsymbol{A}\boldsymbol{\Omega}_{j}^{E}\boldsymbol{A}^{H})\right]   \\
& & \left. -(\nu+p)\tr(\boldsymbol{\Omega}_{j}^{E})\mathbb{E}\left[\tr(\boldsymbol{A}\boldsymbol{\Omega}_{i}^{E}\boldsymbol{A}^{H})\right]  \right],
  \end{array}
  \label{eq:fiw_2_nat_comp}
\end{equation}
where the expectation is taken over $\boldsymbol{\Sigma}$. Therefore we have to compute:
\begin{equation}
   \begin{array}{l}
     \left(\boldsymbol{f}^{AI}_{\textup{prior}}\right)_{i}=  \mathbb{E}\left[\tr(\boldsymbol{A}\boldsymbol{\Omega}_{i}^{E}\boldsymbol{A}^{H})\right] \\
     \left(\bar{\FisherMat}^{AI}_{\textup{prior}}\right)_{i,j} = \mathbb{E}\left[\tr(\boldsymbol{A}\boldsymbol{\Omega}_{i}^{E}\boldsymbol{A}^{H})\tr(\boldsymbol{A}\boldsymbol{\Omega}_{j}^{E}\boldsymbol{A}^{H})\right]
\end{array} ,
\label{eq:def_f_F}
\end{equation}
where $\boldsymbol{A}$ is a lower triangular matrix and where all the elements are independent random variables : 
\begin{equation}
\begin{array}{lll}
a_{i,j} & \sim & \mathcal{N_{\mathbb{C}}}(0,1),\,i>j\\
a_{i,i}^{2} & \sim & \frac{1}{2}\chi_{2\left(\nu-i+1\right)}^{2}
\end{array}.
\end{equation}

\subsection{Computation of \eqref{eq:def_f_F}}


Instead of using the notation $\boldsymbol{\Omega}_{i}^{E}$ for the basis, we come back to $\boldsymbol{\Omega}_{ii}^{E}$, $\boldsymbol{\Omega}_{mn}^{E}$ and $\boldsymbol{\Omega}_{mn}^{h-E}$ which is recalled in the following:
\begin{itemize}
\item $\boldsymbol{\Omega}_{ii}^{E}$ is an $p$ by $p$ symmetric
matrix whose $i$th diagonal element is one, zeros elsewhere 
\item $\boldsymbol{\Omega}_{mn}^{E}$ is an $p$ by $p$ symmetric
matrix whose $mn$th and $nm$th elements are both $2^{-1/2}$, zeros
elsewhere. By convention, we use $m>n$.
\item $\boldsymbol{\Omega}_{mn}^{h-{E}}$ is an $p$ by $p$ Hermitian
matrix whose $mn$th element is $2^{-1/2}\sqrt{-1}$, and $nm$th
element is $-2^{-1/2}\sqrt{-1}$, zeros elsewhere ($m>n$).
\end{itemize}

For the calculation of $\boldsymbol{f}^{AI}_{\textup{prior}}$, we have 
\begin{equation}
\left(\boldsymbol{f}^{AI}_{\textup{prior}}\right)_{i}=\mathbb{E}[\tr(\boldsymbol{A}\boldsymbol{\Omega}_{i}^{E}\boldsymbol{A}^{H})]=\mathbb{E}[\tr(\boldsymbol{\Omega}_{i}^{E}\boldsymbol{A}^{H}\boldsymbol{A})]=\tr(\boldsymbol{\Omega}_{i}^{E}\mathbb{E}[\boldsymbol{A}^{H}\boldsymbol{A}]).
\end{equation}
First, we are looking for the $\left(i,j\right)-$element of the matrix $\boldsymbol{A}^{H}\boldsymbol{A}$. The $i^{th}$ row of the matrix $\boldsymbol{A}^{T}$ is the following vector 
\begin{equation}
\left.\boldsymbol{A}^{H}\right|_{i^{th}\,row}=\left(\underset{i-1\:elements}{\underbrace{\begin{array}{ccc}
0 & \cdots & 0\end{array}}}\underset{p-i+1\:elements}{\underbrace{\begin{array}{cccc}
a_{i,i}^{*} & a_{i+1,i}^{*} & \cdots & a_{p,i}^{*}\end{array}}}\right),
\end{equation}
and the $j^{th}$ column of the matrix $\boldsymbol{A}$ is the following vector 
\begin{equation}
\left.\boldsymbol{A}\right|_{j^{th}\,column}=\left(\underset{j-1\:elements}{\underbrace{\begin{array}{ccc}
0 & \cdots & 0\end{array}}}\underset{p-j+1\:elements}{\underbrace{\begin{array}{cccc}
a_{j,j} & a_{j+1,j} & \cdots & a_{p,j}\end{array}}}\right)^{T}.
\end{equation}

Then, if $i>j$ the $\left(i,j\right)-$element of the matrix $\boldsymbol{A}^{H}\boldsymbol{A}$
is given by 
\begin{equation}
\left.\boldsymbol{A}^{H}\boldsymbol{A}\right|_{i,j}=\left.\boldsymbol{A}^{H}\boldsymbol{A}\right|_{j,i}=a_{i,i}^{*}a_{i,j}+a_{i+1,i}^{*}a_{i+1,j}+\cdots+a_{p,i}^{*}a_{p,j},
\end{equation}
and $\mathbb{E}[\left.\boldsymbol{A}^{H}\boldsymbol{A}\right|_{i,j}]=0$ since all the random variables are independent and $\mathbb{E}[a_{i,j}]=0$ for $i\neq j$. Consequently, with the same reasoning when $j>i$ one can conclude that $\mathbb{E}[\boldsymbol{A}^{H}\boldsymbol{A}]$ is a diagonal matrix. If now $i=j$, then 
\begin{equation}
\left.\boldsymbol{A}^{H}\boldsymbol{A}\right|_{i,i}=\underset{p-i+1\,elements}{\underbrace{a_{i,i}^{2}+\left| a_{i+1,i}\right|^{2}+\cdots+\left| a_{p,i}\right| ^{2}}}.
\end{equation}
Since $a_{i,i}^{2}\sim\frac{1}{2}\chi_{2\left(\nu-i+1\right)}^{2}$ and $a_{i,j\neq i}\sim\mathcal{CN}\left(0,1\right)$, and the independence of all the random variables, one has $\mathbb{E}\left[a_{i,i}^{2}\right]=\nu-i+1$ (which appears once) and $\mathbb{E}\left[\left| a_{i,j\neq i}\right| ^{2}\right]=1$ (which appears $p-i$ times) and 
\begin{equation}
\mathbb{E}\left[\left.\boldsymbol{A}^{H}\boldsymbol{A}\right|_{i,i}\right]=\nu-i+1+p-i=\nu+p-2i+1.
\end{equation}
Now if $\boldsymbol{\Omega}_{i}^{E}:=\boldsymbol{\Omega}_{ii}^{E}$, one obtains $\left(\boldsymbol{f}^{AI}_{\textup{prior}}\right)_{i}=\tr(\boldsymbol{\Omega}_{ii}^{E}\mathbb{E}[\boldsymbol{A}^{H}\boldsymbol{A}])=\mathbb{E}\left[\left.\boldsymbol{A}^{H}\boldsymbol{A}\right|_{i,i}\right]=\nu+p-2i+1$
and, if $\boldsymbol{\Omega}_{i}^{E}:=\boldsymbol{\Omega}_{mn}^{E}$ (or equivalently $\boldsymbol{\Omega}_{mn}^{h-E}$),
then $\boldsymbol{\Omega}_{mn}^{E}\mathbb{E}[\boldsymbol{A}^{H}\boldsymbol{A}]$
is a matrix with zeros on its diagonal and its trace is then equal to zero. Consequently, 
\begin{equation}
\left(\boldsymbol{f}^{AI}_{\textup{prior}}\right)_{i}=\left\{ \begin{array}{c}
0\:if\:\boldsymbol{\Omega}_{i}^{E}:=\boldsymbol{\Omega}_{mn}^{E}\:or\:\boldsymbol{\Omega}_{mn}^{h-E} \\
\nu+p-2i+1\:if\,\boldsymbol{\Omega}_{i}^{E}:=\boldsymbol{\Omega}_{ii}^{E} 
\end{array}\right. .
\end{equation}

Now we are interested in the calculation of $\bar{\FisherMat}^{AI}_{\textup{prior}}$. Let's first analyze the term $\tr(\boldsymbol{\Omega}_{i}^{E}\boldsymbol{A}^{H}\boldsymbol{A})$ (since we know from the calculus of $\FisherMat^{AI}_{\textup{prior}}$ the structure of $\left.\boldsymbol{A}^{H}\boldsymbol{A}\right|_{i,j}$). 

If $\boldsymbol{\Omega}_{i}^{E}:=\boldsymbol{\Omega}_{ii}^{E}$ then 
\begin{equation}
\tr(\boldsymbol{\Omega}_{ii}^{E}\boldsymbol{A}^{H}\boldsymbol{A})=\left.\boldsymbol{A}^{H}\boldsymbol{A}\right|_{i,i}=\underset{p-i+1\,elements}{\underbrace{a_{i,i}^{2}+\left| a_{i+1,i}\right| ^{2}+\cdots+\left| a_{p,i}\right| ^{2}}}.
\end{equation}

If $\boldsymbol{\Omega}_{i}^{E}:=\boldsymbol{\Omega}_{mn}^{E}$ then 
\begin{equation}
\begin{array}{lll}
\tr(\boldsymbol{\Omega}_{mn}^{E}\boldsymbol{A}^{H}\boldsymbol{A})&=&\frac{1}{\sqrt{2}}\left(a_{m,m}a_{m,n}+a_{m+1,m}^{*}a_{m+1,n}+\cdots \right. \\
& & \left. +a_{p,m}^{*}a_{p,n}+a_{m,m}a_{m,n}^{*}+a_{m+1,m}a_{m+1,n}^{*}+\cdots+a_{p,m}a_{p,n}^{*}\right),
\end{array}
\end{equation}
where we have $2\left(p-i+1\right)$ elements. 

If $\boldsymbol{\Omega}_{i}^{E}:=\boldsymbol{\Omega}_{mn}^{h-{E}}$ then
\begin{equation}
\begin{array}{lll}
\tr(\boldsymbol{\Omega}_{mn}^{h-{E}}\boldsymbol{A}^{H}\boldsymbol{A})&=&\frac{\sqrt{-1}}{\sqrt{2}}\left(-a_{m,m}a_{m,n}-a_{m+1,m}^{*}a_{m+1,n}-\cdots \right. \\
& & \left. -a_{p,m}^{*}a_{p,n}+a_{m,m}a_{m,n}^{*}+a_{m+1,n}a_{m+1,n}^{*}+\cdots+a_{p,m}a_{p,n}^{*}\right).
\end{array}
\end{equation}

Second, let's study $\tr(\boldsymbol{\Omega}_{i}^{E}\boldsymbol{A}^{H}\boldsymbol{A})\tr(\boldsymbol{\Omega}_{j}^{E}\boldsymbol{A}^{H}\boldsymbol{A})$ and its expectation. Several cases appear:

\paragraph*{Case 1} If $\boldsymbol{\Omega}_{i}^{E}:=\boldsymbol{\Omega}_{j}^{E}:=\boldsymbol{\Omega}_{ii}^{E}$
\begin{equation}
\begin{array}{lll}
\left(\bar{\FisherMat}^{AI}_{\textup{prior}}\right)_{i,j} & = & \mathbb{E}[\tr^{2}(\boldsymbol{\Omega}_{ii}^{E}\boldsymbol{A}^{H}\boldsymbol{A})]\\
 & = & \mathbb{E}\left[\left(a_{i,i}^{2}+\left| a_{i+1,i}\right| ^{2}+\cdots+\left| a_{p,i}\right| ^{2}\right)^{2}\right]\\
 & = & \text{VAR}\left(a_{i,i}^{2}+\left| a_{i+1,i}\right| ^{2}+\cdots+\left| a_{p,i}\right| ^{2}\right)+\left(\mathbb{E}\left[a_{i,i}^{2}+\left| a_{i+1,i}\right| ^{2}+\cdots+\left| a_{p,i}\right| ^{2}\right]\right)^2\\
 & = & \left(\nu-i+1\right)+\left(\nu+p-2i+1\right)^{2}.
\end{array}
\end{equation}

\paragraph*{Case 2} If $\boldsymbol{\Omega}_{i}^{E}:=\boldsymbol{\Omega}_{ii}^{E}$
and $\boldsymbol{\Omega}_{j}^{E}:=\boldsymbol{\Omega}_{jj}^{E}$ with $i\neq j$ 
\begin{equation}
\begin{array}{lll}
\left(\bar{\FisherMat}^{AI}_{\textup{prior}}\right)_{i,j} & = & \mathbb{E}[\tr(\boldsymbol{\Omega}_{ii}^{E}\boldsymbol{A}^{H}\boldsymbol{A})\tr(\boldsymbol{\Omega}_{jj}^{E}\boldsymbol{A}^{H}\boldsymbol{A})]\\
 & = & \mathbb{E}\left[\left(\underset{p-i+1\,elements}{\underbrace{a_{i,i}^{2}+\left| a_{i+1,i}\right| ^{2}+\cdots+\left| a_{p,i}\right| ^{2}}}\right)\left(\underset{p-j+1\,elements}{\underbrace{a_{j,j}^{2}+\left| a_{j+1,j}\right| ^{2}+\cdots+\left| a_{p,j}\right| ^{2}}}\right)\right]\\
 & = & \left(\nu-2i+p+1\right)\left(\nu-2j+p+1\right).\\
\\
\end{array}
\end{equation}

\paragraph*{Case 3} If $\boldsymbol{\Omega}_{i}^{E}:=\boldsymbol{\Omega}_{ii}^{E}$
and $\boldsymbol{\Omega}_{j}^{E}:=\boldsymbol{\Omega}_{mn}^{E}$
(with $m>n$ but where possibly $m$ or $n$ can be equal to $i$)
\begin{equation}
\begin{array}{l}
\left(\bar{\FisherMat}^{AI}_{\textup{prior}}\right)_{i,j}=\frac{1}{\sqrt{2}}\mathbb{E}\left[\left(a_{i,i}^{2}+\left| a_{i+1,i}\right| ^{2}+\cdots+\left| a_{p,i}\right| ^{2}\right) \right. \\ \left. \left( a_{m,m}a_{m,n}+a_{m+1,m}^{*}a_{m+1,n}+\cdots+a_{p,m}^{*}a_{p,n}+a_{m,m}a_{m,n}^{*}+a_{m+1,m}a_{m+1,n}^{*}+\cdots+a_{p,m}a_{p,n}^{*}\right)\right].
\end{array}
\end{equation}
Each terms is the product of an expectation of up to 3 random variables. More precisely, $\left(\bar{\FisherMat}^{AI}_{\textup{prior}}\right)_{i,j}$ is made with terms that can only be in one of following situations: (i) $\mathbb{E}\left[a_{i,i}^{2}a_{m,m}a_{m,n}\right]=\mathbb{E}\left[a_{i,i}^{2}a_{m,m}\right]\mathbb{E}\left[a_{m,n}\right]=0$ since $m\neq n$ and $\mathbb{E}\left[a_{m,n}\right]=0$ (same thing when we have $\mathbb{E}\left[a_{i,i}^{2}a_{m,m}a_{m,n}^{*}\right]$). (ii) with $N=\left\{ 1,\ldots,p-m\right] $, $\mathbb{E}\left[a_{i,i}^{2}a_{m+N,m}^{*}a_{m+N,n}\right]=\mathbb{E}\left[a_{i,i}^{2}\right]\mathbb{E}\left[a_{m+N,m}^{*}\right]\mathbb{E}\left[a_{m+N,n}\right]=0$ since $m\neq n$ and $N>0$ one always has $a_{m+N,m}\neq a_{m+N,n}\neq a_{i,i}$
(same thing when we have $\mathbb{E}\left[a_{i,i}^{2}a_{m+N,m}a_{m+N,n}^{*}\right]$). (iii) with $Q=\left\{ 1,\ldots,p-m\right] $, $\mathbb{E}\left[a_{i+Q,i}^{2}a_{m,m}a_{m,n}\right]=0$ because, the random variables $a_{i+Q,i}^{2}$ and $a_{m,n}$ are the same (i.e. not independent) if and only if $m=i+Q$ and $n=i$ which leads to $\mathbb{E}\left[a_{i+Q,i}^{2}a_{m,m}a_{m,n}\right]=\mathbb{E}\left[a_{m,m}\right]\mathbb{E}\left[a_{m,n}^{3}\right]=0$ since $m\neq n$ and $\mathbb{E}\left[a_{m,n}^{3}\right]=0$ (recall that $a_{m,n}\sim\mathcal{CN}\left(0,1\right)$). Otherwise, $a_{i+Q,i}^{2}$
and $a_{m,n}$ are independent, then $\mathbb{E}\left[a_{i+Q,i}^{2}a_{m,m}a_{m,n}\right]=\mathbb{E}\left[a_{m,m}\right]\mathbb{E}\left[a_{i+Q,i}^{2}\right]\mathbb{E}\left[a_{m,n}\right]=0$ since $m\neq n$ and $\mathbb{E}\left[a_{m,n}\right]=0$. (iv) with $N=\left\{ 1,\ldots,p-m\right] $ and $Q=\left\{ 1,\ldots,p-m\right] $, $\mathbb{E}\left[\left| a_{i+Q,i}\right| ^{2}a_{m+N,m}^{*}a_{m+N,n}\right]=0$ because $a_{m+N,m}\bot a_{m+N,n}$ since $m>n$ and $N>0$ (same thing when we have $\mathbb{E}\left[\left| a_{i+Q,i}\right| ^{2}a_{m+N,m}a_{m+N,n}^{*}\right]$). Consequently, for the Case 3, one always has 
\begin{equation}
\left(\bar{\FisherMat}^{AI}_{\textup{prior}}\right)_{i,j}=0,
\end{equation}
and, trivially, the case $\boldsymbol{\Omega}_{j}^{E}:=\boldsymbol{\Omega}_{jj}^{E}$ and $\boldsymbol{\Omega}_{i}^{E}:=\boldsymbol{\Omega}_{mn}^{E}$ (with $>n$ but where possibly $m$ or $n$ can be equal to $j$) leads also to $\left(\bar{\FisherMat}^{AI}_{\textup{prior}}\right)_{i,j}=0$.

\paragraph*{Case 4} If $\boldsymbol{\Omega}_{i}^{E}:=\boldsymbol{\Omega}_{mn}^{E}$
(with $m>n$) and $\boldsymbol{\Omega}_{j}^{E}:=\boldsymbol{\Omega}_{kl}^{E}$
(with $k>l$) 
\begin{equation}
\begin{array}{lll}
\left(\bar{\FisherMat}^{AI}_{\textup{prior}}\right)_{i,j}&=&\frac{1}{2}\mathbb{E}\left[\left(a_{m,m}a_{m,n}+a_{m+1,n}^{*}a_{m+1,n}+\cdots \right. \right. \\
& & \left. \left. +a_{p,m}^{*}a_{p,n}+a_{m,m}a_{m,n}^{*} 
+a_{m+1,m}a_{m+1,n}^{*}+\cdots+a_{p,m}a_{p,n}^{*}\right) \right.\\ & & \left.\left( a_{k,k}a_{k,l}+a_{k+1,k}^{*}a_{k+1,l}+\cdots \right. \right. \\
& & \left. \left. +a_{p,k}^{*}a_{p,l}+a_{k,k}a_{k,l}^{*}+a_{k+1,k}a_{k+1,l}^{*}+\cdots+a_{p,k}a_{p,l}^{*}\right)\right].
\end{array}
\end{equation}

Concerning terms such that $\mathbb{E}[a_{m,m}a_{m,n}a_{k,k}a_{k,l}^{*}]$
(or $\mathbb{E}[a_{m,m}a_{m,n}^{*}a_{k,k}a_{k,l}]$), if $m=k$ and
$n=l$, one has $\mathbb{E}[a_{m,m}a_{m,n}a_{k,k}a_{k,l}^{*}]=\mathbb{E}[a_{m,m}^{2}]\mathbb{E}[\left| a_{m,n}\right| ^{2}]=\nu-m+1$
since $a_{m,m}^{2}\sim\chi_{\nu-m+1}^{2}$ and $\mathbb{E}[\left| a_{m,n}\right| ^{2}]=1$.
Again if $m=k$ and $n=l$ one also have terms such that $\mathbb{E}[a_{m,m}a_{m,n}^{*}a_{k,k}a_{k,l}^{*}]=\mathbb{E}[a_{m,m}a_{m,n}a_{k,k}a_{k,l}]=0$
since $\mathbb{E}[a_{m,n}^{2}]=0$. Otherwise, $\mathbb{E}[a_{m,m}a_{m,n}a_{k,k}a_{k,l}^{*}]=0$
and $\mathbb{E}[a_{m,m}a_{m,n}^{*}a_{k,k}a_{k,l}]=0$ because $a_{m,n}\bot a_{k,l}$
and $\mathbb{E}\left[a_{k,l}\right]=\mathbb{E}\left[a_{m,n}\right]=0$.
Concerning terms such that $\mathbb{E}[a_{m,m}a_{m,n}a_{k+N,k}^{*}a_{k+N,l}]$,
$\mathbb{E}[a_{m,m}a_{m,n}a_{k+N,k}a_{k+N,l}^{*}]$, $\mathbb{E}[a_{m+Q,m}^{*}a_{m+Q,n}a_{k,k}a_{k,l}]$
and $\mathbb{E}[a_{m+Q,m}a_{m+Q,n}^{*}a_{k,k}a_{k,l}]$, they are
always equal to $0$ since $a_{k+N,k}\bot a_{k+N,l}$ and $a_{m+Q,m}\bot a_{m+Q,n}$.

Finally, $\mathbb{E}[a_{m+Q,m}^{*}a_{m+Q,n}a_{k+N,k}a_{k+N,l}^{*}]=\mathbb{E}[a_{m+Q,m}a_{m+Q,n}^{*}a_{k+N,k}^{*}a_{k+N,l}]=\mathbb{E}[\left| a_{m+Q,m}\right| ^{2}\left| a_{m+Q,n}\right| ^{2}]=1$ if and only if $m=k$, $n=l$ and $P=N$ (otherwise one always have at least on zero mean random variable independent other the three others, or the fact that $\mathbb{E}[a_{m,n}^{2}]=0$, and these terms are equal to $0$). Consequently for case 4, if $\boldsymbol{\Omega}_{i}^{E}:=\boldsymbol{\Omega}_{mn}^{E}=\boldsymbol{\Omega}_{j}^{E}$
\begin{equation}
\left(\bar{\FisherMat}^{AI}_{\textup{prior}}\right)_{i,j}=\left(\nu-m+1+p-m\right)=\left(\nu-2m+p+1\right),
\end{equation}
and $\left(\bar{\FisherMat}^{AI}_{\textup{prior}}\right)_{i,j}=0$ otherwise.

\paragraph*{Case 5} If $\boldsymbol{\Omega}_{i}^{E}:=\boldsymbol{\Omega}_{ii}^{E}$
and $\boldsymbol{\Omega}_{j}^{E}:=\boldsymbol{\Omega}_{mn}^{h-{E}}$
(with $m>n$ but where possibly $m$ or $n$ can be equal to $i$)
\begin{equation}
\begin{array}{cc}
\left(\bar{\FisherMat}^{AI}_{\textup{prior}}\right)_{i,j}=\frac{\sqrt{-1}}{\sqrt{2}}\mathbb{E}\left[\left(a_{i,i}^{2}+\left| a_{i+1,i}\right| ^{2}+\cdots+\left| a_{p,i}\right| ^{2}\right)\right. \\ \left. \left( -a_{m,m}a_{m,n}-a_{m+1,m}^{*}a_{m+1,n}-\cdots-a_{p,m}^{*}a_{p,n}+a_{m,m}a_{m,n}^{*}+a_{m+1,n}a_{m+1,n}^{*}+\cdots+a_{p,m}a_{p,n}^{*}\right)\right].
\end{array}
\end{equation}
An analysis, similar to the one provided in case 3, and using the fact that $\forall N=\left\{ 1,\ldots,p-m\right] $, $\mathbb{E}\left[a_{m+N,m}^{*}a_{m+N,n}a_{m+N,m}^{*}\right]=\mathbb{E}\left[a_{m+N,m}^{*}a_{m+N,n}a_{m+N,n}\right]=0$ leads to
\begin{equation}
\left(\bar{\FisherMat}^{AI}_{\textup{prior}}\right)_{i,j}=0.
\end{equation}
Of course, the case $\boldsymbol{\Omega}_{j}^{E}:=\boldsymbol{\Omega}_{jj}^{E}$
and $\boldsymbol{\Omega}_{i}^{E}:=\boldsymbol{\Omega}_{mn}^{h-{E}}$
(with $m>n$ but where possibly $m$ or $n$ can be equal to $j$) leads also to $\left(\bar{\FisherMat}^{AI}_{\textup{prior}}\right)_{i,j}=0$.

\paragraph*{Case 6} If $\boldsymbol{\Omega}_{i}^{E}:=\boldsymbol{\Omega}_{mn}^{E}$
(with $m>n$) and $\boldsymbol{\Omega}_{j}^{E}:=\boldsymbol{\Omega}_{kl}^{h-{E}}$
(with $k>l$)
\begin{equation}
\begin{array}{lll}
\left(\bar{\FisherMat}^{AI}_{\textup{prior}}\right)_{i,j}&=&\frac{\sqrt{-1}}{2}\mathbb{E}\left[\left(a_{m,m}a_{m,n}+a_{m+1,n}^{*}a_{m+1,n}+\cdots \right. \right. \\
& & \left. \left. +a_{p,m}^{*}a_{p,n}+a_{m,m}a_{m,n}^{*}+a_{m+1,m}a_{m+1,n}^{*}+\cdots+a_{p,m}a_{p,n}^{*}\right) \right. \\ 
& & \left. \left( -a_{k,k}a_{k,l}-a_{k+1,k}^{*}a_{k+1,l}-\cdots \right. \right. \\
& & \left. \left. -a_{p,k}^{*}a_{p,l}+a_{k,k}a_{k,l}^{*}+a_{k+1,k}a_{k+1,l}^{*}+\cdots+a_{p,k}a_{p,l}^{*}\right)\right].
\end{array}
\end{equation}
Using all the aforementioned arguments, one find after calculus that
\begin{equation}
\left(\bar{\FisherMat}^{AI}_{\textup{prior}}\right)_{i,j}=0.
\end{equation}
Of course, the case $\boldsymbol{\Omega}_{i}^{E}:=\boldsymbol{\Omega}_{mn}^{h-{E}}$
(with $m>n$) and $\boldsymbol{\Omega}_{j}^{E}:=\boldsymbol{\Omega}_{kl}^{E}$
(with $k>l$) leads also to $\left(\bar{\FisherMat}^{AI}_{\textup{prior}}\right)_{i,j}=0$.

\paragraph*{Case 7} If $\boldsymbol{\Omega}_{i}^{E}:=\boldsymbol{\Omega}_{mn}^{h-{E}}$ (with $m>n$) and $\boldsymbol{\Omega}_{j}^{E}:=\boldsymbol{\Omega}_{kl}^{h-{E}}$ (with $k>l$)
\begin{equation}
\begin{array}{lll}
\left(\bar{\FisherMat}^{AI}_{\textup{prior}}\right)_{i,j}&=&-\frac{1}{2}\mathbb{E}\left[\left(-a_{m,m}a_{m,n}-a_{m+1,m}^{*}a_{m+1,n}-\cdots \right. \right. \\ & & \left. \left. -a_{p,m}^{*}a_{p,n}+a_{m,m}a_{m,n}^{*}+a_{m+1,m}a_{m+1,n}^{*}+\cdots+a_{p,m}a_{p,n}^{*}\right) \right. \\ & &  \left. \left( -a_{k,k}a_{k,l}-a_{k+1,k}^{*}a_{k+1,l}-\cdots \right. \right. \\ & & \left. \left. -a_{p,k}^{*}a_{p,l}+a_{k,k}a_{k,l}^{*}+a_{k+1,k}a_{k+1,l}^{*}+\cdots+a_{p,k}a_{p,l}^{*}\right)\right].
\end{array}
\end{equation}
An analysis, similar to the one provided in case 4, shows that $\left(\bar{\FisherMat}^{AI}_{\textup{prior}}\right)_{i,j}\neq 0$ if and only if $m=k$ and $n=l$. In this case, one obtains easily
\begin{equation}
\left(\bar{\FisherMat}^{AI}_{\textup{prior}}\right)_{i,j}=\nu-2m+p+1.
\end{equation}

\paragraph*{Final result} Then one can summary the value of $\left(\bar{\FisherMat}^{AI}_{\textup{prior}}\right)_{i,j}\neq 0$ as:
\begin{itemize}
\item If $\boldsymbol{\Omega}_{i}^{E}:=\boldsymbol{\Omega}_{ii}^{E}:=\boldsymbol{\Omega}_{j}^{E}$
then $\left(\bar{\FisherMat}^{AI}_{\textup{prior}}\right)_{i,j}=\left(\nu-i+1\right)+\left(\nu+p-2i+1\right)^{2}$.
\item If $\boldsymbol{\Omega}_{i}^{E}:=\boldsymbol{\Omega}_{ii}^{E}$ and $\boldsymbol{\Omega}_{j}^{E}:=\boldsymbol{\Omega}_{jj}^{E}$
with $i\neq j$ then $\left(\bar{\FisherMat}^{AI}_{\textup{prior}}\right)_{i,j}=\left(\nu-2i+p+1\right)\left(\nu-2j+p+1\right)$.
\item If $\boldsymbol{\Omega}_{i}^{E}:=\boldsymbol{\Omega}_{mn}^{E}=\boldsymbol{\Omega}_{j}^{E}$
then $\left(\bar{\FisherMat}^{AI}_{\textup{prior}}\right)_{i,j}=\nu-2m+p+1$.
\item If $\boldsymbol{\Omega}_{i}^{E}:=\boldsymbol{\Omega}_{mn}^{h-E}=\boldsymbol{\Omega}_{j}^{E}$
then $\left(\bar{\FisherMat}^{AI}_{\textup{prior}}\right)_{i,j}=\nu-2m+p+1$.
\end{itemize}

\end{document}